\newcommand{\norm}[1]{\left\|#1\right\|}
\algnewcommand\algorithmicinput{\textbf{INPUT: }}
\algnewcommand\Input{\item[\algorithmicinput]}
\algnewcommand\algorithmicoutput{\textbf{OUTPUT: }}
\algnewcommand\Output{\item[\algorithmicoutput]}
\newcommand\sForAll[2]{ \ForAll{#1}#2\EndFor} 
\newtheorem{assumption}{Assumption}
\newtheorem{lemma}{Lemma}
\newtheorem{corollary}{Corollary}
\newtheorem{theorem}{Theorem}
\begin{document}
	

\begin{frontmatter}
\title{Bayesian Sparse learning with preconditioned stochastic gradient MCMC and its applications }
\author[a]{Yating Wang}
\author[a]{Wei Deng}
\author[b]{Guang Lin\corref{cor1}}
\cortext[cor1]{Corresponding author}
\ead {guanglin@purdue.edu}
\address[a]{Department of Mathematics, Purdue University, West Lafayette, IN 47907, USA}
\address[b]{Department of Mathematics, School of Mechanical Engineering, Department of Statistics (Courtesy), Department of Earth, Atmospheric, and Planetary Sciences (Courtesy), Purdue University, West Lafayette, IN 47907, USA}
\begin{abstract}
Deep neural networks have been successfully employed in an extensive variety of research areas, including solving partial differential equations. Despite its significant success, there are some challenges in effectively training DNN, such as avoiding over-fitting in over-parameterized DNNs and accelerating the optimization in DNNs with pathological curvature. 
 In this work, we propose a Bayesian type sparse deep learning algorithm. The algorithm utilizes a set of spike-and-slab priors for the parameters in the deep neural network. The hierarchical Bayesian mixture will be trained using an adaptive empirical method. That is, one will alternatively sample from the posterior using preconditioned stochastic gradient Langevin Dynamics (PSGLD), and optimize the latent variables via stochastic approximation. The sparsity of the network is achieved while optimizing the hyperparameters with adaptive searching and penalizing. A popular SG-MCMC approach is Stochastic gradient Langevin dynamics (SGLD). However, considering the complex geometry in the model parameter space in non-convex learning, updating parameters using a universal step size in each component as in SGLD may cause slow mixing. To address this issue, we apply a computationally manageable preconditioner in the updating rule, which provides a step-size parameter to adapt to local geometric properties. Moreover, by smoothly optimizing the hyperparameter in the preconditioning matrix, our proposed algorithm ensures a decreasing bias, which is introduced by ignoring the correction term in preconditioned SGLD. According to the existing theoretical framework, we show that the proposed algorithm can asymptotically converge to the correct distribution with a controllable bias under mild conditions. Numerical tests are performed on both synthetic regression problems and learning the solutions of elliptic PDE, which demonstrate the accuracy and efficiency of present work. 

\end{abstract}
\begin{keyword}
	Bayesian sparse learning\sep 
	preconditioned stochastic gradient MCMC\sep
	deep learning\sep
	deep neural network\sep
	adaptive hierarchical posterior\sep
	stochastic approximation
\end{keyword}

\end{frontmatter}

\section{Introduction}

Deep neural networks have attracted extensive attention in recent times. Due to their powerful potential in approximating high-dimensional nonlinear maps, and universal approximation property to represent a rich class of functions, DNNs have been successfully employed in problems from various research areas. However, effectively training DNN is still challenging due to the difficulty of escaping local minima in non-convex optimization and avoiding overfitting in over-parameterized networks. 

Bayesian learning is appealing because of its ability to capture uncertainties in the model parameter, and MCMC sampling helps to address the overfitting issue. There has been extensive work bringing the Bayesian methods to the context of DNN optimization. The stochastic gradient Langevin dynamics (SGLD) \cite{sgld} is first proposed and becomes a popular approach in the family of stochastic gradient MCMC algorithms\cite{chen2014stochastic, ma2015complete, PSGLD}. SGLD is the first-order Euler discretization of Langevin diffusion with stationary distribution on Euclidian space. It can be viewed as adding some noise to a standard stochastic gradient optimization algorithm. Since it resembles SGD, SGLD inherits the advantage of SGD where the gradients are stochastically approximated using mini-batches. This makes MCMC scalable and provides a seamless transition between stochastic optimization and posterior sampling. It was shown that samples from SGLD will converge to samples from the true posterior distribution with annealed step size \cite{sg-mcmc-convergence, sgld}. 

In DNN, the underlying models may have complicated geometric properties and possess non-isotropic target density functions \cite{dauphin2014identifying,PSGLD, chen2014stochastic}. When the components of parameters have different curvature, generating samples using a universal step size for every model parameter may cause slow mixing and can be inefficient. In the optimization literature, there are many approaches to accelerate the gradient descent, such as preconditioning and Newton's method \cite{dauphin2015equilibrated, zhang2011quasi, byrd2016stochastic, bordes2009sgd}. However, naively borrowing this idea and using a preconditioning matrix in SGLD fails to produce a proper MCMC scheme, the Markov chain does not target the underlying posterior except for a few cases\cite{PSGLD, simsekli2016stochastic}. Considering that a Langevin diffusion with invariant measure can be directly defined on a Riemannian manifold, and the expected Fisher information is one typical choice for the Riemannian metric tensor \cite{Riemann_LD_HMC}, SGRLD is proposed \cite{SGRLD}. Built-up from Riemannian Langevin dynamics, SGRLD is a discretization of the Riemannian Langevin dynamics and the gradients are approximated stochastically. It incorporates local curvature information in parameter updating scheme, such that constant step size is adequate along with all directions.  However, the full expected Fisher information is usually intractable. A more computationally efficient preconditioner is needed to approximate second-order Hessian information. Preconditioned SGLD adopts the same preconditioner as introduced in RMSprop \cite{tieleman2012lecture} as discussed in \cite{PSGLD} which reduces the computational and storage cost. One can update the preconditioner sequentially taking into account the current gradient and previous preconditioning matrix. The preconditioner is in a diagonal form and can handle scale differences in the target density. However, the algorithm in \cite{PSGLD} introduces a permanent bias on the MSE due to ignoring a correction term in the updating equation. 

On the other hand, DNN models are usually over parameterized and require extensive storage capacity as well as a lot of computational power. The over specified models may also lead to bad generalization and large prediction variance. Enforcing sparsity in the network is necessary. In \cite{sgld-sa}, the authors propose an adaptive empirical Bayesian method for sparse learning. The idea is to incorporate an adaptive empirical Bayesian model selection techniques with SG-MCMC sampling algorithm (SGLD-SA). In SGLD-SA algorithm\cite{sgld-sa}, one adopts a spike-and-slab prior and obtains a Bayesian mixture DNN model. The model parameters are sampled from the adaptive hierarchical posterior using SG-MCMC, and the hyperparameters in the priors are optimized via stochastic approximation adaptively. The algorithm automatically searches and penalizes the low probability parameters, and identifies promising sparse high posterior probability models \cite{EMVS}. One can also apply a pruning strategy to cut off model parameters with small magnitudes to further enforce sparsity in the network \cite{molchanov2016pruning, lin2017runtime}. The performance of the sparse approach is demonstrated with numerous examples, and the method is also shown to be robust in adversarial attacks. Theoretically, the authors show that the proposed algorithm can asymptotically converge to the correct distribution.

In support of the advantages and considering the issues of the above-mentioned methods, we incorporate the preconditioned SGLD methods with sparse learning. We will apply the proposed method to learn solutions of partial differential equations with heterogeneous coefficients. Numerous approaches have been proposed to numerically solve ODEs and PDEs with deep neural networks, for example, parametric PDE \cite{Ying_paraPDE}, ODE systems driven by data \cite{NeuralODE, QinXiu2018dataODE}, time-dependent multiscale problems \cite{wang2018NLMCDNN, wang2020reduced} and physical informed DNN (\cite{PINN1, PINN2, zhang2019quantifying, PCDL_nz}). Moreover, various types of network architectures are constructed to achieve efficient learning based on existing fast numerical solvers. These approaches include designing multigrid neural networks \cite{Fan2018MNNH, he2019mgnet}, constructing multiscale models \cite{wang2018Gmsfem, wang2018NLMCDNN, wang_multiphase}, learning surrogate reduced-order models by deep convolution networks \cite{deepconv_Zabaras, E_deepRitz, cheung2020deep} and so on. 
 
This work attempts to design an efficient sparse deep learning algorithm, and apply it to learn the solutions elliptic PDE with heterogeneous coefficients. Numerical simulations for these problems are challenging since it naturally contains heterogeneities from various scales as well as uncertainties. Based on model reduction idea, for example, generalized multiscale finite element methods (GMsFEM) \cite{GMsFEM13, MixedGMsFEM, OnlineStokes}, the authors \cite{wang_multiphase} design appropriate sparse DNN structure to learn the map from the heterogeneous permeability to velocity fields in Darcy's flow. The idea is to apply locally connected/convolutional layers which can be an analogy to the upscaling and downscaling procedures in multiscale methods. However, the network is still over parameterized. In particular, the last decoding step joins neurons representing features on the coarser level to the neurons representing the fine-scale solutions and is realized by a fully connected layer. Due to the large degrees of freedom in the fine grid solution, the number of parameters in the network will be very large and result in inefficient training.
Our main contribution is to bring together preconditioned SGLD and stochastic approximation to achieve efficient and sparse learning. We propose an adaptive empirical Bayesian algorithm, where the neural network parameters are sampled from a Bayesian mixture model using PSGLD method, and the latent variables are smoothly optimized during stochastic approximation. PSGLD incorporates local curvature information in parameter updating scheme, thus it is suitable to deal with our problem which possesses multiscale nature. More importantly, we will sequentially update the preconditioning matrix under the framework of stochastic approximation, such that the bias introduced by ignoring the correction term in the sampling approaches to zero asymptotically. We theoretically show the convergence of the proposed algorithm and demonstrate its performance in several numerical experiments.

The paper is organized as follows. In Section \ref{sec:sgld}, we review some basic ideas in SGLD, SGRLD. In Section \ref{sec:sgld-sa}, the sparse adaptive empirical Bayesian approach is reviewed. Our main algorithm which combines preconditioned SGLD with sparse learning is explored in Section \ref{sec:psgld-sa}. Its convergence is discussed in Section \ref{sec:analysis}. Applying the proposed method to a large-p-small-n regression problem, and to learn solutions of elliptic problems with heterogeneous coefficients, its performances are presented in Section \ref{sec:numerical}. A conclusion is made in the last Section \ref{sec:conclusion}.

\section{Stochastic gradient Langevin dynamics (SGLD) and stochastic gradient Riemann Langevin dynamics(SGRLD) } \label{sec:sgld}
Throughout the paper, we denote by $\boldsymbol \beta$ the model parameters with $p(\boldsymbol \beta)$ as a prior distribution, and $D = \{d_i \}_{i=1}^N$ the entire dataset, where $d_i = (x_i, y_i)$ is an input-output pair for the model. Let $p(d|\boldsymbol \beta)$ be the likelihood, the posterior is then $p(\boldsymbol \beta|D) \propto p(\boldsymbol \beta) \prod_{i=1}^{N} p(d_i|\boldsymbol \beta)$. SGLD combines the idea from stochastic gradient algorithms and posterior Bayesian sampling using Langevin dynamics. The loss gradient is approximated efficiently use mini-batches of data in SGLD, and the uncertainties in the model parameter can be captured through Bayesian learning to avoid overfitting. The model parameters update as follows:
\begin{equation*}
{\boldsymbol \beta}_{k+1} = {\boldsymbol \beta}_{k} + \epsilon_k \nabla_{\boldsymbol \beta} \tilde{L} ({\boldsymbol \beta}_{k} ) + \mathcal{N}(0, 2\epsilon_k \tau^{-1})
\end{equation*}
where for a subset of $n$ data points $d_k = \{d_{k1}, \cdots, d_{kn} \}$
\begin{equation*}
\nabla_{\boldsymbol \beta} \tilde{L} (\boldsymbol \beta ) = \nabla_{\boldsymbol \beta} \log p(\boldsymbol \beta) + \frac{N}{n} \sum_{i=1}^n \nabla_{\boldsymbol \beta} \log p(d_{ki}|\boldsymbol \beta)
\end{equation*} 
is the stochastic gradient computed using a mini-batch, which is used to approximate the true gradient $\nabla_{\boldsymbol \beta} {L} (\boldsymbol {\beta})$.

However, if the components of the model parameter $\boldsymbol \beta$ possess different scales, the invariant probability distribution for the Langevin equation is not isotropic, using standard Euclidian distance may lead to slow mixing. Stochastic Gradient Riemann Langevin Dynamics (SGRLD) \cite{SGRLD} is a generalization of SGLD on a Riemannian manifold. In this case, consider the probability models on a Riemann manifold with some metric tensor $G^{-1}(\boldsymbol \beta)$, the parameter updates can be guided using the geometric information of this manifold as follows:
\begin{equation} \label{eq:sgrld}
{\boldsymbol \beta}_{k+1} = {\boldsymbol \beta}_{k} + \epsilon_k  \left[ G(\boldsymbol \beta_k ) \nabla_{\boldsymbol \beta} \tilde{L} ({\boldsymbol \beta}_{k} ) + \Gamma({\boldsymbol \beta}_k)  \right] +  \mathcal{N}(0, 2\epsilon_k \tau^{-1}G(\boldsymbol \beta_k )  )
\end{equation}
where $\Gamma({\boldsymbol \beta}_k)$ is an additional drift term and $\Gamma_i({\boldsymbol \beta}_k) = \sum_j \frac{\partial G_{ij}(\boldsymbol \beta_k)  }{\partial \beta_j}$. The he expected Fisher information can be used as a natural metric tensor, however it is intractable in many cases. One can choose a more practical metric tensor and use it as a preconditioning matrix. 



\section{SGLD with stochastic approximation (SGLD-SA)}\label{sec:sgld-sa}

In order to achieve sparse learning in DNN, in \cite{sgld-sa}, the authors propose an adaptive empirical Bayesian method. It assumes that the weight parameters $\beta_{lj}$, the $j$-th neuron in the $l$-th layer, follows spike-and-slab Gaussian Laplace prior
\begin{equation*}
\pi (\beta_{lj} | \sigma^2, \gamma_{lj})  = (1-\gamma_{lj})  \mathcal{L}_p(0, \sigma v_0) + \gamma_{lj}  \mathcal{N}(0, \sigma^2 v_1) 
\end{equation*}
where $\gamma_{lj} \in \{0, 1\}$ are the latent binary variable selection indicators, $\mathcal{L}_p$ is the Laplace distribution, and $\mathcal{N}$ is the Normal distribution. The error variance $\sigma^2$ follows an inverse gamma prior $\pi(\sigma^2) = IG(\nu/2, \nu \lambda/2)$. The prior for $\gamma$ follows a Bernoulli distribution, $\pi(\gamma_l| \delta_l) = \delta_l^{|\gamma_l|} (1-\delta_l)^{p_l - |\gamma_l|}$, which incorporate uncertainty regarding which variables $\beta_{lj}$ need be included in the model. Here, $|\gamma_l| = \sum_{j}\gamma_{lj}$, and $\delta_l$ follows  $\pi(\delta_l) = \delta_l^{a-1} (1-\delta_l)^{b-1 }$ where $a$, $b$ are some positive constants.. 

Let $d^m$ be the $m$-th mini-batch of the dataset. The likelihood for a regression problem can be rewritten as 
\begin{equation*}
\pi (d^m| \boldsymbol \beta, \sigma^2) = \frac{1}{(2\pi \sigma^2)^{n/2}} \exp \big\{-\frac{  \sum \limits_{x^m_i \in d^m} (x^m_i -\mathcal{F}(x^m_i; \boldsymbol \beta)  )^2 }{ 2 \sigma^2 } \big\}
\end{equation*}
where $\mathcal{F}$ denotes a map describing the input-output relationship from $x_i^m$ to $y_i^m$.

Then, the posterior follows
\begin{equation} \label{eq:post_dist}
\pi (\boldsymbol \beta, \sigma^2, \delta, \gamma | d^m) \propto \pi (d^m| \boldsymbol \beta, \sigma^2)^{\frac{N}{n}}  \pi (\boldsymbol \beta | \sigma^2, \gamma)  \pi(\sigma^2| \gamma) \pi(\gamma| \delta) \pi(\delta)
\end{equation}

Now treat $\gamma$ as ``missing data". At iteration $k$, 
instead of sampling from true posterior with respect to the whole dataset $\mathcal{D}$, one needs to sample from $Q$ with respect to a mini-batch $\mathcal{B}$ 
\begin{equation*}
Q(\boldsymbol \beta, \sigma, \delta |\beta_k, \sigma_k, \delta_k) = \mathbb{E}_{\mathcal{B}} \left[ \mathbb{E}_{\mathcal{\gamma|\mathcal{D}}} [\log \pi (\beta, \sigma, \delta, \gamma)|\mathcal{B}]\right]  
\end{equation*}
and it can be separated as
\begin{equation*}
Q(\boldsymbol \beta, \sigma, \delta |\beta_k, \sigma_k, \delta_k) = Q_1 (\boldsymbol \beta, \sigma |\boldsymbol {\beta}_k, \sigma_k, \delta_k) + Q_2(\delta |\boldsymbol {\beta}_k, \sigma_k, \delta_k) + C
\end{equation*}
where
\begin{align*}
Q_1 (\boldsymbol \beta, \sigma |\boldsymbol {\beta}_k, \sigma_k, \delta_k) &= \frac{N}{n} \log \pi(d^m| \boldsymbol \beta) - \sum_
{l \in L_D} \sum_{j\in p_l} \frac{\beta_{lj}^2}{2 \sigma_0^2} - \frac{p+\nu+2}{2} \log(\sigma^2) - \\
&\sum_{l \in L_S} \sum_{j\in p_l} \big\{ \frac{ |\beta_{lj}| }{\sigma} E[  \frac{1}{v_0 (1-\gamma_{lj} )} ] +\frac{ \beta_{lj}^2 }{2\sigma^2} E[  \frac{1}{v_1  \gamma_{lj} } ]   \big\} -\frac{\nu \lambda}{2\sigma^2 } 
\end{align*}

\begin{equation*}
Q_2(\delta |\boldsymbol {\beta}_k, \sigma_k, \delta_k) = \sum_{l \in L_S} \sum_{j\in p_l} \log (\frac{\delta_l}{1 -\delta_l}) E[ \gamma_{lj}] + (a-1) \log(\delta_l)+(p_l+b-1) \log(1-\delta_l)
\end{equation*}
where $L_S$ denotes sparse layers, and $L_D$ denotes non-sparse layers.

 The adaptive empirical Bayesian algorithm samples $\boldsymbol \beta$ from $Q$ and iteratively optimize $Q$ with respect to $\sigma^2, \gamma, \delta$ via stochastic approximation as in Algorithm \ref{alg:SGLD-SA}.
\begin{algorithm} [!htb]
	\caption{SGLD-SA}\label{alg:SGLD-SA}
	\begin{algorithmic}[1] 
		\Input{Initialize $ \beta_1, \rho_1, \kappa_1, \delta_1, \sigma_1$. Given target sparse rate $s$, step size $\omega_k$ }
		\sForAll{ $k \gets 1: \#iterations$} {
		\State $\displaystyle{ \boldsymbol {\beta}_{k+1} \gets  \boldsymbol {\beta}_k +  \epsilon_k \nabla_{ \boldsymbol \beta} Q(\cdot|d_k ) + \mathcal{N}(0, 2\epsilon_k \tau^{-1})}$
		\State $\displaystyle{a_{lj} \gets \pi(\boldsymbol \beta_k^{lj} | \gamma_{lj}=1 )\delta_l^k }$, $\displaystyle{b_{lj} \gets \pi(\boldsymbol \beta_k^{lj}  | \gamma_{lj}=0)(1-\delta_l^k)}$
		\State $\displaystyle{\rho_{k+1} \gets (1-\omega_{k+1}) \rho_k +  \omega_{k+1} \frac{a}{a +b}}$
		\State $\displaystyle{\kappa_{k+1,0} \gets (1-\omega_{k+1}) \kappa_{k,0} +  \omega_{k+1} \frac{1- \rho_{k+1}}{v_0}}$
		\State $\displaystyle{\kappa_{k+1,1} \gets (1-\omega_{k+1}) \kappa_{k,1} +  \omega_{k+1} \frac{\rho_{k+1}}{v_1}}$
		\State $\displaystyle{\sigma_{k+1} \gets (1-\omega_{k+1}) \sigma_{k} +  \omega_{k+1} R}$ 
		\State $ \displaystyle{\delta_{k+1} \gets (1-\omega_{k+1} ) \delta_{k} +  \omega_{k+1} \frac{\sum_j \rho_{k+1}^j +a-1}{a+b+p-2} }$ 
		\If {Pruning} 
			\State Prune the last $s\%$ weights with smallest magnitude
			\State Increase the sparse rate	
		\EndIf
	    }
	\end{algorithmic}
\end{algorithm}
We note that the update formula of latent variables $\rho, \kappa, \delta, \sigma$ are motivated by EM approach to Bayesian variable selection (EMVS) \cite{rovckova2014emvs}. In Algorithm \ref{alg:SGLD-SA}, $\rho_{lj} = E[\gamma_{lj}]$, $\omega_k$ is the step size in updating latent variables, $ \kappa_{k,0} = E[  \frac{1}{v_0 (1-\gamma_{lj} )}]$ and $ \kappa_{k,1} = E[  \frac{1}{v_1  \gamma_{lj} }]$, $R$ is the positive root to the following quadratic formula:
\begin{align*}
&\big \{ N+\sum_{l\in L_s} p_l +\nu \big\} \sigma^2 + \big \{  ||\sum_{l\in L_s} \kappa_{k,0}^l \circ \beta_{k+1}^l ||_1  \big\} \sigma \\
&+  \big \{ \frac{N}{n} \sum_{x^m_i \in d^m} (y_i^m-\mathcal{F}(x_i^m; \boldsymbol \beta)  )^2 + ||\sum_{l\in L_s} \kappa_{k,1}^l \circ \beta_{k+1}^l ||_2^2 + \nu \lambda \big\}  = 0  
\end{align*}

where $\circ$ denotes the point-wise product, $||\cdot||_1$ and $||\cdot||_2$ are the vector $l_1$ and $l_2$ norm correspondingly.

\section{Preconditioned SGLD with stochastic approximation (PSGLD-SA)  } \label{sec:psgld-sa}

As seen in Section \ref{sec:sgld}, all model parameters $\boldsymbol \beta$ are updated using the same learning rate $\epsilon_k$, this may cause slow mixing if the loss function has very different scales in different directions, and a small enough learning rate is required to avoid divergence in the largest positive curvature direction. 

Here, we will introduce a preconditioning matrix $G(\boldsymbol \beta)$ to guide the updating directions during sampling. In gradient descent algorithms, the optimization can be improved using the second order information, i.e. the inverse of the Hessian matrix, as the preconditioning matrix. However,  it is too computationally expensive to store
and invert the full Hessian during the training.  An efficient approximation is to use the same preconditioner as in RMSprop \cite{tieleman2012lecture}. The idea is to scale the gradient using a moving average of its recent norm in each iteration, so that one can adapt the step size separately for each weight. By keep a moving average for each weight parameter from the previous step, one can control the changes among adjacent mini batches. We propose a sequentially updated preconditioner using the stochastic approximation idea as follows
\begin{align}\label{eq:preconditioner}
 G(\boldsymbol {\beta}_k) &=  diag^{-1} (\eta + \sqrt{V(\boldsymbol {\beta}_k)}) \\
 V(\boldsymbol {\beta}_k) &=  \alpha_k V(\boldsymbol {\beta}_{k-1}) + (1-\alpha_k)  g(\boldsymbol {\beta}_k) \circ g(\boldsymbol {\beta}_k)
\end{align}
where $\eta$ is a regularization constant, and $\alpha_k = (1-\omega_k)$, $g(\boldsymbol {\beta}_k) = \nabla_{\boldsymbol {\beta}} Q$. Importantly, we note that the weight parameter $\alpha_k$ is a sequence approaching $1$ as the time step $k$ increases, which is different from the constant $\alpha$ in \cite{PSGLD}. The change in the parameters will then be
\begin{equation}\label{eq:psgld-update}
   \displaystyle{\triangle \boldsymbol {\beta}_k = \epsilon_k \big(G(\boldsymbol {\beta}_k) g(\boldsymbol {\beta}_k)  + \Gamma({\boldsymbol \beta}_k) \big)   + \mathcal{N}(0, 2\epsilon_k \tau^{-1}G^{\frac{1}{2}}(\boldsymbol {\beta}_k) )} 
\end{equation}
where $\Gamma_i({\boldsymbol \beta}_k) = \sum_j \frac{\partial G_{ij}( \boldsymbol {\beta}_k)  }{\partial \beta_j}$.

 We note that in \cite{PSGLD}, $\Gamma({\boldsymbol \beta}_k)$ is ignored in practice, and $\alpha$ is a constant. This produces a permanent bias $\mathcal{O} \left( \frac{(1-\alpha)^2}{\alpha^3}  \right)$ on the MSE. To address this issue, we let $\alpha_k$ gradually approach $1$ during the adaptive optimization of the latent variables, then the bias mentioned before will decrease. 
 To be specific, we have 
\begin{align*}
\left| \sum_{k =1}^K \Gamma_i(\boldsymbol {\beta}_k ) \right|&= \left| \sum_{t =1}^T (1-\alpha_k) V^{-\frac{3}{2}}(\boldsymbol{ \beta}_k)  g(\boldsymbol{ \beta}_k) \frac{ \partial g(\boldsymbol{ \beta}_k)}{\partial \boldsymbol{ \beta} }   \right| \\
&=  \left| \sum_{k =1}^K   (1-\alpha_k) g(\boldsymbol {\beta}_k) { \left[ \alpha_{k-1} V(\boldsymbol {\beta}_{k-1}) + (1-\alpha_{k-1}) g(\boldsymbol {\beta}_{k-1})^2 \right] ^{-\frac{3}{2}}}   \frac{ \partial g(\boldsymbol{\beta}_k)}{\partial \boldsymbol{\beta} }   \right| \\
& \lesssim \left| \sum_{k =1}^K  (1-\alpha_k) \frac{g(\boldsymbol{\beta}_k)}{  \alpha_{k-1}^{\frac{3}{2}} V(\boldsymbol{\beta}_{k-1})^{\frac{3}{2}}   }  \frac{ \partial g({ \beta}_k)}{\partial \boldsymbol{\beta} }    \right| \\
& \lesssim \left| \sum_{k=1}^K  (1-\alpha_k) \frac{g(\boldsymbol{\beta}_k)}{ \alpha_1^{\frac{3}{2}}  V^{\frac{3}{2}}(\boldsymbol{ \beta}_{k-1})   }  \frac{ \partial g(\boldsymbol{\beta}_k)}{\partial { \boldsymbol\beta} }    \right| 
\end{align*} 

Then we have
\begin{equation}\label{eq:gamma_bound}
    \left| \sum_{k =1}^K  \Gamma_i(\boldsymbol{\beta}_k) \right| \lesssim M \left| \sum_{k=1}^K  \frac{(1-\alpha_k)}{  \alpha_1^{\frac{3}{2}}  }    \right|
\end{equation}
due to the assumption that the derivative of the gradients are bounded, $ |V^{-\frac{3}{2}}(\boldsymbol{ \beta}_{k-1}) g(\boldsymbol{\beta}_k)\frac{ \partial g(\boldsymbol{\beta}_k)}{\partial\boldsymbol { \beta} } | \leq M$ for some constant $M>0$.

Typically, let $\alpha_k$ be in the form of $\alpha_k = 1- c_1(c_2+k)^{-\gamma}$ for some $\gamma \in (0.5, 1]$, and constants $c_1, c_1$, we can see that the bias introduced $\sum_{k=1}^K \frac{(1-\alpha_k)^2}{ \alpha_1^{3}}  $ on the MSE will approach $0$ as $K \rightarrow \infty$.

Thus, our proposed adaptive preconditioned SGLD samples $\boldsymbol \beta$ and optimizes $\sigma^2, \gamma, \delta$ as in Algorithm\ref{alg:PSGLD-SA}.
\begin{algorithm} [!htb]
	\caption{PSGLD-SA}\label{alg:PSGLD-SA}
	\begin{algorithmic}[1] 
		\Input{Initialize $\boldsymbol{\beta}_1, \rho_1, \kappa_1, \delta_1, V_1$, let $\alpha_1 = 0.9, \eta =10^{-3}$}
		\sForAll{ $k \gets 1: \#iterations$} {
			\State $g(\boldsymbol{\beta}_{k}) \gets \nabla_{\boldsymbol \beta} Q(\cdot|d_k )$
			\If {$k == 1$}
				\State $V(\boldsymbol{\beta}_{k}) \gets g(\boldsymbol{\beta}_{k}) \circ g(\boldsymbol{\beta}_{k}) $
			\Else
				\State $V(\boldsymbol{\beta}_{k}) \gets (1-\alpha_k) V(\boldsymbol{\beta}_{k-1}) + \alpha_k g(\boldsymbol{\beta}_{k}) \circ g(\boldsymbol{\beta}_{k}) $
			\EndIf
			\State $G(\boldsymbol{\beta}_{k}) \gets diag^{-1} (\eta + \sqrt{V(\boldsymbol{\beta}_{k})}) $
			\State $\displaystyle{\boldsymbol{\beta}_{k+1} \gets \boldsymbol{\beta}_{k} + \epsilon_k \big(G(\boldsymbol{\beta}_{k}) g(\boldsymbol{\beta}_{k} ) \big)   + G^{\frac{1}{2}}(\boldsymbol{\beta}_{k}))\mathcal{N}(0, 2\epsilon_k \tau^{-1})}$
			\State Updating hyperparameters by running steps 3-11 in Algorithm \ref{alg:SGLD-SA}}
	\end{algorithmic}
\end{algorithm}

\section{Convergence results}\label{sec:analysis}
Now, we will discuss the weak convergence of our proposed algorithm PSGLD-SA. First, we will take a look at the hyperparameters.
 Denote by $\boldsymbol \theta$ all the hyperparameters $(\rho, \kappa, \sigma, \delta)$. The stochastic approximation attempts to get the optimal $\boldsymbol \theta_*$ based on the asymptotically target distribution $\pi(\boldsymbol{\beta},\boldsymbol \theta_*)$. Define $H(\boldsymbol \theta,\boldsymbol{\beta}) = g_{\boldsymbol \theta}(\boldsymbol{\beta})- \boldsymbol \theta$, where $g_{\boldsymbol {\theta}}(\boldsymbol{\beta})$ represents a function to obtain optimal $\boldsymbol {\theta}$ given current model parameters $\boldsymbol{\beta}$. Denote by its mean field function $h(\boldsymbol \theta) = \mathbb{E} [H(\boldsymbol \theta, \boldsymbol{\beta})]$. SA aims to solve the fixed point equation $\displaystyle{ \int g_{\boldsymbol \theta}(\boldsymbol{\beta}) \pi(\boldsymbol{\beta},\boldsymbol \theta) d \boldsymbol{\beta} = \boldsymbol \theta}$, which is to find the root $\boldsymbol \theta_*$ of the equation $h(\boldsymbol \theta) = 0$. 
 As described in Algorithm \ref{alg:PSGLD-SA}, in each iteration, we first sample $\boldsymbol \beta_{k+1}$ using precontioned SGLD based on $\boldsymbol \theta_k$, then update the latent variables  using 
 \begin{equation*}
     \boldsymbol \theta_{k+1} = \boldsymbol \theta_k + \omega_{k+1} H(\boldsymbol \theta_k,\boldsymbol{\beta_{k+1}}),
 \end{equation*}
 where the map $g$ is motivated by EMVS. However, we only use a small set of data of $n$ samples instead of the full set in the computation of obtaining optimal latent variables. This will result a bias $\Delta(n,\boldsymbol\theta_i, \boldsymbol\beta_{i+1})$ at each step. That is, we actually use $ \boldsymbol \theta_{k+1} = \boldsymbol \theta_k + \omega_{k+1} \tilde{H}(\boldsymbol \theta_k,\boldsymbol{\beta_{k+1}})$ with
\begin{equation}\label{eq:H_tilde}
\tilde{H}(\boldsymbol \beta, \boldsymbol \theta) =H(\boldsymbol \beta, \boldsymbol \theta)  + \Delta(n,\boldsymbol\theta_i, \boldsymbol\beta_{i+1}),
\end{equation}
and we assume $ \mathbb{E}||\Delta(n,\boldsymbol\theta_i, \boldsymbol\beta_{i+1})||^2 \leq C^2$ for some constant $C$.

 Following a similar proof in \cite{sgld-sa}, under suitable assumptions, the adaptive empirical Bayesian method for sparse approximation algorithm has the following convergence results. The details of the proof are in Appendix \ref{app:theta}.
 \begin{theorem}
For a sufficiently large $k_0$, there exists a constant $\lambda$ such that 
\begin{equation*}
\mathbb{E} \left[ ||\boldsymbol\theta_k - \boldsymbol\theta^* ||^2 \right] = \mathcal{O} (\lambda \omega_k + \sup_{i\geq k_0}\mathbb{E} ||\Delta(n,\boldsymbol\theta_i, \boldsymbol\beta_{i+1})||).
\end{equation*} 
 \end{theorem}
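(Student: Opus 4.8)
The plan is to adapt the standard Robbins--Monro convergence analysis for stochastic approximation with a biased oracle, following the template of \cite{sgld-sa}. Writing $\boldsymbol\theta_{k+1} = \boldsymbol\theta_k + \omega_{k+1}\tilde{H}(\boldsymbol\theta_k,\boldsymbol\beta_{k+1})$ and recalling $\tilde{H} = H + \Delta$, I would first decompose the one--step error. Setting $T_k = \mathbb{E}\|\boldsymbol\theta_k - \boldsymbol\theta^*\|^2$ and expanding the square,
\begin{equation*}
T_{k+1} = T_k + 2\omega_{k+1}\,\mathbb{E}\langle \boldsymbol\theta_k - \boldsymbol\theta^*,\, \tilde{H}(\boldsymbol\theta_k,\boldsymbol\beta_{k+1})\rangle + \omega_{k+1}^2\,\mathbb{E}\|\tilde{H}(\boldsymbol\theta_k,\boldsymbol\beta_{k+1})\|^2,
\end{equation*}
and then split the inner product into the mean--field part $\langle \boldsymbol\theta_k-\boldsymbol\theta^*, h(\boldsymbol\theta_k)\rangle$, a martingale--difference fluctuation $\langle\boldsymbol\theta_k-\boldsymbol\theta^*, H(\boldsymbol\theta_k,\boldsymbol\beta_{k+1}) - h(\boldsymbol\theta_k)\rangle$, and the bias term $\langle\boldsymbol\theta_k-\boldsymbol\theta^*,\Delta(n,\boldsymbol\theta_k,\boldsymbol\beta_{k+1})\rangle$. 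Under the stability/drift assumption on $h$ (there is $\phi>0$ with $\langle\boldsymbol\theta-\boldsymbol\theta^*, h(\boldsymbol\theta)\rangle \le -\phi\|\boldsymbol\theta-\boldsymbol\theta^*\|^2$, which comes from the properties of the EMVS map as in \cite{sgld-sa}) the mean--field term contributes $-2\phi\omega_{k+1}T_k$; the bias term is controlled by Cauchy--Schwarz and Young's inequality as $\tfrac{\phi}{2}\omega_{k+1}T_k + \tfrac{1}{2\phi}\omega_{k+1}\mathbb{E}\|\Delta\|^2$; and the quadratic term together with the fluctuation variance is $\mathcal{O}(\omega_{k+1}^2)$ using the uniform $L^2$ bounds $\mathbb{E}\|H\|^2 \le C_H^2$ and $\mathbb{E}\|\Delta\|^2\le C^2$.

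The second step is to handle the martingale--difference term. Here I would invoke the Poisson equation machinery: since $\boldsymbol\beta_{k+1}$ is drawn from a nonstationary kernel (PSGLD with the evolving preconditioner), $H(\boldsymbol\theta_k,\boldsymbol\beta_{k+1}) - h(\boldsymbol\theta_k)$ is not a clean martingale increment, so one introduces the solution $\mu_{\boldsymbol\theta}$ of the Poisson equation associated with the sampler's transition operator and performs an Abel summation to trade the correlated noise for terms involving $\|\mu_{\boldsymbol\theta_{k+1}} - \mu_{\boldsymbol\theta_k}\|$ and $\|\omega_{k+1}-\omega_k\|$. Provided the regularity conditions on the Poisson solution hold (Lipschitz in $\boldsymbol\theta$, bounded) — these are exactly the conditions transferred from \cite{sgld-sa} and hold here because the added preconditioner $G(\boldsymbol\beta_k)$ is uniformly bounded and bounded below by construction (the $\eta$--regularization in \eqref{eq:preconditioner}), and because the extra bias from dropping $\Gamma$ is bounded by \eqref{eq:gamma_bound} with $\sum(1-\alpha_k)$ controlled — these contributions are absorbed into the $\mathcal{O}(\omega_{k+1}^2)$ remainder and an $\mathcal{O}(\omega_{k+1}\sup_{i\ge k_0}\mathbb{E}\|\Delta\|)$ term. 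Collecting everything yields a recursion of the form
\begin{equation*}
T_{k+1} \le (1 - \phi\,\omega_{k+1})\,T_k + C_1\,\omega_{k+1}^2 + C_2\,\omega_{k+1}\,\sup_{i\ge k_0}\mathbb{E}\|\Delta(n,\boldsymbol\theta_i,\boldsymbol\beta_{i+1})\|.
\end{equation*}

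The final step is to solve this recursion. Iterating from $k_0$ and using $\prod_{j}(1-\phi\omega_j)$ estimates together with $\omega_k = \mathcal{O}(k^{-1})$-type summability ($\sum\omega_k = \infty$, $\sum\omega_k^2 < \infty$), the $C_1\omega_{k+1}^2$ terms sum to $\mathcal{O}(\omega_k)$ and the constant bias term propagates to $\mathcal{O}\big(\sup_{i\ge k_0}\mathbb{E}\|\Delta\|\big)$, giving $T_k = \mathcal{O}(\lambda\omega_k + \sup_{i\ge k_0}\mathbb{E}\|\Delta(n,\boldsymbol\theta_i,\boldsymbol\beta_{i+1})\|)$ for a suitable $\lambda$ depending on $\phi$, $C_1$. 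The main obstacle I anticipate is the martingale/Poisson--equation step: one must verify that, despite the time--varying preconditioner and the vanishing $\Gamma$ correction, the sampler's transition kernel still admits a well--behaved Poisson solution with uniform bounds and $\boldsymbol\theta$--Lipschitz dependence, so that the correlated--noise contributions genuinely collapse into lower--order terms rather than contaminating the leading $\mathcal{O}(\omega_k)$ rate. The bias term $\Delta$ arising from mini--batch estimation of the latent--variable updates must also be shown to enter only linearly (not be amplified), which the Young's inequality splitting above achieves as long as its second moment is uniformly bounded by $C^2$.
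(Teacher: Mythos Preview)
Your proposal is correct and follows essentially the same route as the paper: expand $\|\boldsymbol\theta_{k+1}-\boldsymbol\theta^*\|^2$, use the drift condition on $h$ for the mean-field piece, invoke the Poisson-equation solution $\mu_{\boldsymbol\theta}$ with Abel summation to control the correlated fluctuation, bound the bias $\Delta$ by Cauchy--Schwarz, and then close the resulting one-step recursion. The only cosmetic difference is that the paper packages the final step as a majorizing-sequence lemma (constructing $\psi_k = \lambda\omega_k + (2C_2/\delta)\sup_{i\ge k_0}\triangle_i$ and verifying it dominates the recursion), whereas you propose to iterate the inequality directly; also, the Poisson-equation regularity you flag as the main obstacle is simply \emph{assumed} in the paper (Assumption~\ref{assumption:poisson}) rather than verified for the preconditioned sampler.
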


Next, we present a weak convergence result of the model parameters. 
\begin{corollary}
Under Assumptions 2 in \cite{sg-mcmc-convergence}, the bias and MSE of PSGLD-SA for $K$ steps with decreasing step size $\epsilon_k$ is bounded,
the distribution of $\boldsymbol \beta_k$ converges weakly to the target posterior with a controllable bias, as $\epsilon_k\rightarrow 0$ and $k\rightarrow \infty$.
\end{corollary}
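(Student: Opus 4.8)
The plan is to reduce the statement to the general SG-MCMC convergence framework of \cite{sg-mcmc-convergence}, by viewing PSGLD-SA as a small, \emph{vanishing} perturbation of a preconditioned Langevin diffusion whose invariant law is the target posterior $\pi(\boldsymbol\beta,\boldsymbol\theta^*)$. Fix a sufficiently smooth test function $\phi$ and set $\bar\phi = \int \phi(\boldsymbol\beta)\,\pi(\boldsymbol\beta,\boldsymbol\theta^*)\,d\boldsymbol\beta$. Let $\mathcal{L}$ be the generator of the It\^o diffusion associated with \eqref{eq:psgld-update} but with the \emph{full} drift $\Gamma$, the \emph{exact} gradient, and the limiting preconditioner $G_\infty$ and hyperparameters $\boldsymbol\theta^*$, and let $\psi$ solve the Poisson equation $\mathcal{L}\psi = \phi - \bar\phi$. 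Assumption 2 of \cite{sg-mcmc-convergence} supplies exactly what is needed here: $\psi$ and its derivatives up to the relevant order are dominated by a Lyapunov function, and the iterates $\boldsymbol\beta_k$ have uniformly bounded moments along the trajectory.

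Next I would carry out the standard one-step Taylor expansion of $\psi(\boldsymbol\beta_{k+1})$ using the update in Algorithm \ref{alg:PSGLD-SA}. Taking conditional expectations gives $\mathbb{E}[\psi(\boldsymbol\beta_{k+1})\mid\mathcal{F}_k] = \psi(\boldsymbol\beta_k) + \epsilon_k \widetilde{\mathcal{L}}_k\psi(\boldsymbol\beta_k) + \mathcal{O}(\epsilon_k^2)$, where $\widetilde{\mathcal{L}}_k$ is the \emph{local} generator actually realized at step $k$: it uses the stochastic gradient $g(\boldsymbol\beta_k)$, the current preconditioner $G(\boldsymbol\beta_k)$, and \emph{omits} the correction $\Gamma(\boldsymbol\beta_k)$. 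Decompose $\widetilde{\mathcal{L}}_k = \mathcal{L} + \Delta\mathcal{L}_k$, where $\Delta\mathcal{L}_k$ absorbs four contributions: (i) the zero-mean mini-batch noise in $g(\boldsymbol\beta_k)$, handled exactly as in \cite{sg-mcmc-convergence}; (ii) the discrepancy $\|G(\boldsymbol\beta_k)-G_\infty\|$, which vanishes because $\alpha_k = 1-\omega_k\to1$ turns the recursion \eqref{eq:preconditioner} into a convergent weighted average of $g\circ g$; (iii) the hyperparameter gap, controlled in expectation by Theorem 1 through $\mathbb{E}\|\boldsymbol\theta_k-\boldsymbol\theta^*\|$; and (iv) the omitted drift $\Gamma(\boldsymbol\beta_k)$, whose accumulated magnitude is bounded in \eqref{eq:gamma_bound} by $M\sum_k (1-\alpha_k)\alpha_1^{-3/2}$.

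Summing the telescoped identity over $k=1,\dots,K$ and dividing by $S_K := \sum_{k=1}^K\epsilon_k$ yields
\[
\Bigl| \mathbb{E}\Bigl[\tfrac{1}{S_K}\textstyle\sum_{k=1}^K \epsilon_k\phi(\boldsymbol\beta_k)\Bigr] - \bar\phi \Bigr|
\;\le\; \frac{\mathbb{E}|\psi(\boldsymbol\beta_{K+1})| + |\psi(\boldsymbol\beta_1)|}{S_K}
\;+\; \frac{D\sum_{k=1}^K \epsilon_k^2}{S_K}
\;+\; \frac{1}{S_K}\sum_{k=1}^K \epsilon_k\, \mathbb{E}\bigl\|\Delta\mathcal{L}_k\psi(\boldsymbol\beta_k)\bigr\|,
\]
with $D$ a constant from the bounded higher-order terms. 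For a decreasing step size with $S_K\to\infty$ and $\sum\epsilon_k^2/S_K\to0$, the first two terms vanish; in the last term, (i) averages to zero while (ii)--(iv) are of order $\mathbb{E}\|\boldsymbol\theta_k-\boldsymbol\theta^*\| + (1-\alpha_k)$, which by Theorem 1 and the schedule $\alpha_k = 1-c_1(c_2+k)^{-\gamma}$ tends to a residual governed by $\sup_{i\ge k_0}\mathbb{E}\|\Delta(n,\boldsymbol\theta_i,\boldsymbol\beta_{i+1})\|$, i.e. a \emph{controllable} bias set by the mini-batch size $n$ used in the stochastic-approximation step. The MSE bound follows the same route, additionally using that the martingale-difference part has conditional second moment $\mathcal{O}(\epsilon_k^2)$, giving $\mathbb{E}(\tfrac{1}{S_K}\sum_k\epsilon_k\phi(\boldsymbol\beta_k)-\bar\phi)^2 = \mathcal{O}\bigl(\sum\epsilon_k^2/S_K^2 + 1/S_K + (\text{bias})^2\bigr)$, which is bounded for finite $K$ and shrinks to the squared controllable bias as $\epsilon_k\to0$ and $k\to\infty$; since $\phi$ is arbitrary in the test class this is the claimed weak convergence.

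I expect the main obstacle to be items (ii)--(iii): making rigorous that the \emph{coupled, time-inhomogeneous} operator $\widetilde{\mathcal{L}}_k$ is a genuinely vanishing perturbation of the fixed limiting generator $\mathcal{L}$. This is a two-timescale matter --- $\boldsymbol\theta_k$ and the preconditioner $G(\boldsymbol\beta_k)$ drift on the slower $\omega_k$ timescale while $\boldsymbol\beta_k$ moves on the $\epsilon_k$ timescale --- so one must check that $\omega_k$ decays fast enough relative to $\epsilon_k$ for the $\boldsymbol\beta$-chain to effectively see a frozen target, and that $\psi$ and its first few derivatives stay uniformly integrable along the trajectory despite the evolution of $G$ and $\boldsymbol\theta$. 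Once these regularity and timescale conditions are secured, the remaining steps are the bookkeeping above, essentially as in \cite{sgld-sa}.
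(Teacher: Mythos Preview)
Your overall framework matches the paper's: both invoke the SG-MCMC analysis of \cite{sg-mcmc-convergence}, solve the Poisson equation $\mathcal{L}\psi=\phi-\bar\phi$, telescope the one-step expansion, and split the perturbation into mini-batch noise, the SA bias $\mathcal{O}(k^{-\gamma}+\sup_i\mathbb{E}\|\Delta\|)$ supplied by Theorem~1, and the omitted $\Gamma$ controlled via \eqref{eq:gamma_bound}. The resulting bias and MSE bounds have the same shape.

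The substantive difference is your choice of reference generator. The paper takes $\mathcal{L}$ to be the Riemannian Langevin generator built with the \emph{current} state-dependent preconditioner $G(\boldsymbol\beta)$ together with its correction $\Gamma(\boldsymbol\beta)$; see \eqref{eq:langevin_manifold}--\eqref{eq:cont_generator}. The point is that \emph{any} positive-definite metric $G$ yields a Riemannian Langevin diffusion with the same invariant measure $\pi$, so the preconditioner never needs to be frozen and the perturbation $\Delta V_k$ consists only of the gradient error $G(\boldsymbol\beta_k)(\nabla L-\tilde g_k)$ and the omitted $\Gamma(\boldsymbol\beta_k)$---there is no ``preconditioner discrepancy'' term at all. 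Your item (ii) is therefore avoidable. By fixing a hypothetical $G_\infty$ you create an extra error source whose control is circular: the claim that $V(\boldsymbol\beta_k)\to V_\infty$ because $\alpha_k\to1$ would require an ergodic average of $g\circ g$ along the chain, which presupposes the weak convergence you are trying to establish. You rightly flag (ii) as the main obstacle, but the paper's device removes it outright.

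There is also an internal inconsistency in your decomposition: if the reference uses a constant $G_\infty$, then $\Gamma_\infty=\sum_j\partial(G_\infty)_{ij}/\partial\beta_j=0$, and since the algorithm also drops $\Gamma$, no discrepancy of type (iv) can arise. Item (iv) only makes sense when the reference carries the current $G(\boldsymbol\beta_k)$ and its $\Gamma(\boldsymbol\beta_k)$---which is exactly the paper's setup. Replacing $G_\infty$ by $G(\boldsymbol\beta_k)$ in $\mathcal{L}$ and deleting item (ii), your argument collapses to the paper's proof.
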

\begin{proof}
With geometric information for probability models, the Langevin diffusion on the manifold is described by
\begin{equation}\label{eq:langevin_manifold}
d \boldsymbol {\beta (t)} =    G(\boldsymbol {\beta } (t))  \nabla_{\boldsymbol \beta} {L} (\boldsymbol {\beta }  (t), \boldsymbol\theta^* ) + \Gamma(\boldsymbol {\beta } (t))  +  G^{\frac{1}{2}} (\boldsymbol {\beta } (t)) d \mathcal{B}_t
\end{equation}
where $\mathcal{B}_t$ is the Brownian motion.

Denote by $\mathcal{L}$ the generator for \eqref{eq:langevin_manifold}, then
\begin{equation}\label{eq:cont_generator}
\mathcal{L} =  \left[G(\boldsymbol {\beta_k} ) \nabla_{\boldsymbol \beta} {L} (\boldsymbol {\beta_k } ,\boldsymbol\theta^*  ) + \Gamma(\boldsymbol {\beta_k } )  \right]  \cdot \nabla_{\boldsymbol \beta}  + 2 G^{\frac{1}{2}}(\boldsymbol {\beta}) G^{\frac{1}{2}}(\boldsymbol {\beta_k })^T : \nabla_{\boldsymbol \beta_k} \nabla_{\boldsymbol \beta}^T
\end{equation}
The generator $\mathcal{L}$ is associated with the backward Kolmogorov equation 
\begin{equation*}
\mathbb{E} [\phi(\boldsymbol {\beta_k } )] = e^{t\mathcal{L}} \phi(\boldsymbol {\beta}_0 )
\end{equation*}

 In PSGLD-SA, one will sample from the adaptive hierarchical posterior using \eqref{eq:preconditioner} \eqref{eq:psgld-update}, and gradually optimize the latent variables through stochastic approximation.

Write the local generator of our proposed algorithm as
\begin{equation}
\tilde{\mathcal{L}}_k = \left[G(\boldsymbol {\beta_k} )  \tilde{g}_k \right]    \cdot \nabla_{\boldsymbol \beta}  + 2 G^{\frac{1}{2}}(\boldsymbol {\beta }) G^{\frac{1}{2}}(\boldsymbol {\beta_k })^T : \nabla_{\boldsymbol \beta_k} \nabla_{\boldsymbol \beta}^T
\end{equation}
where $\tilde{\mathcal{L}}_k  =  \mathcal{L} + \Delta V_k$, and 
\begin{equation*}
\Delta V_k = \left[ G(\boldsymbol {\beta_k} ) \left( \nabla_{\boldsymbol \beta} {L} (\boldsymbol {\beta_k},\boldsymbol \theta^*) -\tilde{g}_k \right) + \Gamma(\boldsymbol {\beta_k} ) \right] \cdot \nabla_{\boldsymbol \beta}.
\end{equation*}
Thus 
\begin{equation*}
\tilde{g}_k =  \nabla_{\boldsymbol \beta} {L} (\boldsymbol {\beta}_k  ) + \xi_k + \mathcal{O}(k^{-\gamma}+ \sup_{i\geq k_0}\mathbb{E} ||\Delta(n,\boldsymbol\theta_i, \boldsymbol\beta_{i+1})||)
\end{equation*}
where $\xi_k$ is a random vector denoting the difference between the true gradient and stochastic gradient, and $\mathcal{O}(k^{-\gamma}+\sup_{i\geq k_0}\mathbb{E} ||\Delta(n,\boldsymbol\theta_i, \boldsymbol\beta_{i+1})||)$ is the bias term generated by SA.

Given a test function $\phi$ of interest, let $\bar{\phi}$ be the posterior average of $\phi$ under the invariant measure of the SDE \eqref{eq:langevin_manifold}. Let $\boldsymbol {\beta}_{k}$ be the numerical samples, and define 
$\hat{\phi} = \sum_{k=1}^{K} \frac{\epsilon_k}{S_K} \phi(\boldsymbol {\beta}_{k} )$, where $S_K = \sum_{k=1}^{K} \epsilon_k$. Let $\psi$ be a functional which solves the Poisson equation 
\begin{equation*}
\mathcal{L} \psi(\boldsymbol {\beta}_{k}) = \phi(\boldsymbol {\beta}_{k}) - \bar{\phi}.
\end{equation*}

Following a similar proof as in \cite{sg-mcmc-convergence}, one can obtain the following results.
The bias of PSGLD-SA is
\begin{equation*}
   |\mathbb{E} \hat{\phi} - \bar{\phi} | \leq \frac{1}{S_K} |\mathbb{E}   \psi(\boldsymbol {\beta}_{K} ) - \psi(\boldsymbol {\beta}_{0} )| +  \sum_{k=1}^{K} \frac{\epsilon_k}{S_K} \mathbb{E} ||\Delta V_k \psi(\boldsymbol {\beta}_{k-1} ) || + C \sum_{k=1}^{K} \epsilon_k^2
\end{equation*}

Formally, we note that in the above bound for the bias, the term $\sum_{k=1}^{K} \frac{\epsilon_k}{S_K} \mathbb{E} ||\Delta V_k \psi(\boldsymbol {\beta}_{k-1} )||$ is important. It is related to the bias introduced by stochastic approximation and ignoring $\Gamma(\boldsymbol {\beta}_k)$.
By Assumptions 2 in \cite{sg-mcmc-convergence} on the smootheness and boundedness on the functional $\psi$, and the boundedness of the preconditioner, it is easy to see that the bias introduced by stochastic approximation can be decomposed into (1) the the term $\displaystyle{ \sum_{k=1}^K \frac{\epsilon_k k^{-\gamma}}{S_K}}$ in the bias, which approaches 0 as $K \rightarrow \infty$, and (2) $\displaystyle{ \sum_{k=1}^K \frac{\epsilon_k }{S_K}\sup_{i\geq k_0}\mathbb{E} ||\Delta(n,\boldsymbol\theta_i, \boldsymbol\beta_{i+1})||} = \sup_{i\geq k_0}\mathbb{E} ||\Delta(n,\boldsymbol\theta_i, \boldsymbol\beta_{i+1})|| $ which is a controllable bias. The bias introduced by ignoring $\Gamma(\boldsymbol {\beta}(t) )$ can be bounded by $\displaystyle{\sum_{k=1}^K  (1-\alpha_k) \alpha_1^{-\frac{3}{2}} = \mathcal{O}(\sum_{k=1}^K  k^{-\gamma}) }$ according \eqref{eq:gamma_bound}, which goes to 0 as $K \rightarrow \infty$. 

The MSE of PSGLD-SA can be bounded by
\begin{equation*}
     \mathbb{E} (\hat{\phi} - \bar{\phi})^2 \leq  C \left( \sum_{k=1}^{K} \frac{1}{S_K^2} +  \sum_{k=1}^{K} \frac{\epsilon_k^2}{S_K^2}  \mathbb{E} ||\Delta V_k \psi(\boldsymbol {\beta}_{k-1} )||^2 + \frac{(\sum_{k=1}^{K} \epsilon_k)^2}{S_K^2} \right) 
\end{equation*}
which converges as long as $\sup_k  \mathbb{E} ||\Delta V_k \psi(\boldsymbol {\beta}_{k-1} )||^2$ is bounded. 

Thus we conclude that, as $\epsilon_k\rightarrow 0$ and $k\rightarrow \infty$, the distribution of $\boldsymbol \beta_k$ converges weakly to the target posterior with a controllable bias. The bias is expected to decrease if we enlarge the mini-batch size to approximate the gradient.

\end{proof}

\section{Numerical examples}\label{sec:numerical}
\subsection{Small $n$ large $p$ problem} \label{sec:num_np}

We first test on a linear regression problem, where the model parameters $\boldsymbol\beta \in \mathbb{R}^p$, and predictors $X \in  \mathbb{R}^{n\times p}$. We take a dataset with $n = 100$ observations
and $p = 200$ predictors. $\beta_1 = 3, \beta_2 = 1, \beta_j = 0$, for $j=1, \cdots, p$. 

For the first test (section \ref{sec:num_np} test 1), we use $\mathcal{N}_p(0, \Sigma)$ with $\Sigma_{ij} = 0.6^{|i-j|}$ to simulate predictor values $X$. The responses $y = X \boldsymbol \beta + \epsilon$, and $\epsilon \sim \mathcal{N}_n(0, 3I_n)$. The hyperparameters used for SGLD-SA are: $v_0 = 10$, $v_1= 0.1$, $\delta = 0.5$, $b=p$, $a=1$, $\lambda=1$, $\nu=1$. The hyperparameters used for PSGLD-SA are: $v_0 = 100$, $v_1= 0.1$, $\delta = 0.5$, $b=p$, $a=1$, $\lambda=1$, $\nu=1$, $\alpha =0.999$. The learning rate is $\epsilon_k = 0.05\times k^{-\frac{1}{3}}$, and the step size to update latent variables is $\omega_k = 100\times (k+100)^{-0.7}$.
The performance of SGLD-SA, PSGLD and PSGLD-SA are compared and presented in Figure \ref{fig:example1}. It shows that both SGLD-SA and PSGLD-SA work similarly well for this setting. The variance in model parameters $\beta_1$ and $\beta_2$ are similar, and they can be quantified correctly in both settings. However, without stochastic approximation, vanilla PSGLD cannot capture the uncertainties propoerly. However, Figure \ref{fig:example1} (c)-(d) show that preconditioned methods converge a little bit faster according to the testing curves. 

In the second test (section \ref{sec:num_np} test 2), we change the predictors $X$ described in test 1 a little bit. That is, we multiply $0.3$ on the first column of the predictor ($X[,1]$), to create different scales in the predictor. The response values $y$ and true regression coefficients are set to be similar to before. In this case, the uncertainties in the posterior estimation $\beta_1$ and $\beta_2$ will have different scales. We also compare the performance of SGLD-SA PSGLD, and PSGLD-SA and present them in Figure \ref{fig:example2}. In this example, we see that PSGLD-SA outperforms PSGLD and SGLD-SA the standard approach obviously.


\begin{figure}[!ht] 
		\centering
	\begin{subfigure}{.3\textwidth}
		\centering
		\includegraphics[scale=0.25]{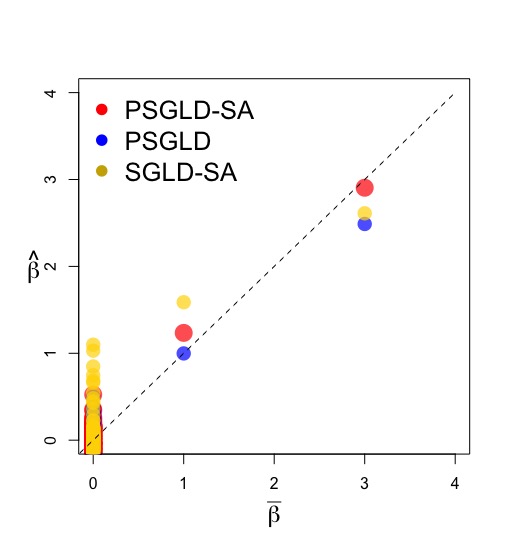}
		\caption{Posterior mean ($\bar{\boldsymbol \beta}$) vs true ($\hat{\boldsymbol \beta}$)}
	\end{subfigure}\;\;\;\;\;\;\;\;\;\;\;\;\;\;\;\;\;\;
	\begin{subfigure}{.3\textwidth}
	\centering
	\includegraphics[scale=0.25]{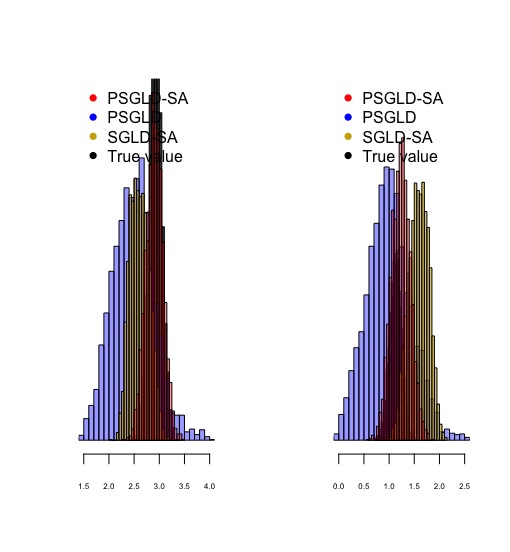}
	\caption{Posterior estimation of $\beta_1$ and $\beta_2$}
\end{subfigure}

\begin{subfigure}{.3\textwidth}
	\centering
	\includegraphics[scale=0.25]{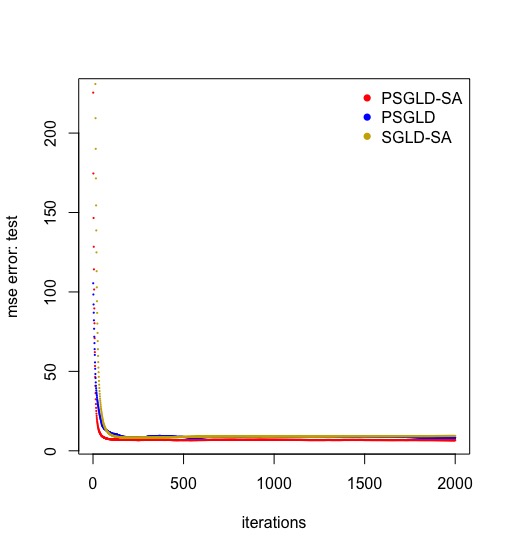}
	\caption{Testing MSE error history}
\end{subfigure}\;\;\;\;\;\;\;\;\;\;\;\;\;\;\;\;\;\;
\begin{subfigure}{.3\textwidth}
	\centering
	\includegraphics[scale=0.25]{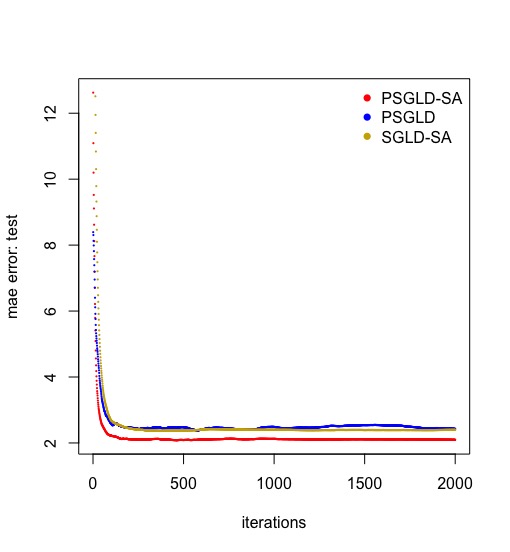}
	\caption{Testing MAE error history}
\end{subfigure}\hfill
\caption{Section \ref{sec:num_np} test 1. Large $p$ small $n$ regression for predictors with uniform scale.} \label{fig:example1}
\end{figure}


\begin{figure}[!ht]
		\centering
	\begin{subfigure}{.3\textwidth}
		\centering
		\includegraphics[scale=0.25]{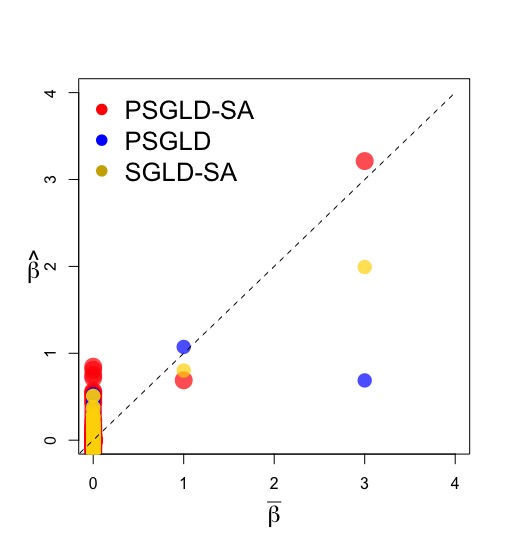}
		\caption{Posterior mean ($\bar{\boldsymbol \beta}$) vs true ($\hat{\boldsymbol \beta}$)}
	\end{subfigure}\;\;\;\;\;\;\;\;\;\;\;\;\;\;\;\;\;\;
	\begin{subfigure}{.3\textwidth}
		\centering
		\includegraphics[scale=0.25]{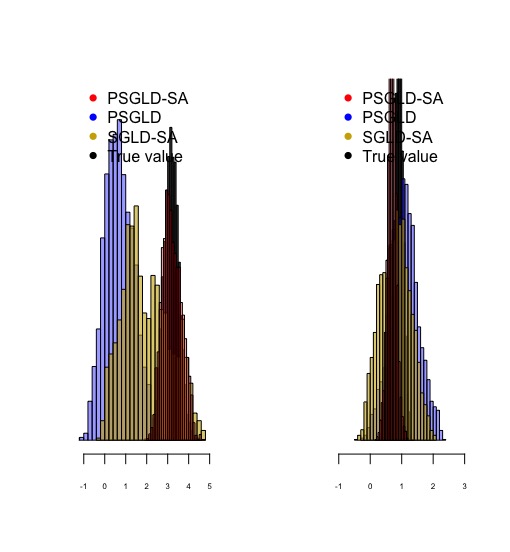}
		\caption{Posterior estimation of $\beta_1$ and $\beta_2$}
	\end{subfigure}
	
	\begin{subfigure}{.3\textwidth}
		\centering
		\includegraphics[scale=0.25]{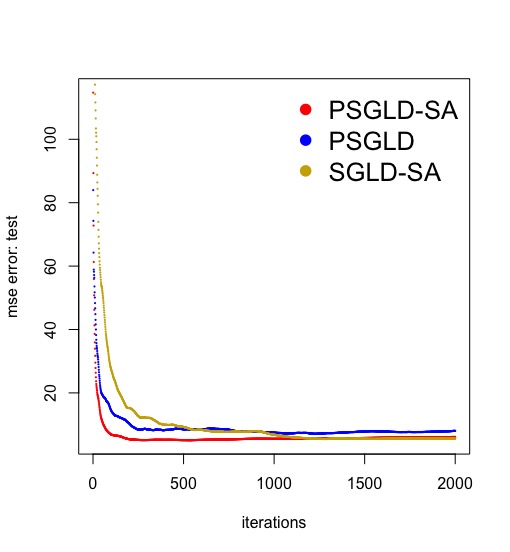}
		\caption{Testing MSE error history}
	\end{subfigure}\;\;\;\;\;\;\;\;\;\;\;\;\;\;\;\;\;\;
	\begin{subfigure}{.3\textwidth}
		\centering
		\includegraphics[scale=0.25]{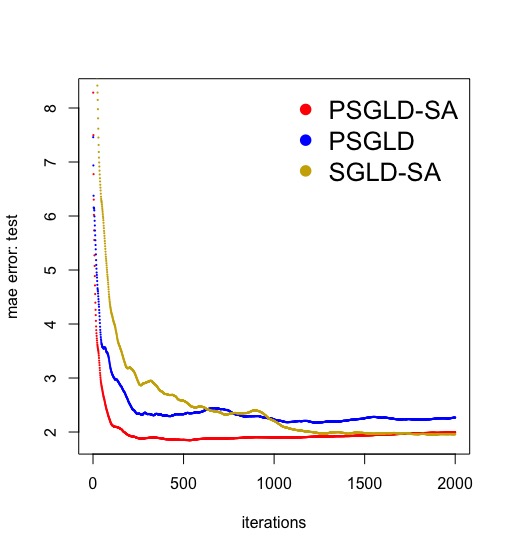}
		\caption{Testing MAE error history}
	\end{subfigure}\hfill
	\caption{Section \ref{sec:num_np} test 2. Large $p$ small $n$ regression for predictors with different scales. }\label{fig:example2}
\end{figure}

\subsection{Elliptic problem with heterogeneous coefficients}\label{sec:num_ex2}
Next, we apply the proposed approaches to solve the elliptic problem with heterogeneous coefficients. The mixed formulation of the elliptic problem reads:
\begin{align*}
\kappa^{-1} u + \nabla p &= 0 \quad \quad \quad \text{in}  \quad \Omega\\
\text{div} (u) &= f \quad \quad \quad \text{in}  \quad \Omega\\
u\cdot n &= u_N \quad \quad \text{on}  \quad \Gamma_N \\
p &= p_D \quad \quad \text{on}  \quad \Gamma_D
\end{align*}
where $\kappa$ represents the heterogeneous permeability field which can be generated using Karhunen-Loeve expansion. $f = 1$ is a constant source term, $\Omega$ is a squared computational domain $[0,1]\times[0,1]$, and $\Gamma_N  \cup \Gamma_D = \partial \Omega$. The boundary conditions are $u_N = 0$ at $[0,1]\times \{0\}$ and  $[0,1]\times \{1\}$, $p_D = 1$ at $ \{0\}\times[0,1]$, and $p_D = 0$ at $ \{1\}\times[0,1]$.

Specifically, the permeability fields $\kappa(x; \mu)$ can be constructed as follows:
\[
\kappa^H(x; \mu) = \kappa_0 +\displaystyle{\sum_{j=1}^{p}} \mu_j  \sqrt{\xi_j} \Phi_j(x)
\] 
where $ \kappa_0$ is a constant permeability, denotes the mean of the random field. $\displaystyle{\sum_{j=1}^{p} \mu_j \sqrt{\xi_j} \Phi_j(x)}$ corresponds to a random contribution obtained from Karhunen-Loeve expansion, and describes the uncertainty in the permeability field. $\mu_j$ are random numbers drawn from i.i.d $N(0,1)$. $(\sqrt{\xi_j}, \Phi_j(x))$ are the eigen-pairs obtained from a Gaussian covariance kernel:
\begin{equation*}
\text{Cov} (x_i, y_i; x_j, y_j) = \sigma \exp(\frac{|x_i -x_j|^2}{l_x^2} -\frac{|y_i -y_j|^2}{l_y^2}  )
\end{equation*}
where we choose $[l_x, l_y]=[0.2, 0.3]$, $\sigma = 2$ and $p=32, 64, 128$ in our example. 

In the discretized system, we use $\text{RT}_0$ element for the velocity space $V_h$, and piecewise constant element $P_0$ for the pressure solution space $Q_h$. 
\begin{align*}
a(u,v)+b(v,p) &= \int_{\Gamma_\Omega} p_D v \cdot n  \text{d} s   & \text{ for all }  v\in V_h\\
b(u,q) &= -(f,q)   &\text{ for all }  q\in Q_h
\end{align*}
where $a(u,v) = \int_{\Omega} \kappa^{-1} u \cdot v$, and $b(v,p) = -\int_\Omega p \; \text{div} v $.

The discrete system has the following matrix formulation
\begin{equation}\label{eq:vel_mat_single}
\begin{bmatrix}
A_h(\kappa) & B_h^T   \\
B_h & 0
\end{bmatrix}
\begin{bmatrix}
u_h \\
p_h
\end{bmatrix} =
\begin{bmatrix}
G_D  \\
-F
\end{bmatrix}
\end{equation}

However, due to the multiscale nature of $\kappa$, a sufficiently fine mesh is required to resolve all scale properties. Thus the fine matrix $\begin{bmatrix}
A_h(\kappa) & B_h^T   \\
B_h & 0
\end{bmatrix}$ has a large size, leading to some difficulties in solving the linear system. To overcome these, one can develop a reduced order model as a surrogate. Numerous mixed multiscale methods have been explored \cite{MixedGMsFEM, Arbogast_mixed_MS_11, ae07}. For example, in  \cite{MixedGMsFEM}, one aims to construct velocity multiscale basis in each local coarse region, and use the piecewise constant on coarse grid to approximate the pressure. Typically, let $N_u^H$ be the dimension of the multiscale velocity space, and denote by $R_u$ the matrix assembled using multiscale velocity basis in every row, then $R_u$ maps from $\mathbb{R}^{N_u^h}$ to $\mathbb{R}^{N_u^H}$, where $N_u^h$ is the fine degrees of freedom for the velocity. Similarly, denote by $R_p$ the matrix containing coarse grid piecewise constant basis for pressure which maps from $\mathbb{R}^{N_p^h}$ to $\mathbb{R}^{N_p^H}$. Then one can rewrite the system \ref{eq:vel_mat_single} in the following form
\begin{equation}\label{eq:vel_mat_single_coarse}
\begin{bmatrix}
A_H & B_H^T   \\
B_H & 0
\end{bmatrix}
\begin{bmatrix}
u_H \\
p_H
\end{bmatrix} =
\begin{bmatrix}
R_u & 0   \\
0 & R_p
\end{bmatrix}
\begin{bmatrix}
A_h(\kappa) & B_h^T   \\
B_h & 0
\end{bmatrix}
\begin{bmatrix}
R_u^T & 0   \\
0 & R_p^T
\end{bmatrix}
\begin{bmatrix}
u_H \\
p_H
\end{bmatrix}=
\begin{bmatrix}
0   \\
-F_H
\end{bmatrix}
\end{equation}
where $\begin{bmatrix}
R_u & 0   \\
0 & R_p
\end{bmatrix}$ can be viewed as an encoder which maps from fine grid to coarse grid (upscaling), and $\begin{bmatrix}
R_u^T & 0   \\
0 & R_p^T
\end{bmatrix}$ acts as an decoder which maps from coarse grid to fine grid (downscaling).

The coarse grid solver reveals its efficiency when we need to solve flow problem with varying source or boundary conditions, while with a fixed permeability field. However, in a lot of applications, it is more interesting to solve for the velocity $u$ given different $\kappa$. When the permeability fields vary, one needs to reconstruct the multiscale basis (reconstruct the matrix $R_u$) in the above mentioned multiscale method framework, which is not practical. 

As discussed in \cite{wang_multiphase}, we will construct an encoding-decoding type of network to approximate the relationship between the permeability fields $\kappa$ and fine grid velocity solution $u_h$. That is,  $u = \mathcal{N}(\kappa; \theta)$. The proposed network structure is in analogy to the coarse-scale solver but will take permeability fields as input without constructing a set of the multiscale bases for each case. 

The idea is to first apply a few convolution layers to extract features from the input permeability with size $\sqrt{N_p^h} \times \sqrt{N_p^h}$, and then project the extracted features on a coarser mesh by employing an average pooling layer. The intermediate output is then flattened and is linked to $N_p^H$ neurons with a fully connected layer. This procedure is in analogy to upscaling. We will then reshape the hidden coarse grid features to an image with size $\sqrt{N_p^H} \times \sqrt{ N_p^H}$. A few locally connected layers or convolution layers are followed to mimic the coarse grid solver. 

After that, the resulting hidden features are flattened again and are fully connected with $N_u^H$ neurons in the next layer, where $N_u^H$ is the degrees of freedom for multiscale velocity space. It is natural to represent the coarse grid velocity using a vector since the degrees of freedom are not located at coarse grid centers, which makes it not obvious to reshape it as a square image. Finally, we decode the coarse level features using a densely connected layer, and we obtain fine grid velocity output with dimension $N_u^h$. The network architecture is illustrated in Figure \ref{fig:vel_lcn}.
 
 However, the last downscaling layer is still fully connected. Due to the large degrees of freedom for the velocity solution, the last fully connected layer contributes very large numbers of trainable weights. Here, we would like to use our proposed sparse learning method to tackle this difficulty.

\begin{figure}[!hbt]
	\centering
	\includegraphics[scale=0.35]{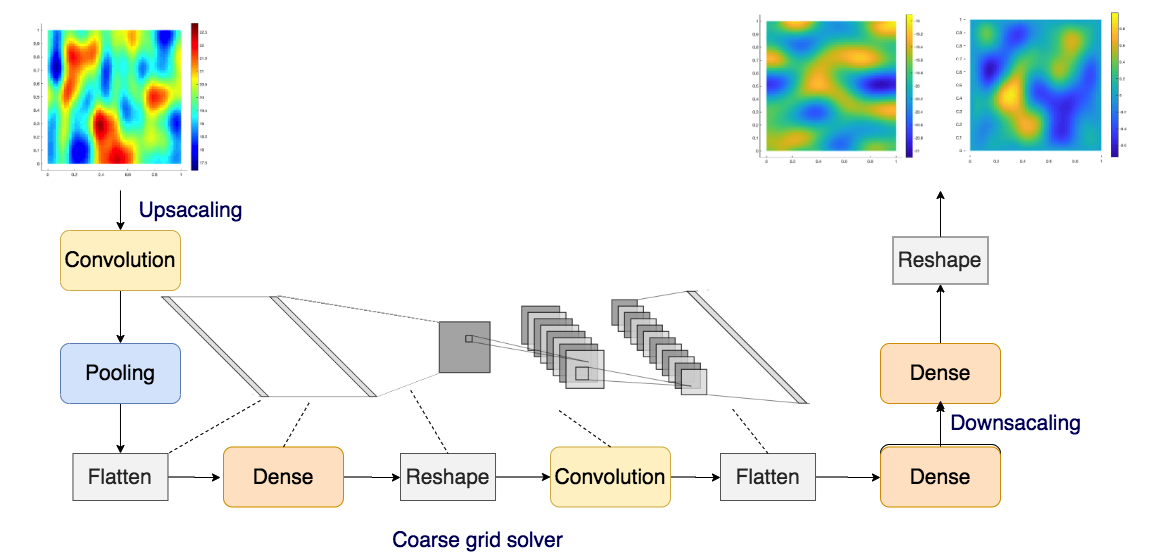}
	\caption{An illustration of the network architecture for flow approximation.}
	\label{fig:vel_lcn}
\end{figure}

The training and testing data can be generated by solving the equations with a mixed finite element method on the fine grid for various permeability fields. An illustrations of the permeability fields for $p=32, 64, 128$ and corresponding their corresponding solutions are presented in \ref{fig:kle_sol}. We can see that when $p$ becomes larger, the velocity solutions exhibit many more scale features. 

\begin{figure}[!hbt]
	\begin{subfigure}[t]{1.0\textwidth}
		\centering
		\includegraphics[width=.24\textwidth]{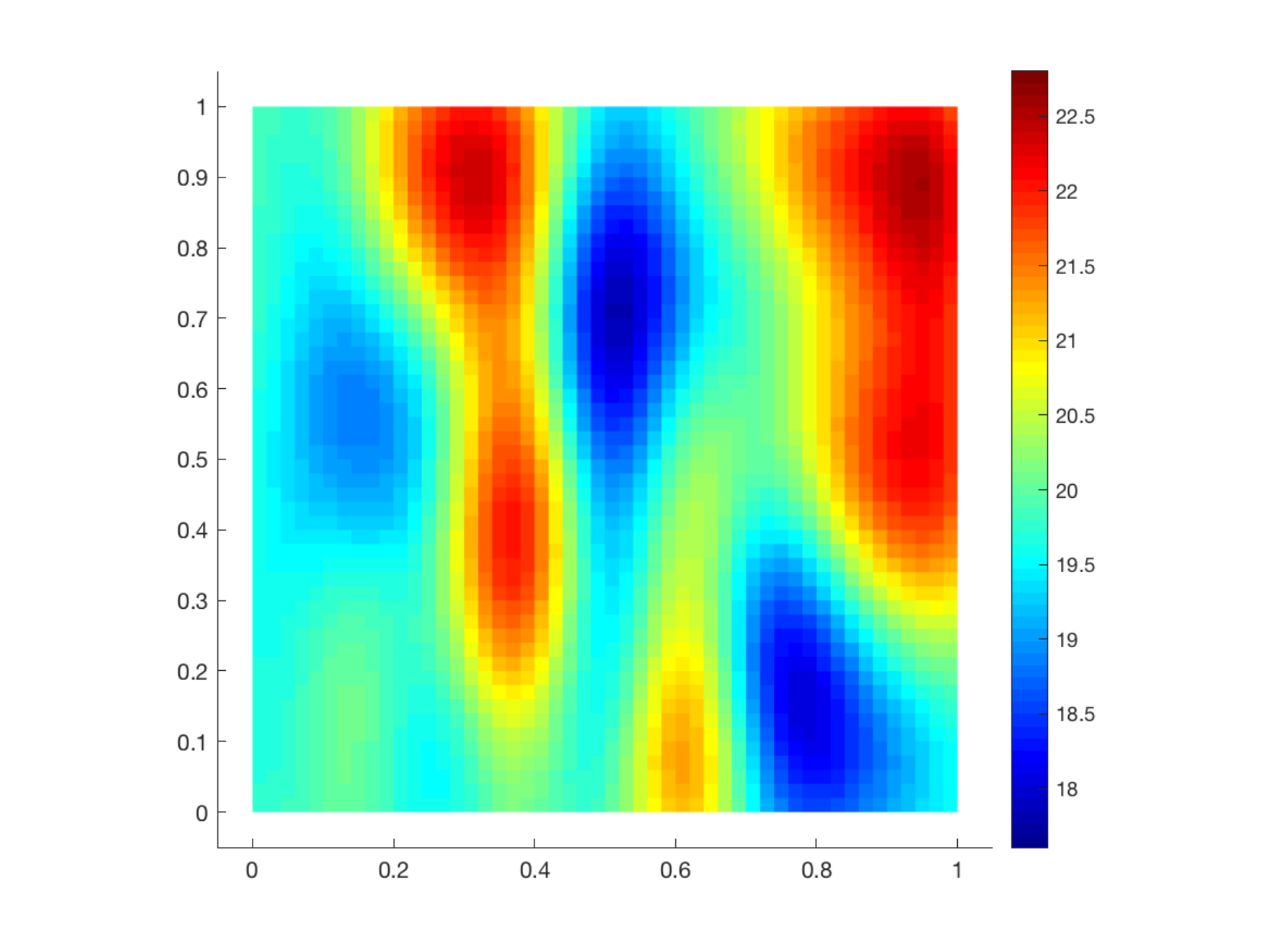} \hfill
		\includegraphics[width=.24\textwidth]{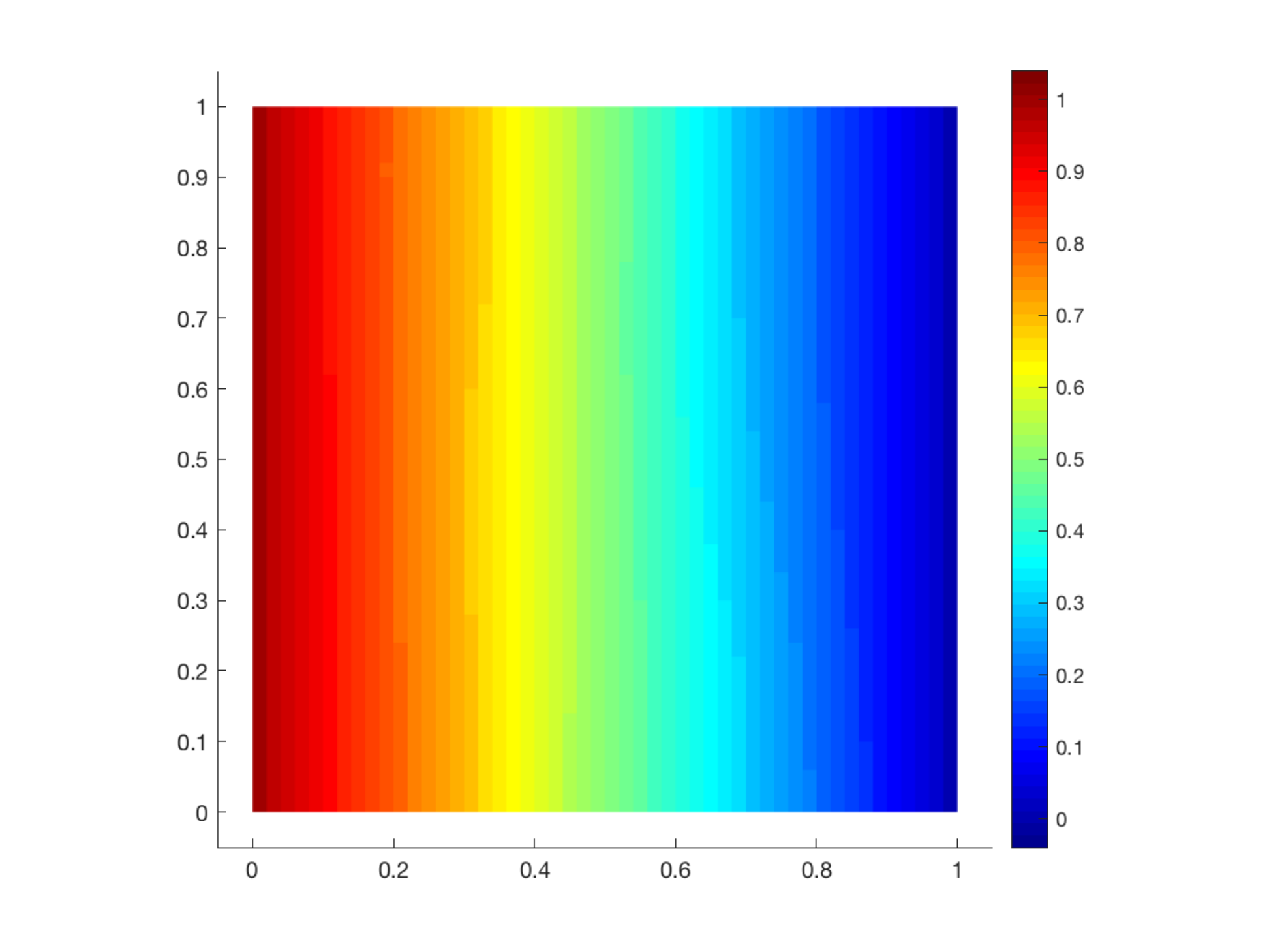}\hfill
		\includegraphics[width=.24\textwidth]{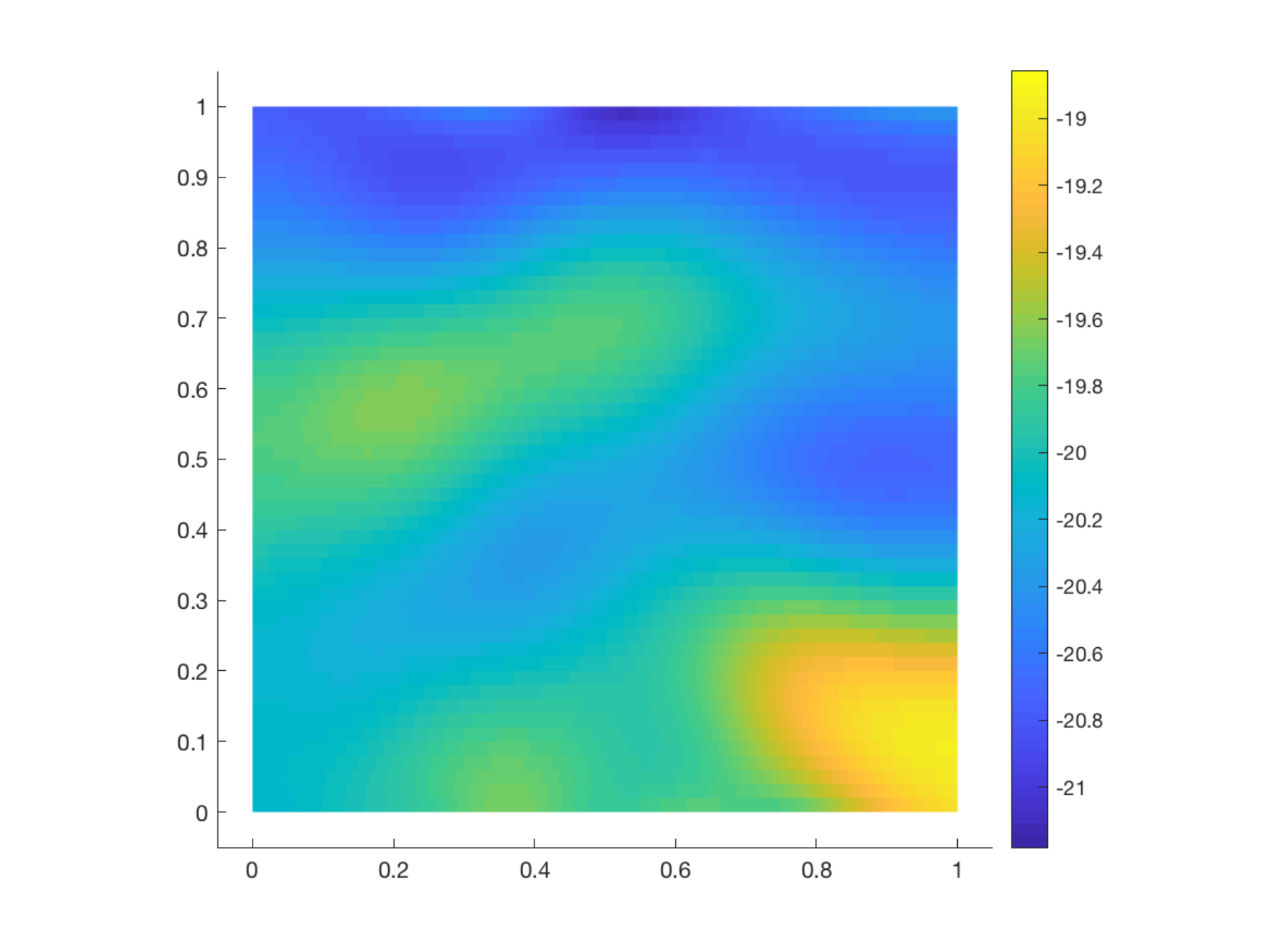}	\hfill	
		\includegraphics[width=.24\textwidth]{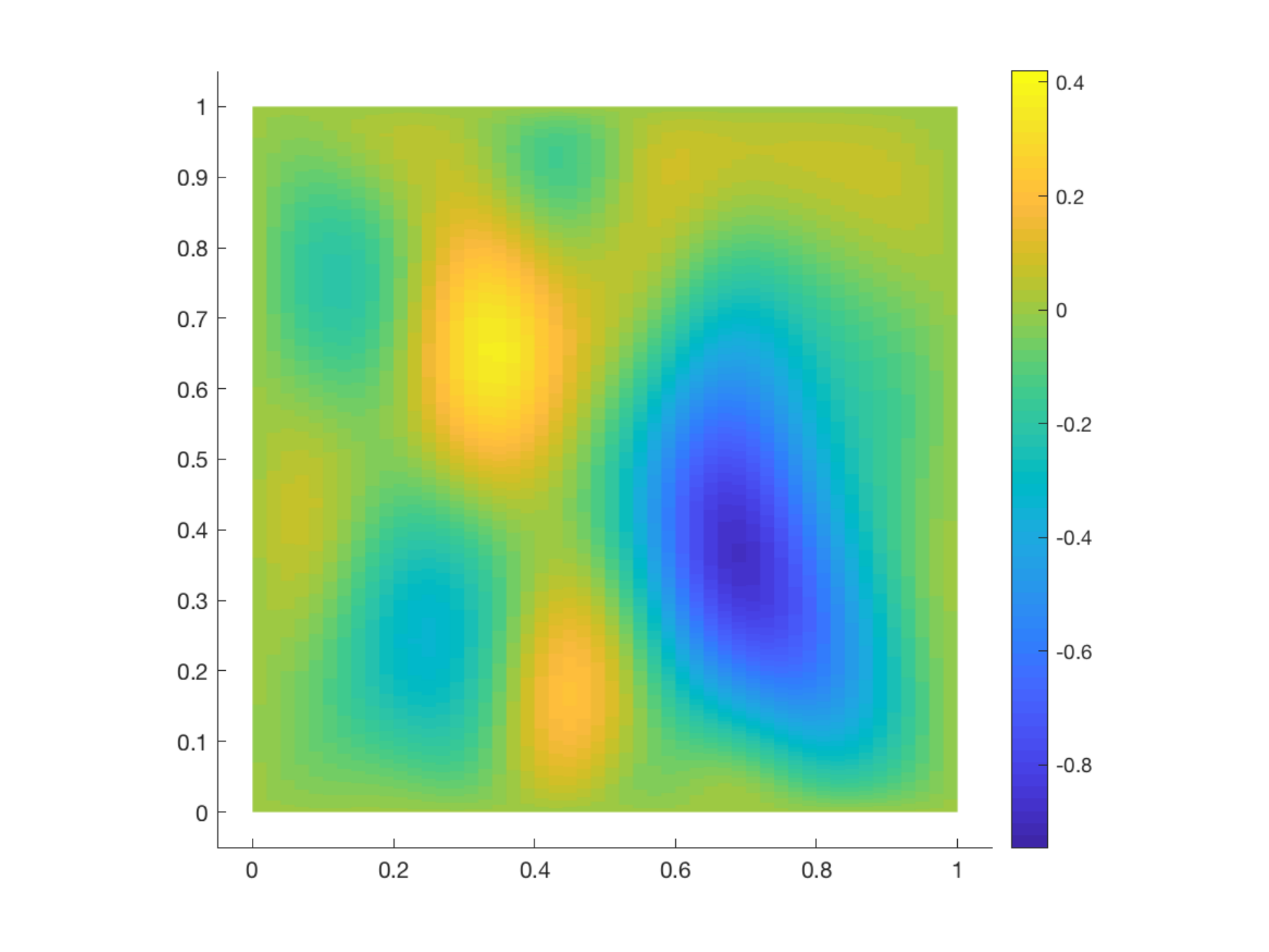}\hfill
		\caption{KLE 32}
	\end{subfigure}
			\begin{subfigure}[t]{1.0\textwidth}
		\centering
		\includegraphics[width=.24\textwidth]{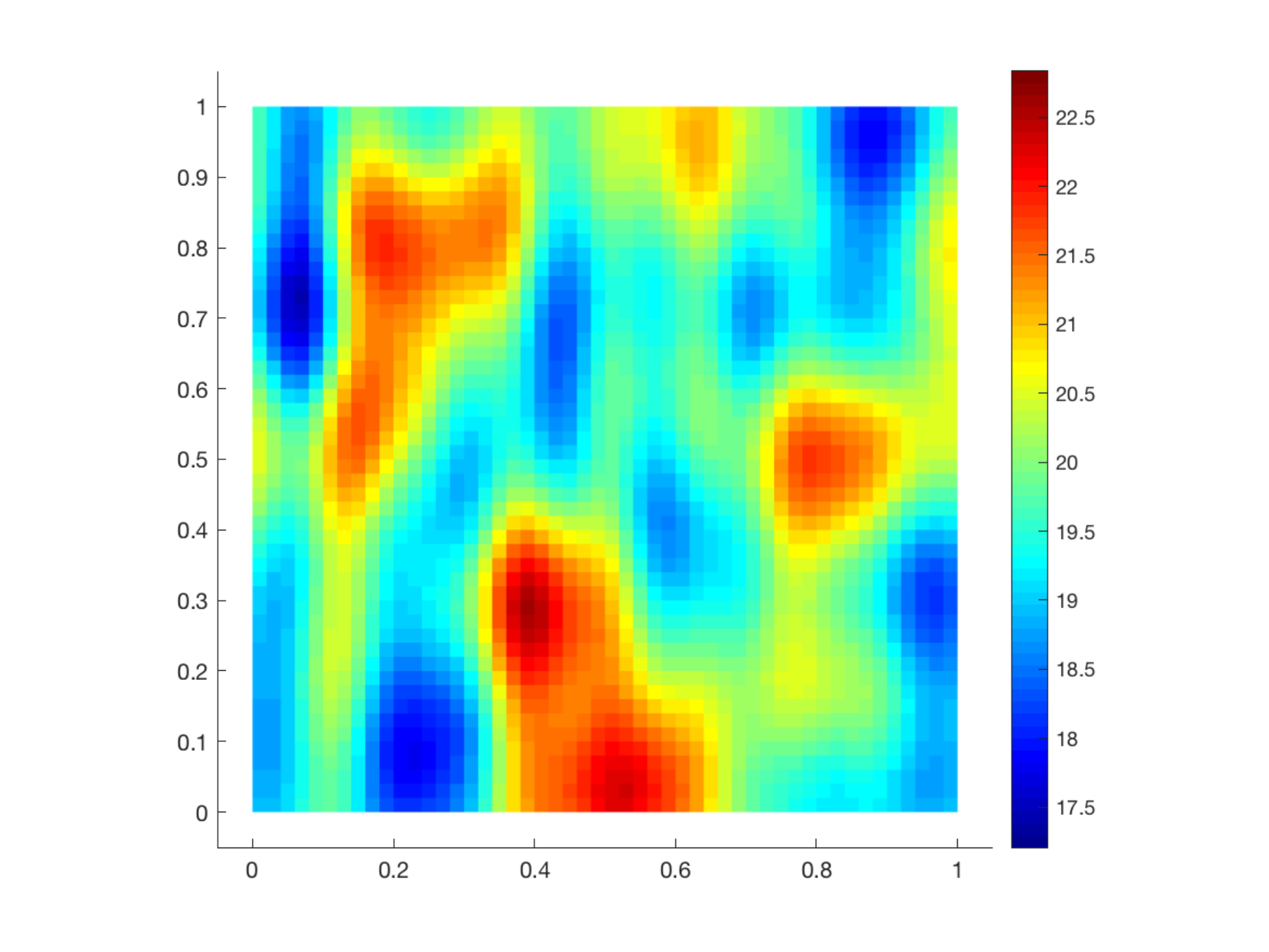}\hfill
		\includegraphics[width=.24\textwidth]{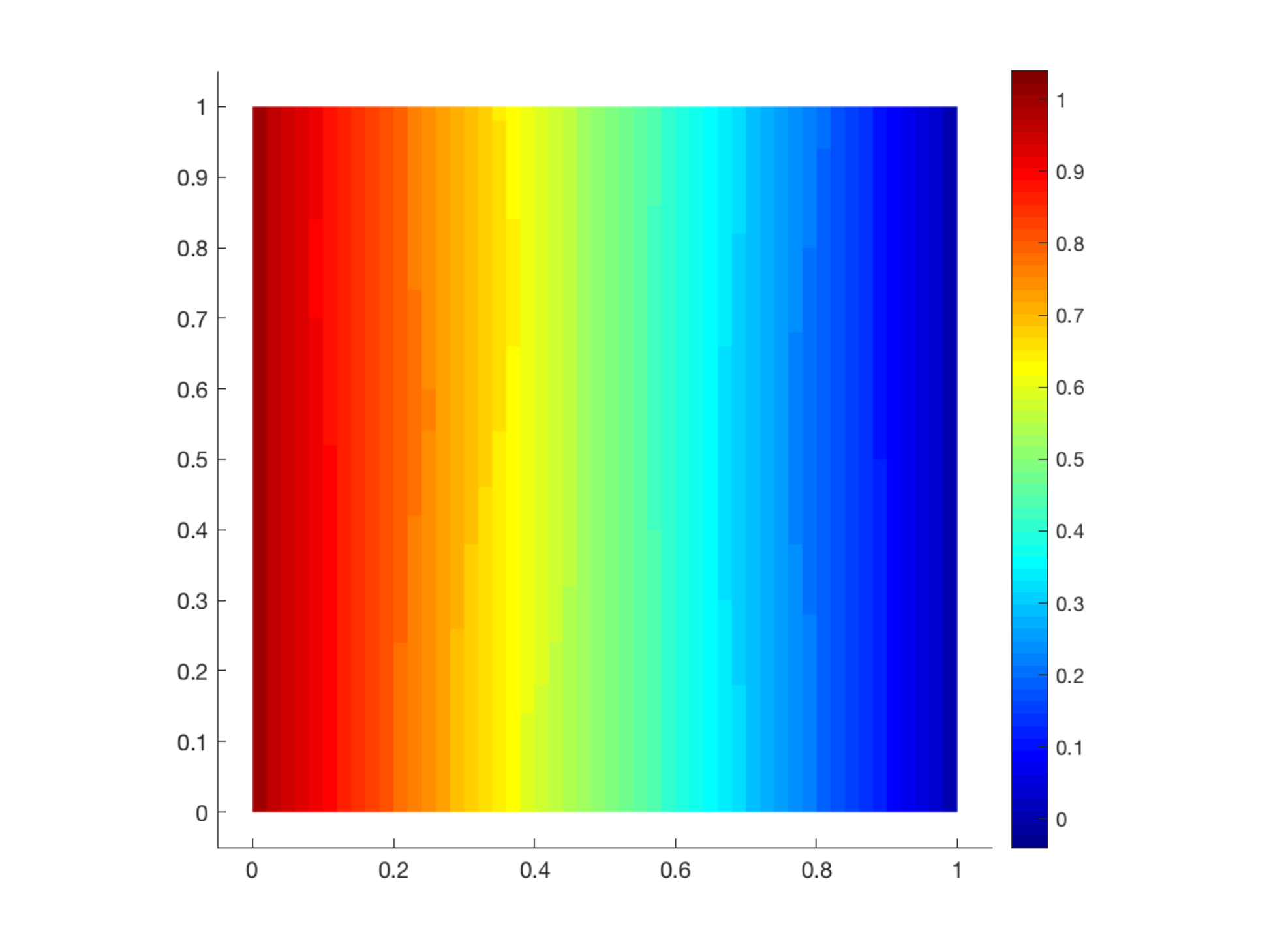}\hfill
		\includegraphics[width=.24\textwidth]{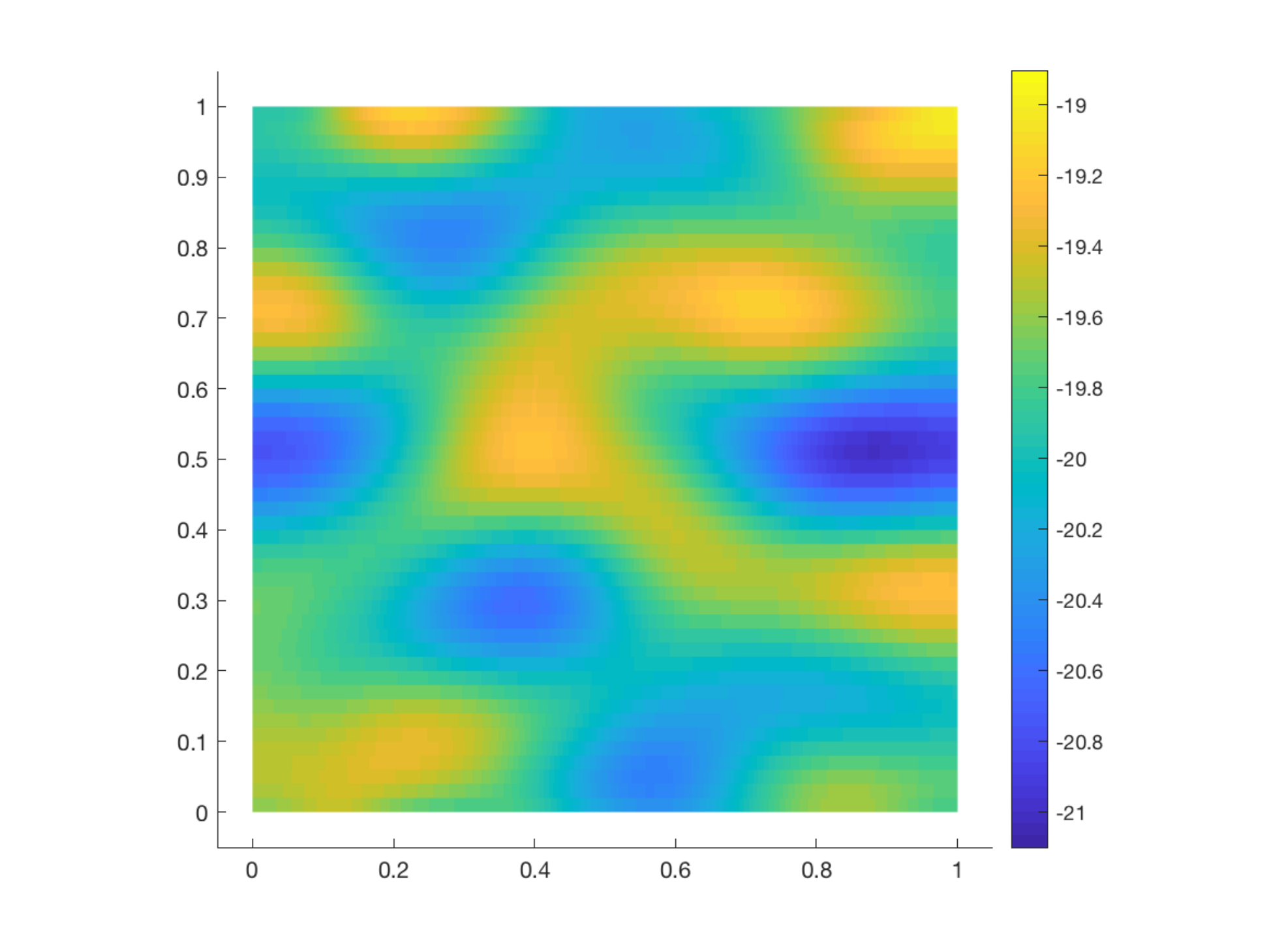}	\hfill	
		\includegraphics[width=.24\textwidth]{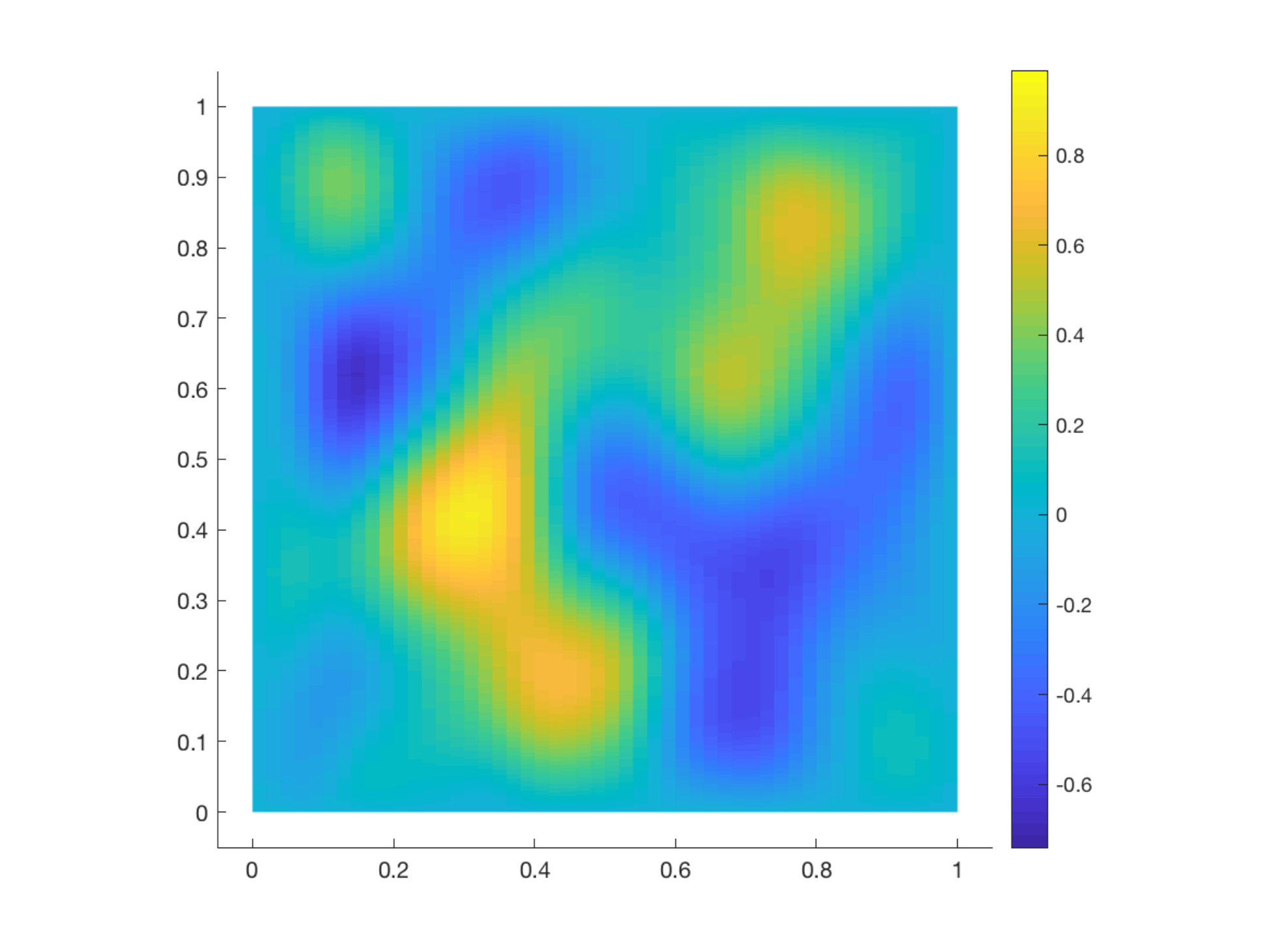}\hfill
		\caption{KLE 64}
	\end{subfigure}
		\begin{subfigure}[t]{1.0\textwidth}
	\centering
	\includegraphics[width=.24\textwidth]{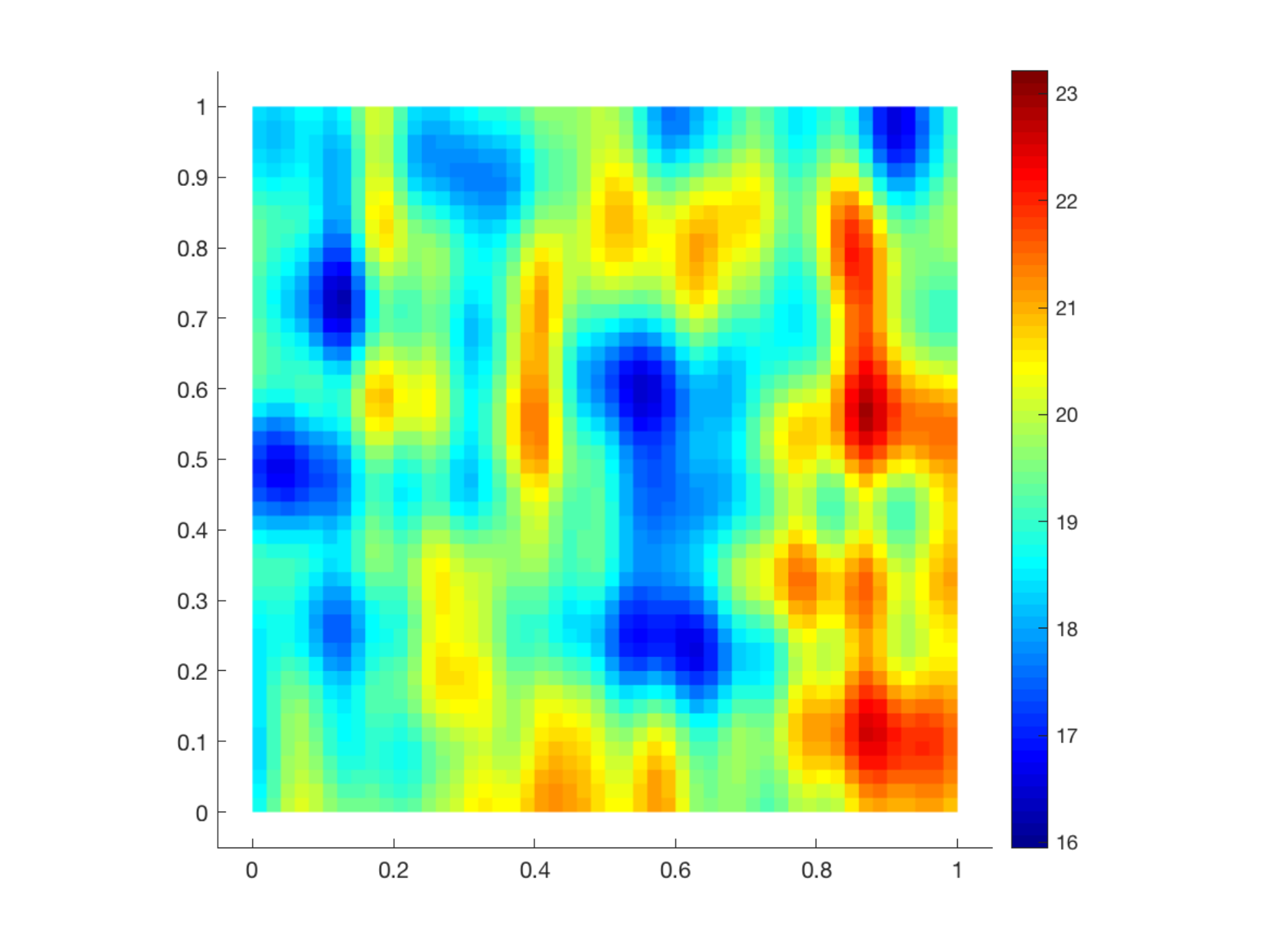}\hfill
	\includegraphics[width=.24\textwidth]{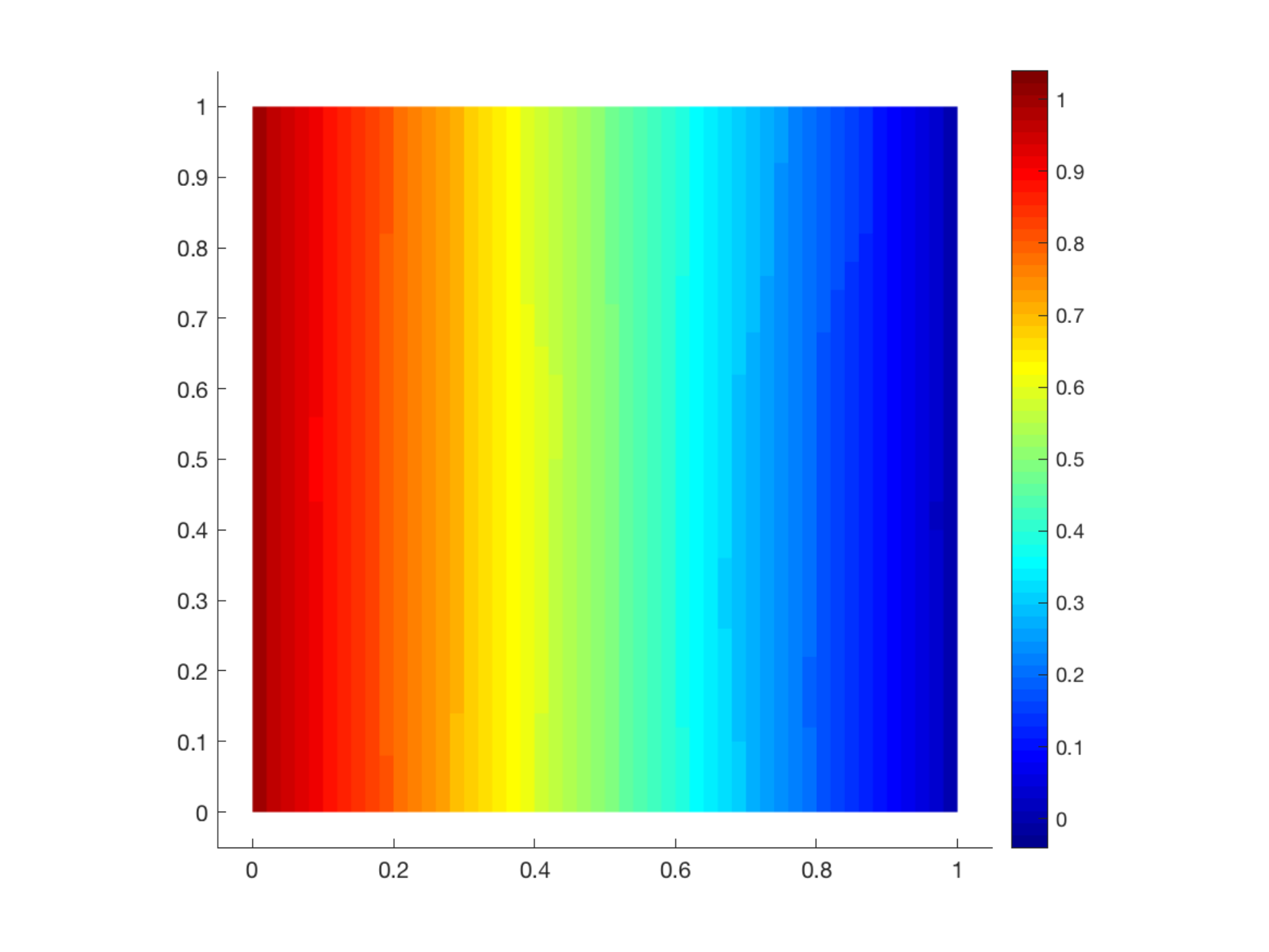}\hfill
	\includegraphics[width=.24\textwidth]{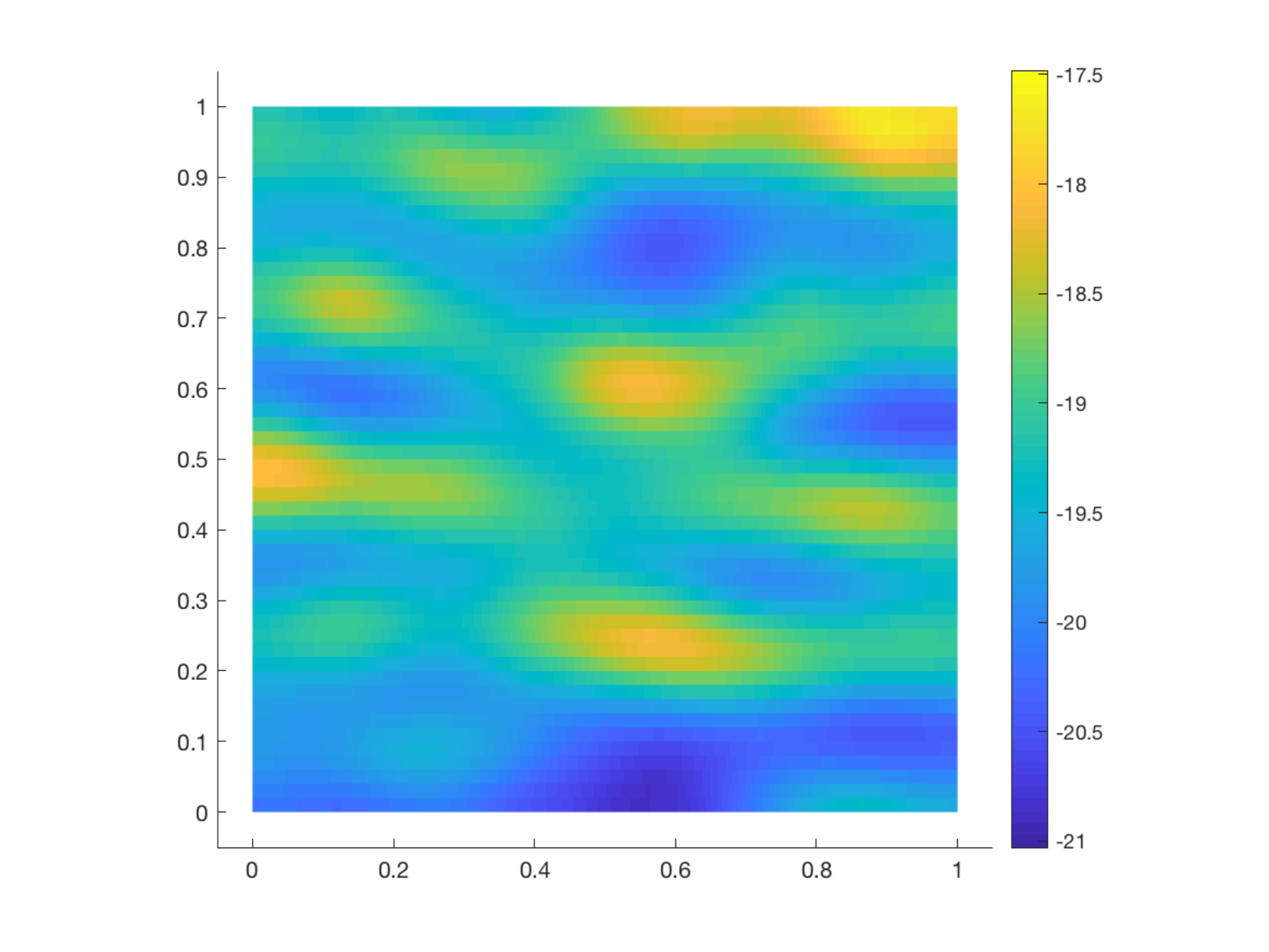}	\hfill	
	\includegraphics[width=.24\textwidth]{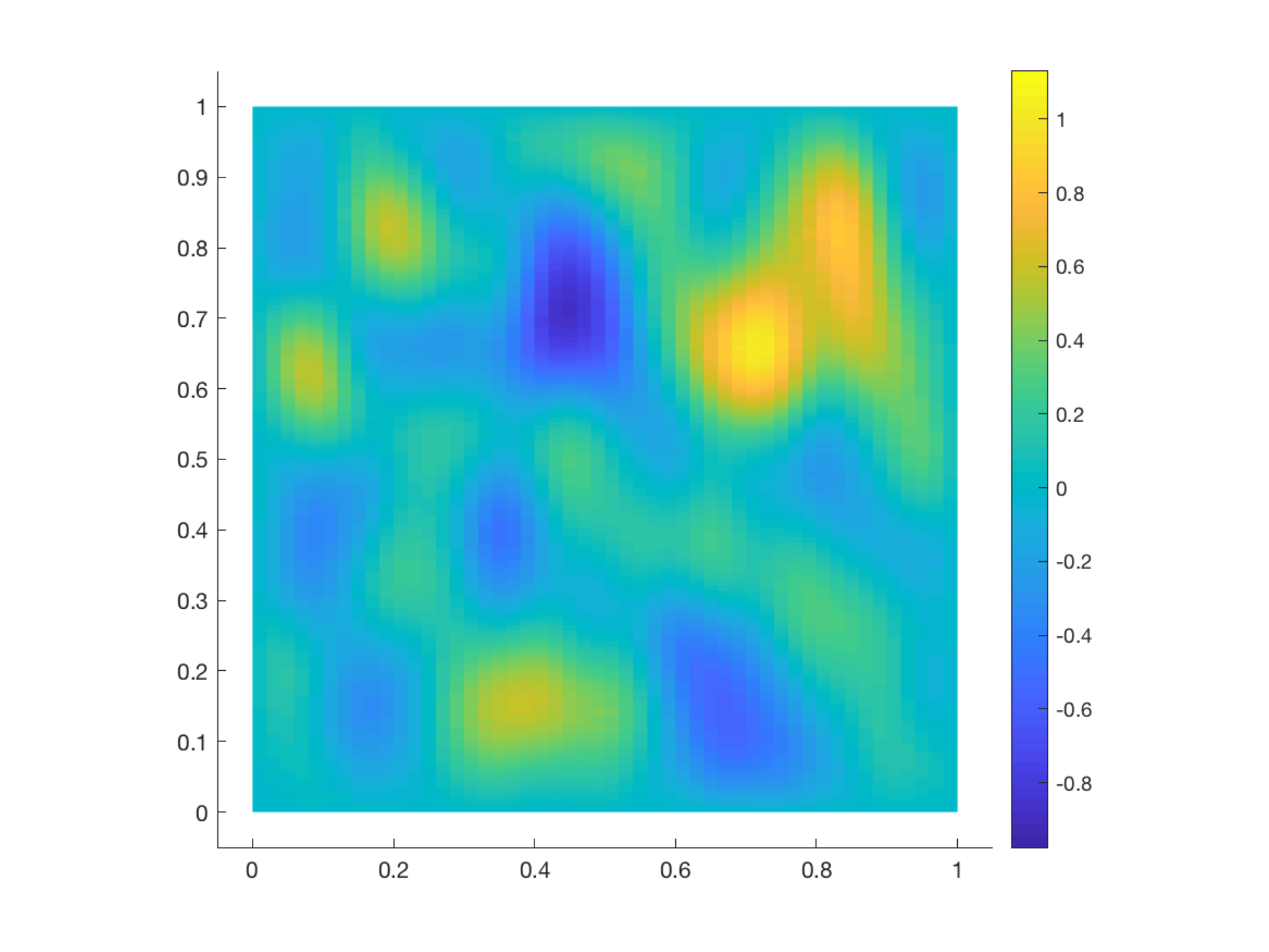}\hfill
	\caption{KLE 128}
\end{subfigure}
	\caption{ Illustrations of the permeability fields when using different number of terms in KLE expansion and corresponding solutions. In each subplot, permeability (upper left), pressure solution (upper right), horizontal velocity magnitude (lower left) and vertical velocity magnitude (lower right).}\label{fig:kle_sol}
\end{figure}

We generate $1500$ samples pairs $(\kappa_i, u_h^i)$, and randomly pick $1300$ samples to train the network, and take the rest of the samples for validation. The size of an input permeability is $50\times 50$, an output velocity solution vector is $5,100$. The network first uses 2 convolution layers with window size $3\times 3$, and $64$ and $32$ channels respectively. Then, an average pooling layer with pool size $2\times 2$ is followed by a flatten layer and then a fully connected layer with $100$ nodes. This part of the network can be viewed as an encoder. Then, a reshaping layer, another two convolution layers, a flatten layer, and a dense layer with $800$ neurons are used to mimic the coarse grid solver. In the end, a dense layer is used as a decoder. The total number of parameters is $8,252,320$ in the entire network, and the layers we choose to perform sparse learning contains $6,110,624$ parameters.

The numerical results using SGLD, PSGLD, SGLD-SA and PSGLD-SA are presented in Table \ref{tab:example2}. Denote by $e_1 = \frac{\norm{u_{\text{pred}} - u_{\text{true}} }_{L^2}}{\norm{u_{\text{true}} }_{L^2}}$ and $e_2 = \frac{\norm{u_{\text{pred}} - u_{\text{true}} }_{L^2_{\kappa}}}{\norm{u_{\text{true}} }_{L^2_{\kappa}}}$ where $||u||_{L^2_{\kappa}} = \int_{\Omega} \kappa^{-1} |u|^2$. The mean relative errors $e_!$ and $e_2$ among 300 testing samples are shown. We can see that, the results using PSGLD-SA outperforms SGLD-SA in all three cases when $p=32, 64, 128$. Some predicted results for different values of $p$ in KLE are presented in Figures \ref{fig:kle32}, \ref{fig:kle64} and \ref{fig:kle128}. We can actually see that, the predictions using SGLD-SA is very poor, but the results using PSGLD-SA are very similar to the ground truth.

In this example, we choose the sparse rate to be $50\%$ and $70\%$. By choosing appropriate hyperparameters, we can achieve similar accuracy for the dense cases and sparse cases as shown in Table \ref{tab:example2}. This indicates enforcing sparsity using our method can maintain accuracy while reducing storage/computational cost.
But if the sparse rate is too large, we find it's hard to get comparable results since over sparse network may not be sufficient to represent the properties of the target map of interest. On the other hand, compare PSGLD and SGLD, we notice that applying preconditioners can provide better results. The learning curves are presented in \ref{fig:kle_learning_curves}. It shows that PSGLD-SA converges faster than SGLD-SA or vanilla PSGLD in all three cases.

\begin{table}[!htb]
	\centering
	\begin{tabular}{|c  |c  | c  | }
		\hline
			\multicolumn{3}{|c|}{Dense} \\ \hline
			&	PSGLD ($e_1$/ $e_2$ \% )& 	SGLD($e_1$/ $e_2$ \% )  \\  \hline
	 KLE32 &0.75/0.57  & 2.37  /2.17  \\  \hline
	KLE64 & 0.82/0.63 &2.38 /2.25    \\  \hline
	KLE128  &2.13 /1.93  &2.90 /2.60       \\  \hline
    		&	PSGLD-SA ($e_1$/ $e_2$ \% )& 	SGLD-SA ($e_1$/ $e_2$ \% )  \\  \hline
    			\multicolumn{3}{|c|}{Sparse rate 50\%} \\ \hline
    KLE32 &0.59/0.56  & 2.67  /2.35    \\  \hline
	KLE64 & 0.78 /0.58 &2.68 /2.41      \\  \hline
	KLE128  &1.60 /1.31  &3.47 /3.00       \\  \hline
			\multicolumn{3}{|c|}{Sparse rate 70\%} \\ \hline
			    KLE32 &0.58/ 0.51  &2.28 /2.10    \\  \hline
			KLE64 & 0.76 /0.61 &2.40 /2.97      \\  \hline
			KLE128  &1.79 /1.60  & 3.51/3.02  \\  \hline
	\end{tabular}
	\caption{Mean errors between the true and predicted velocity solutions using SGLD, PSGLD, SGLD-SA, and proposed PSGLD-SA. Mean errors of $300$ testing cases.}\label{tab:example2}
\end{table}

\begin{figure}[!ht]
	\begin{subfigure}[t]{0.3\textwidth}
		\centering
		\includegraphics[scale=0.3]{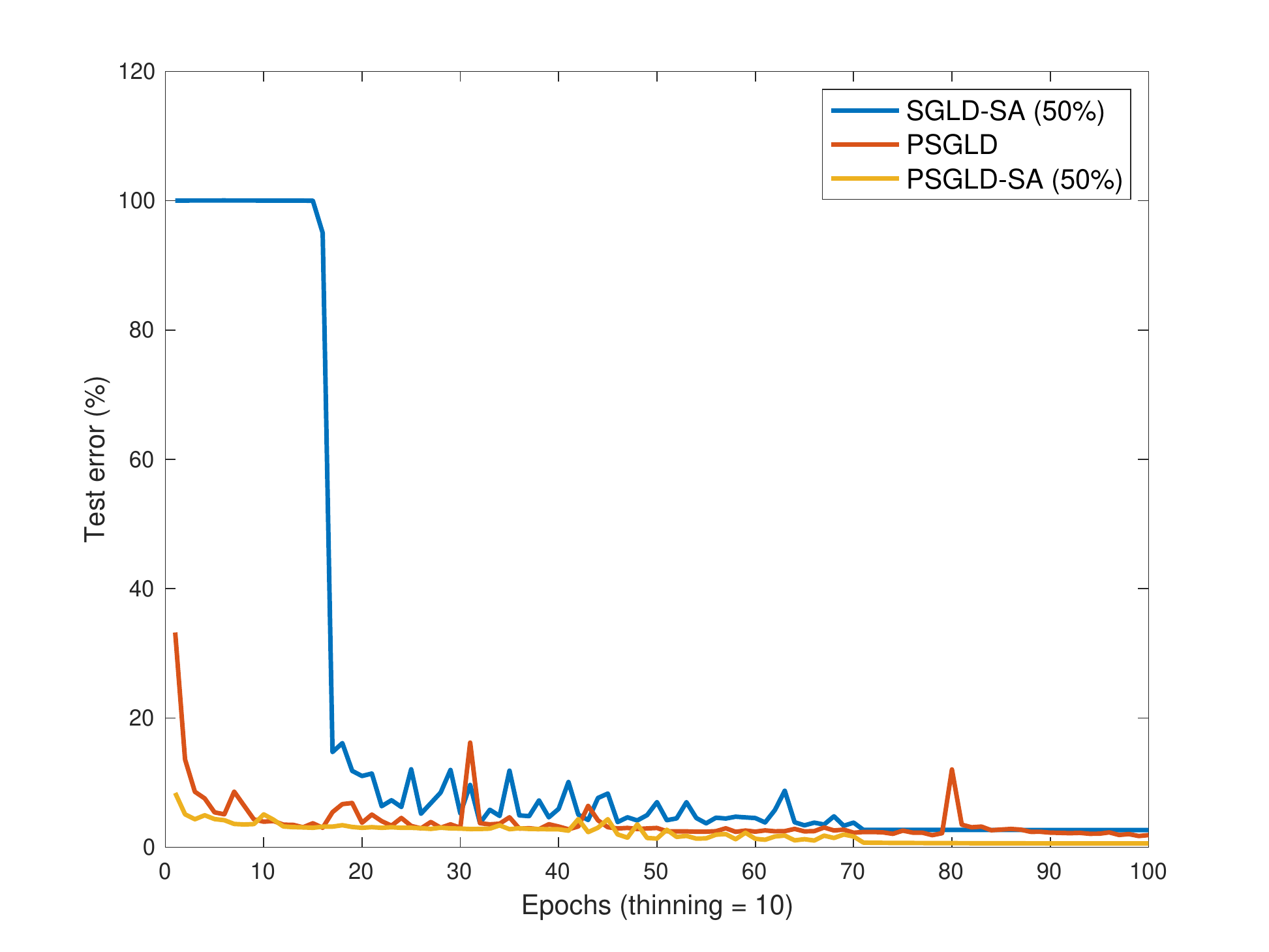}
		\caption{KLE 32, learning curves}
	\end{subfigure}
	\begin{subfigure}[t]{0.3\textwidth}
		\centering
		\includegraphics[scale=0.3]{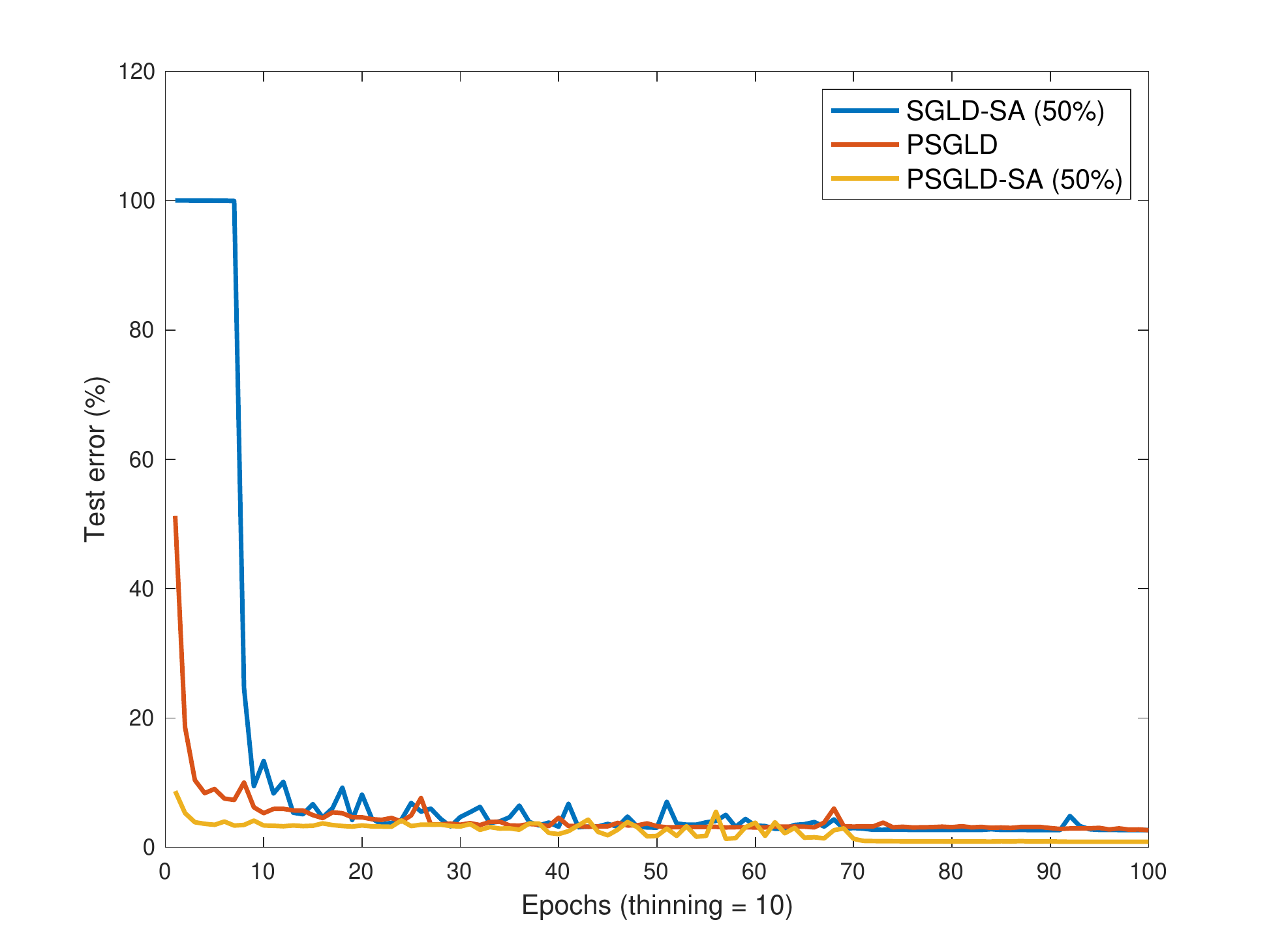}	
		\caption{KLE 64,  learning curves}
	\end{subfigure}
	\begin{subfigure}[t]{0.3\textwidth}
		\centering
		\includegraphics[scale=0.3]{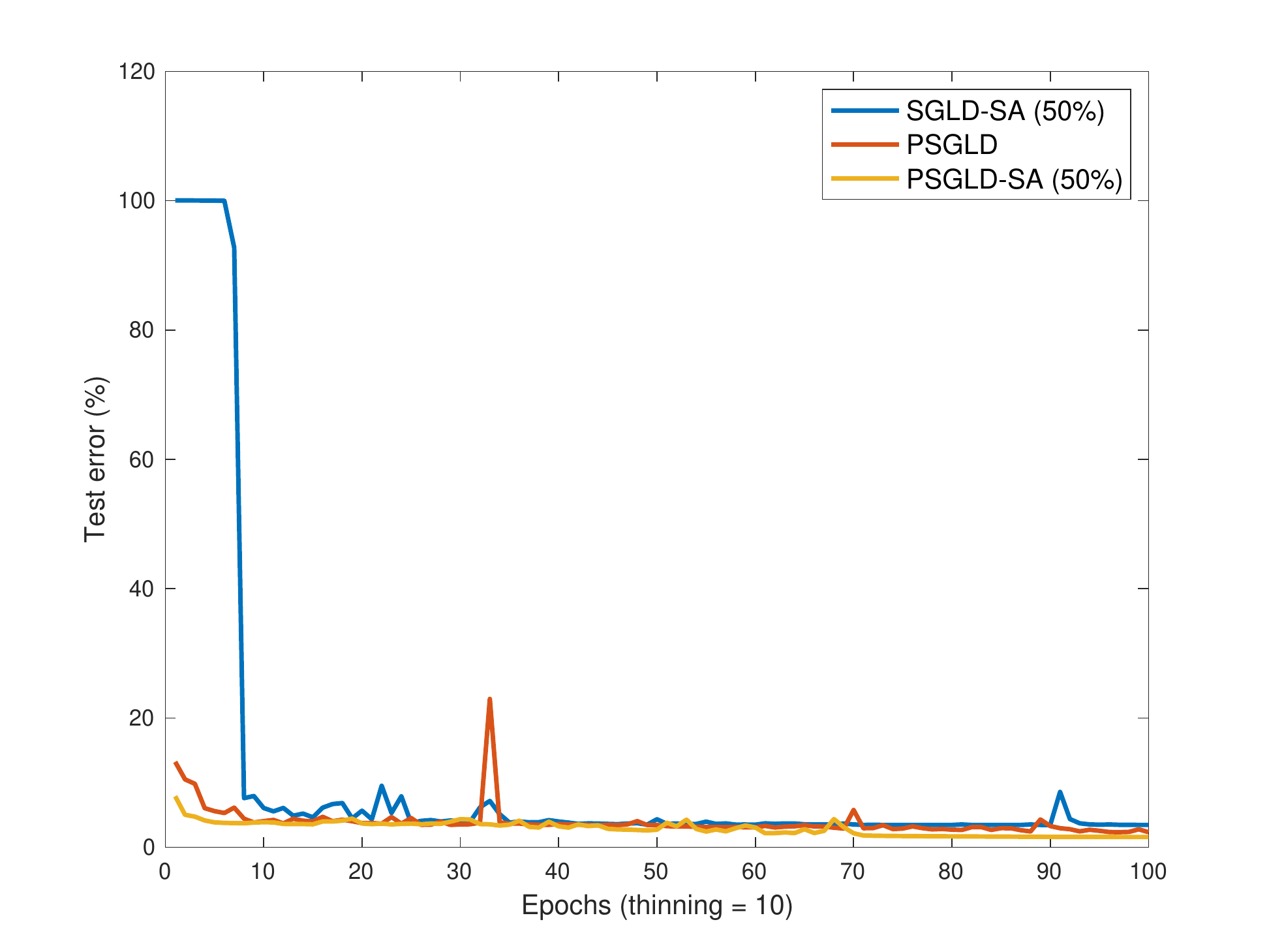}	
		\caption{KLE 128,  learning curves}
	\end{subfigure}
	\caption{Learning curves. In each sub-figure, there are comparison among test errors SGLD with sparse approximation, vanilla PSGLD, and PSGLD with sparse approximation.}\label{fig:kle_learning_curves}
\end{figure}

\begin{figure}[!hbt]
	\begin{subfigure}[t]{0.9\textwidth}
		\centering
		\includegraphics[scale=0.7]{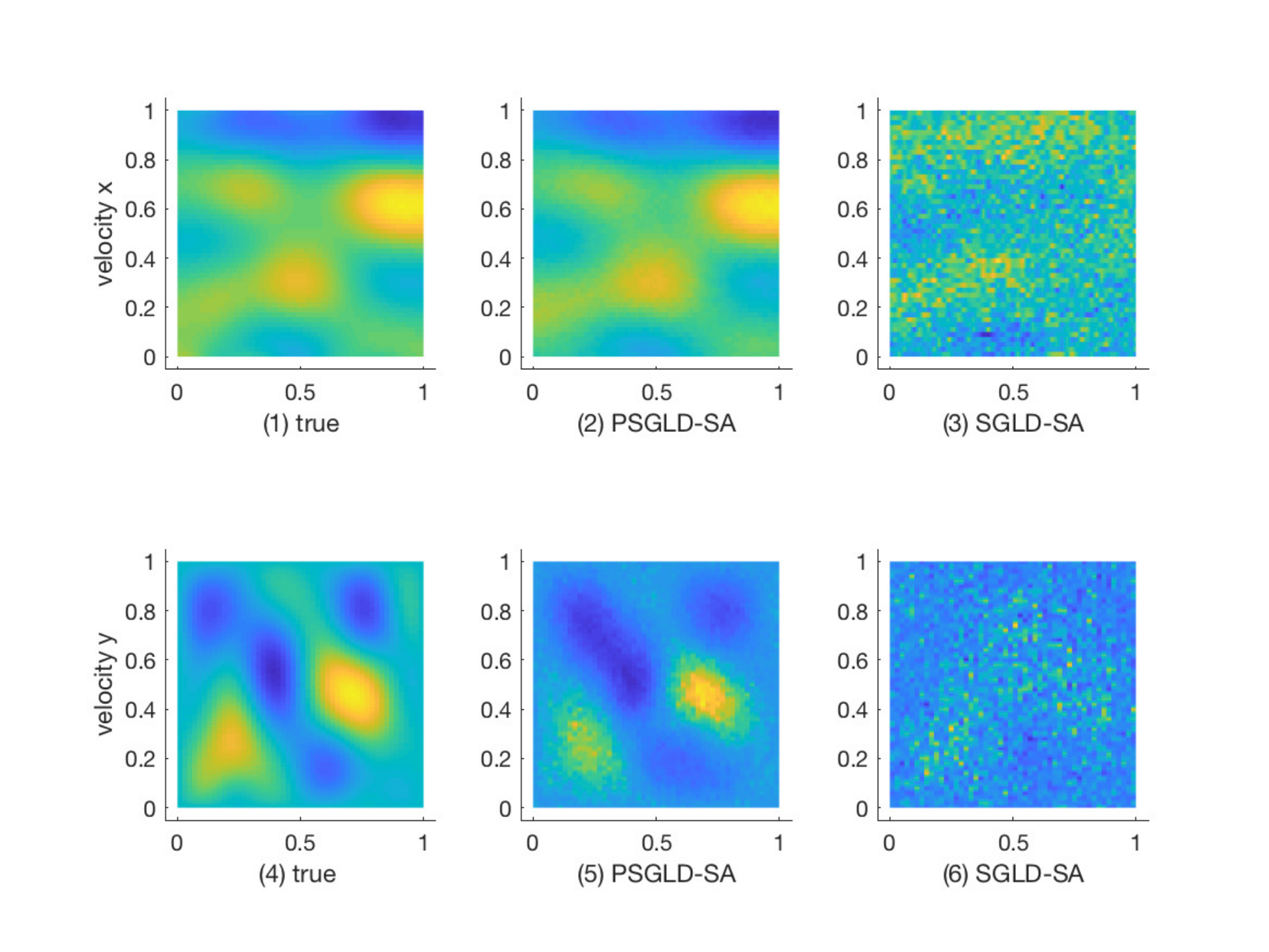}
		\caption{KLE 32, test sample 1}
	\end{subfigure}

	\begin{subfigure}[t]{0.9\textwidth}
		\centering
		\includegraphics[scale=0.7]{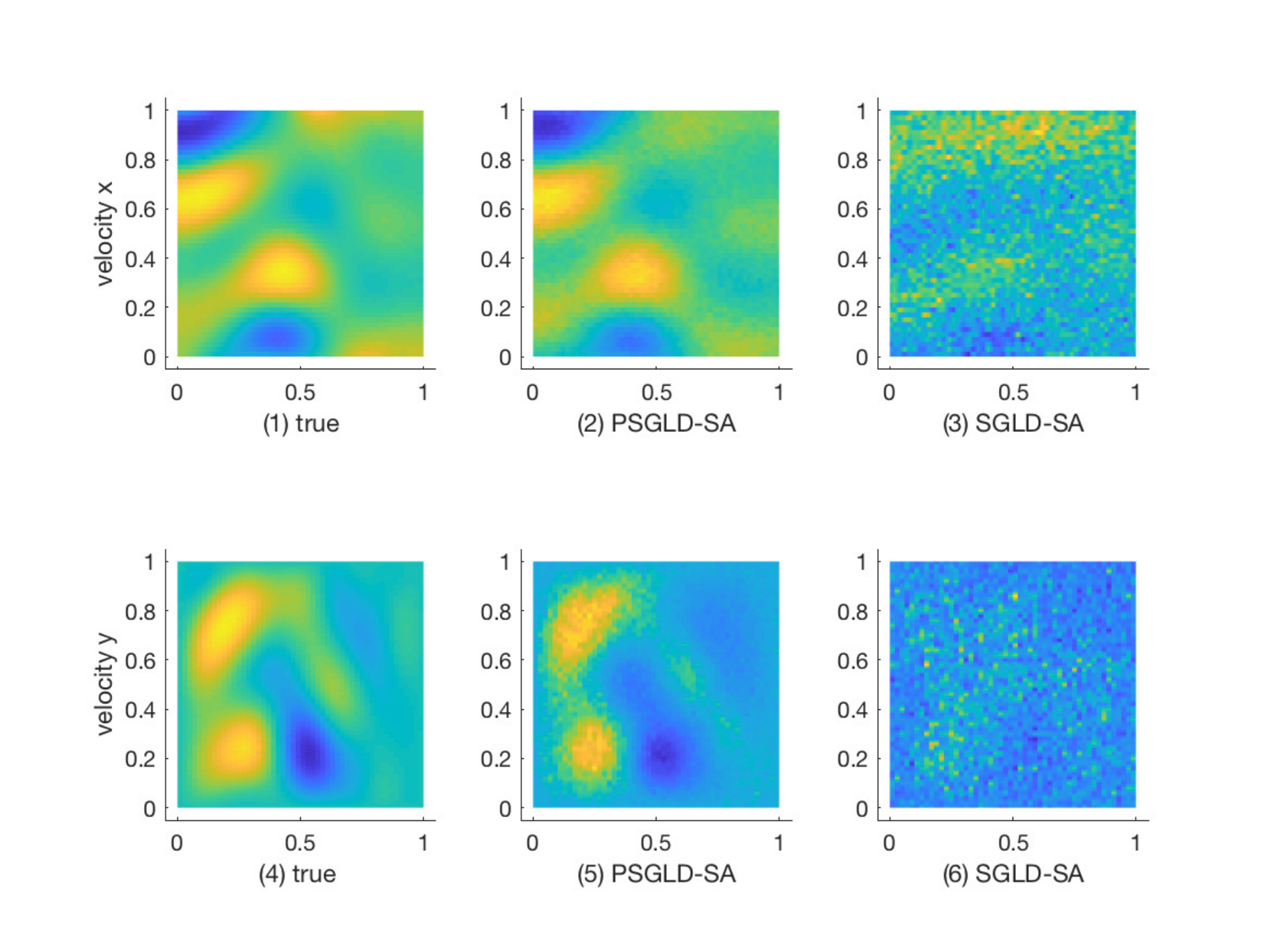}	
		\caption{KLE 32, test sample 2}
	\end{subfigure}

	\caption{KLE 32. True and prediction solutions. In each sub-figure, the first row represents horizontal velocity solution magnitude, the second row represents vertical velocity solution magnitude. From left to right: true, PSGLD-SA prediction, SGLD-SA prediction.}\label{fig:kle32}
\end{figure}

\begin{figure}[!hbt]
\begin{subfigure}[t]{0.9\textwidth}
	\centering
	\includegraphics[scale=0.7]{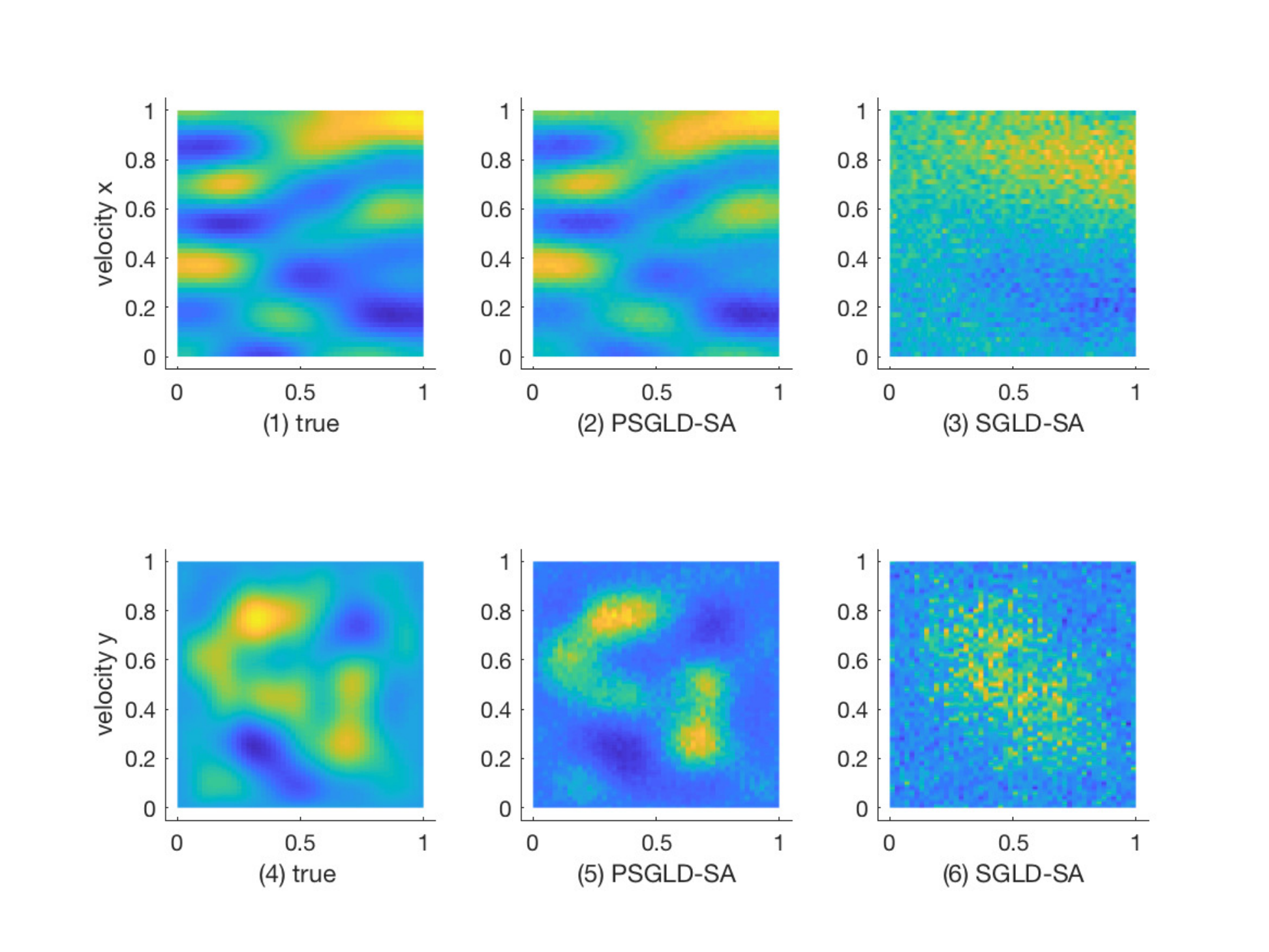}
	\caption{KLE 64, test sample 1}
\end{subfigure}%

\begin{subfigure}[t]{0.9\textwidth}
	\centering
	\includegraphics[scale=0.7]{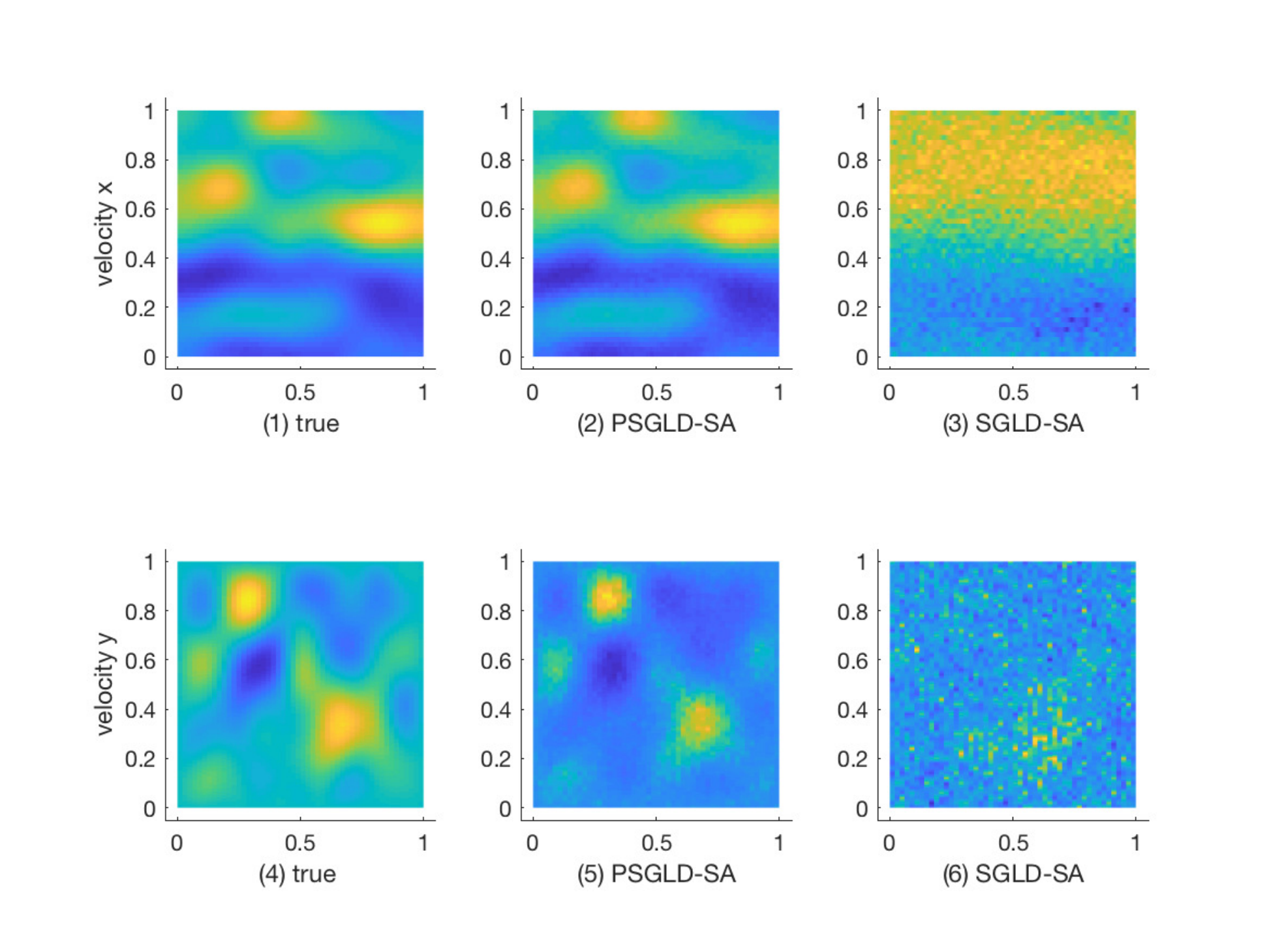}	
	\caption{KLE 64, test sample 2}
\end{subfigure}

\caption{KLE 64. True and prediction solutions. In each sub-figure, the first row represents horizontal velocity solution magnitude, the second row represents vertical velocity solution magnitude. From left to right: true, PSGLD-SA prediction, SGLD-SA prediction.}\label{fig:kle64}
\end{figure}

\begin{figure}[!hbt]
	\begin{subfigure}[t]{0.9\textwidth}
		\centering
		\includegraphics[scale=0.7]{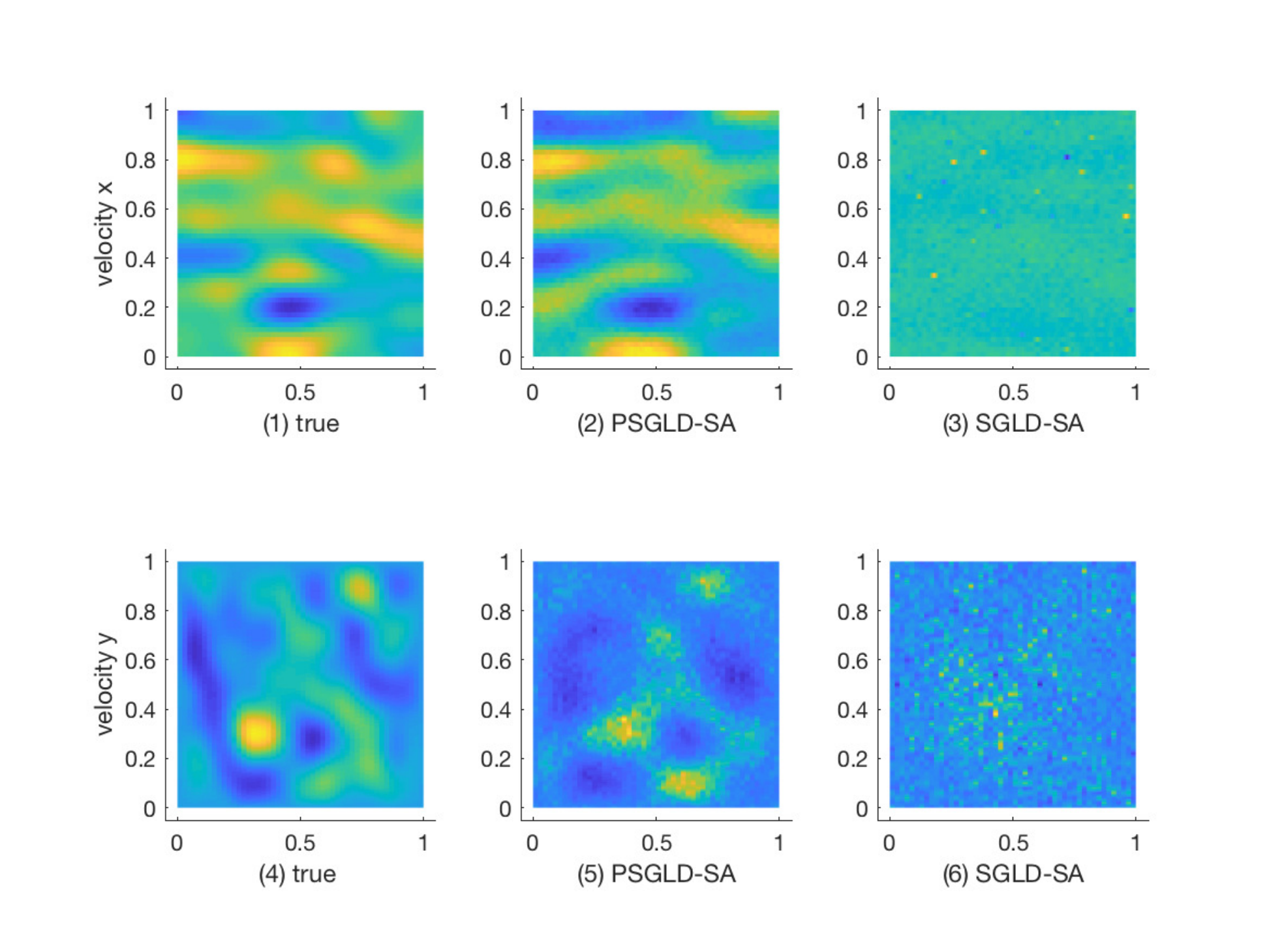}
		\caption{KLE 128, test sample 1}
	\end{subfigure}%
	
	\begin{subfigure}[t]{0.9\textwidth}
		\centering
		\includegraphics[scale=0.7]{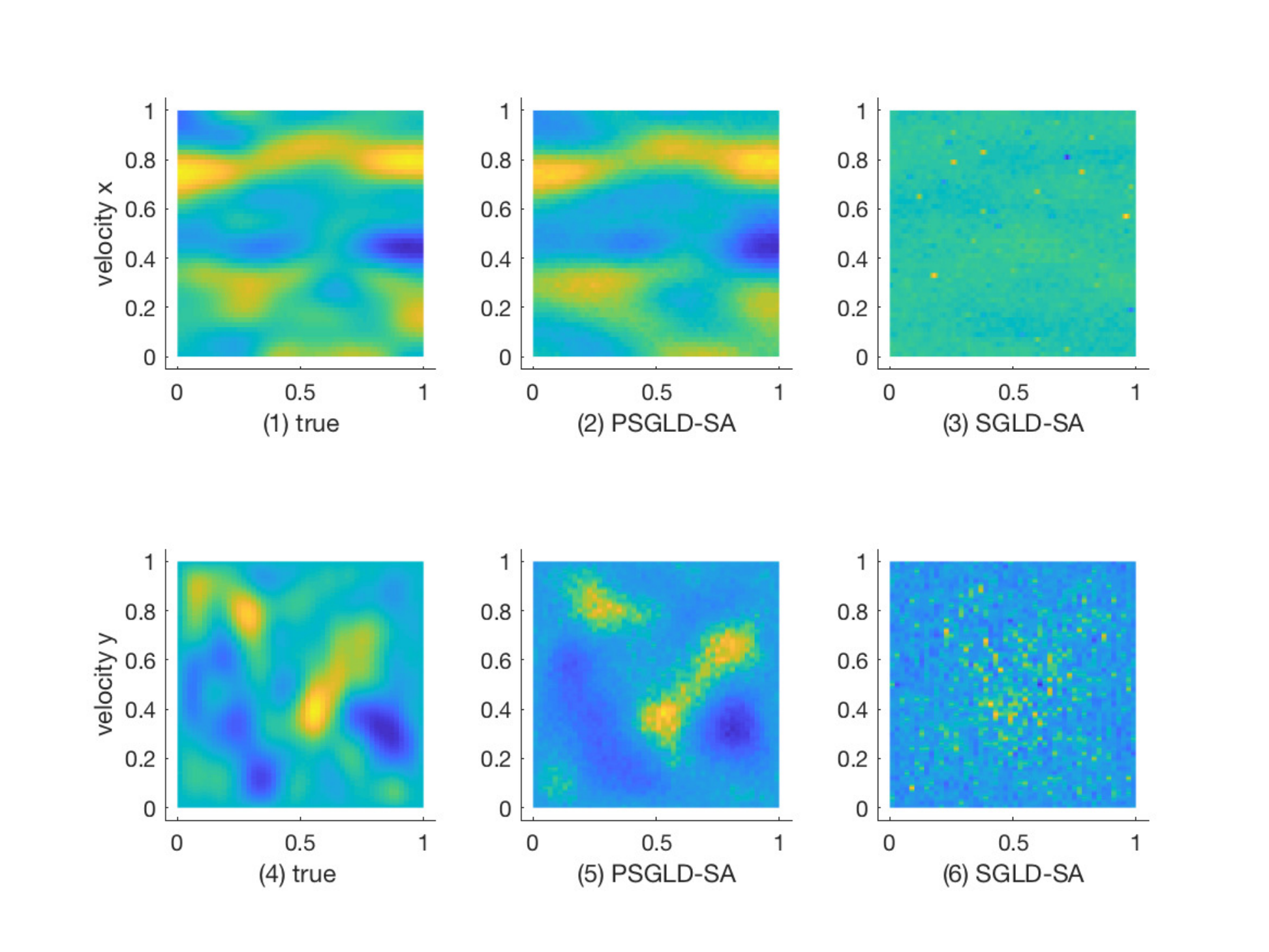}	
		\caption{KLE 128, test sample 2}
	\end{subfigure}
	\caption{KLE 128. True and prediction solutions. In each sub-figure, first row represents horizontal velocity solution magnitude, second row represents vertical velocity solution magnitude. From left to right: true, PSGLD-SA prediction, SGLD-SA prediction.} \label{fig:kle128}
\end{figure}

%
%
%
%

\subsection{Elliptic problem with channelized media}
Last, we employ the proposed algorithm to predict the solution of an elliptic problem with channelized media. The problem setup is the same as in section \ref{sec:num_ex2}. However, the background permeability fields are images of channelized media. The image size for our problem is $50\times 50$, which are patches cropped from the channelized media in \cite{laloy2018training}.
An illustration of the permeability data and corresponding solutions are presented in Figure \ref{fig:channel}. We generate $3000$ data pairs and randomly split them into $80\%$ and $20\%$ for training and testing purposes respectively.

\begin{figure}[!hbt]
	\begin{subfigure}[t]{1.0\textwidth}
		\centering
		\includegraphics[width=.24\textwidth]{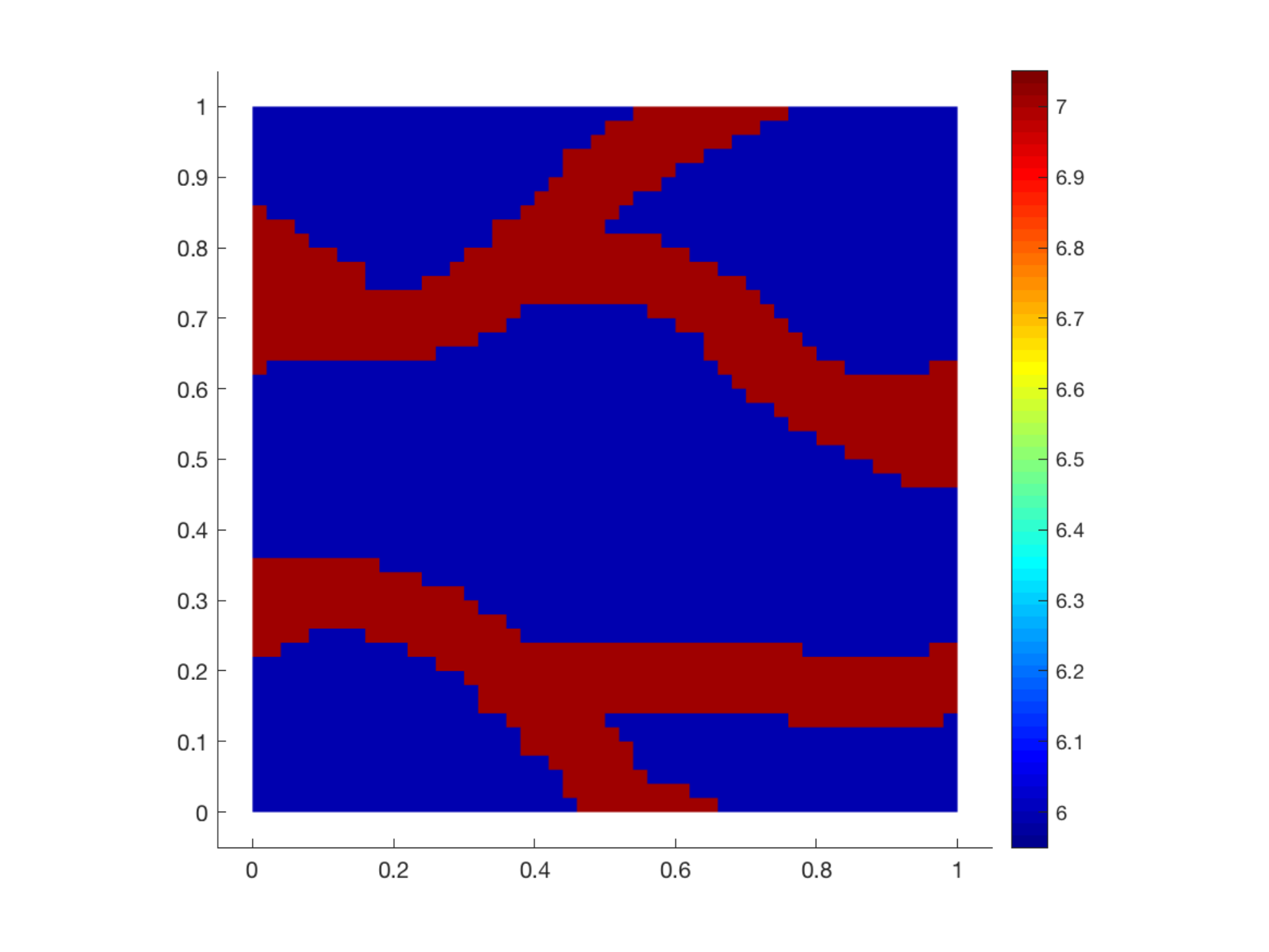} \hfill
		\includegraphics[width=.24\textwidth]{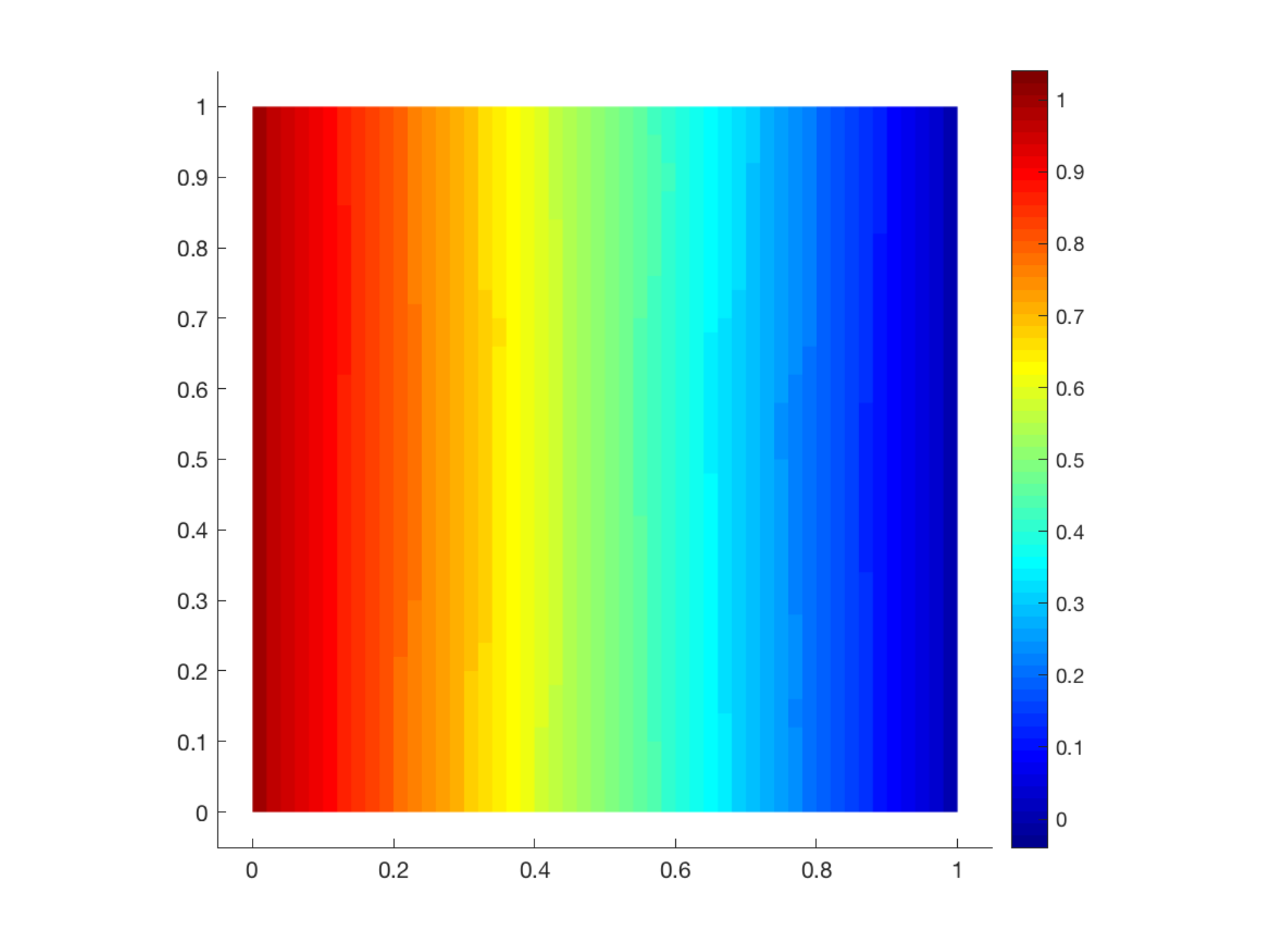}\hfill
		\includegraphics[width=.24\textwidth]{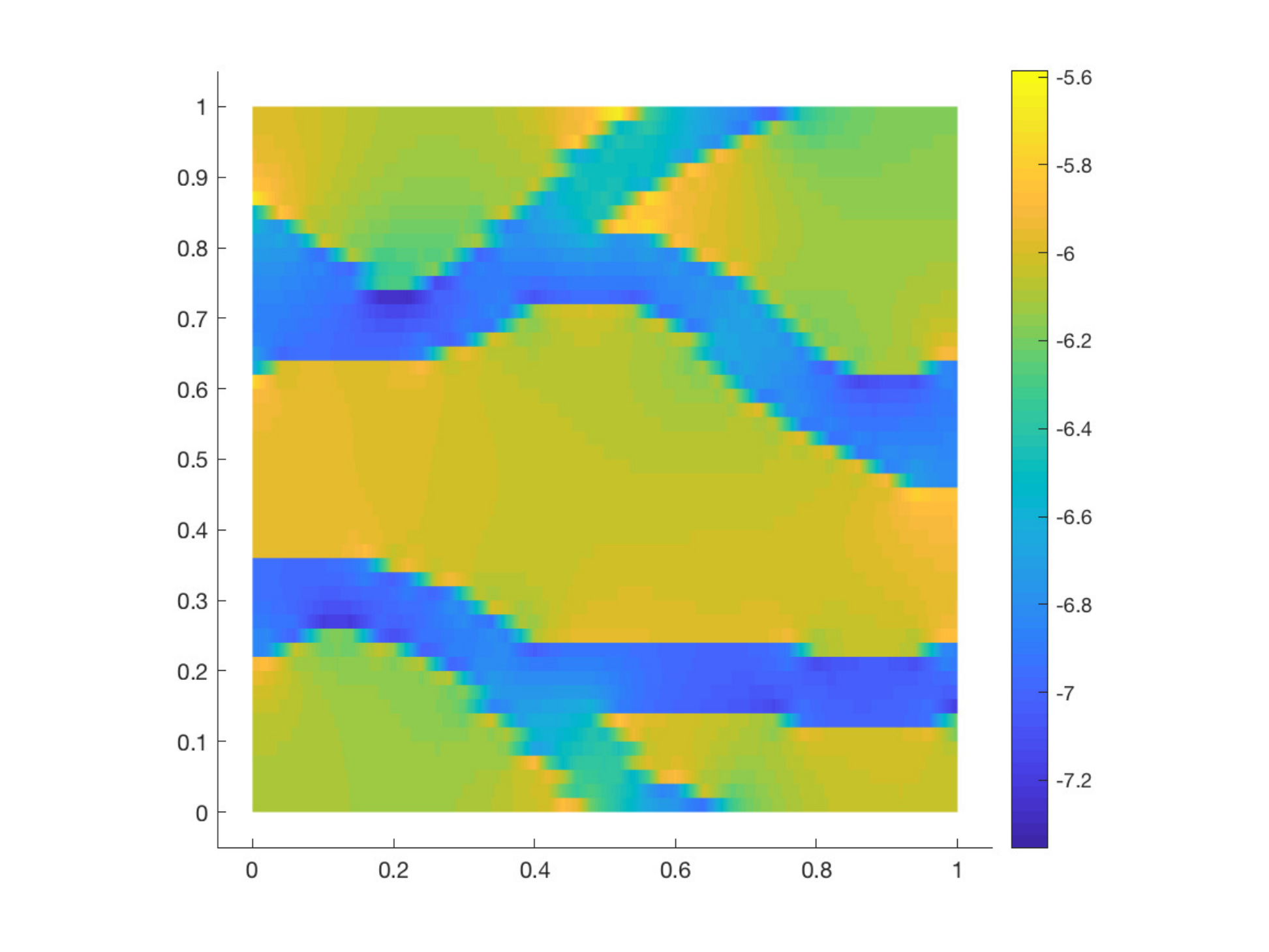}	\hfill	
		\includegraphics[width=.24\textwidth]{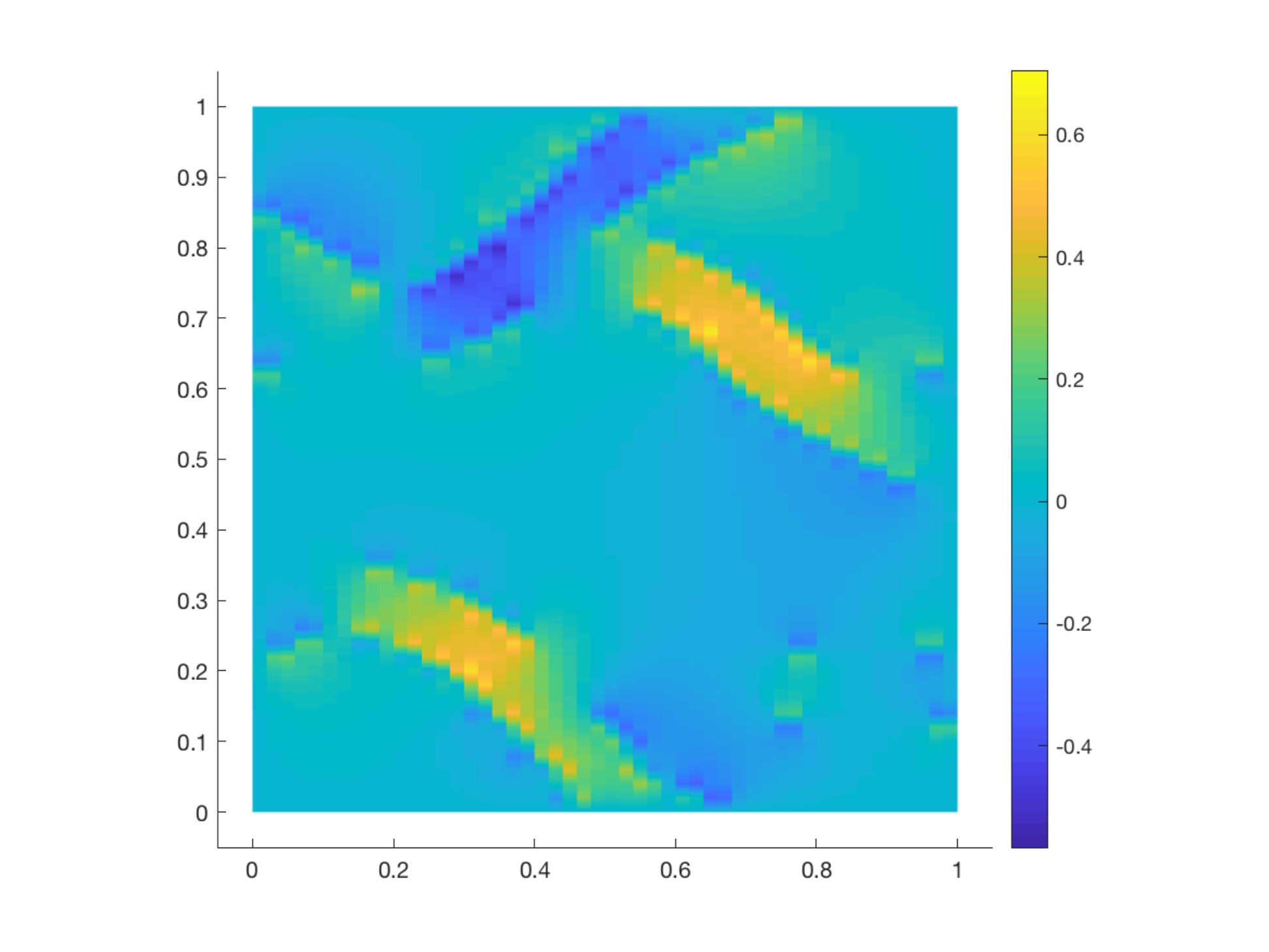}\hfill
		\caption{Illustration example 1}
	\end{subfigure}
			\begin{subfigure}[t]{1.0\textwidth}
		\centering
		\includegraphics[width=.24\textwidth]{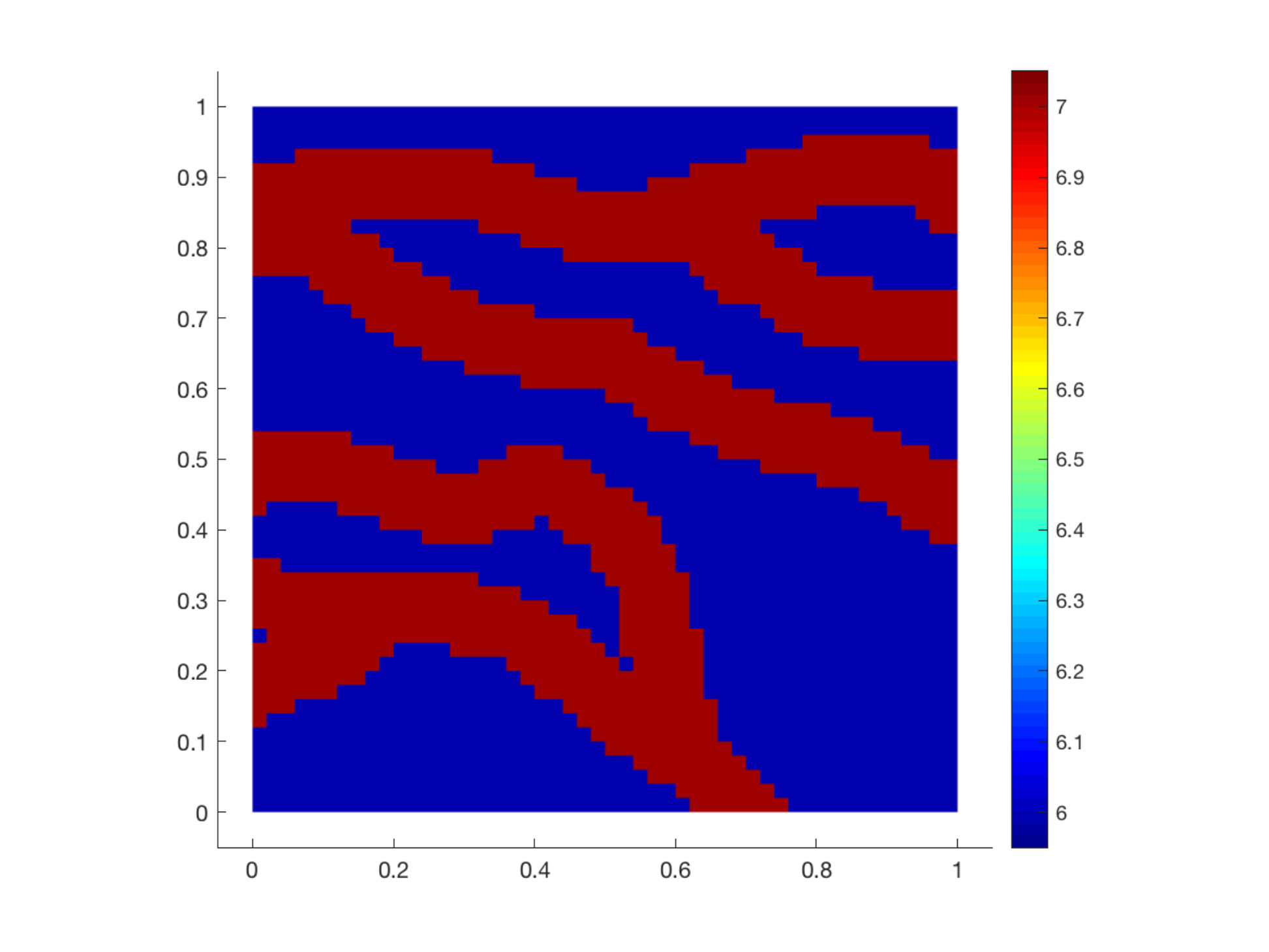} \hfill
		\includegraphics[width=.24\textwidth]{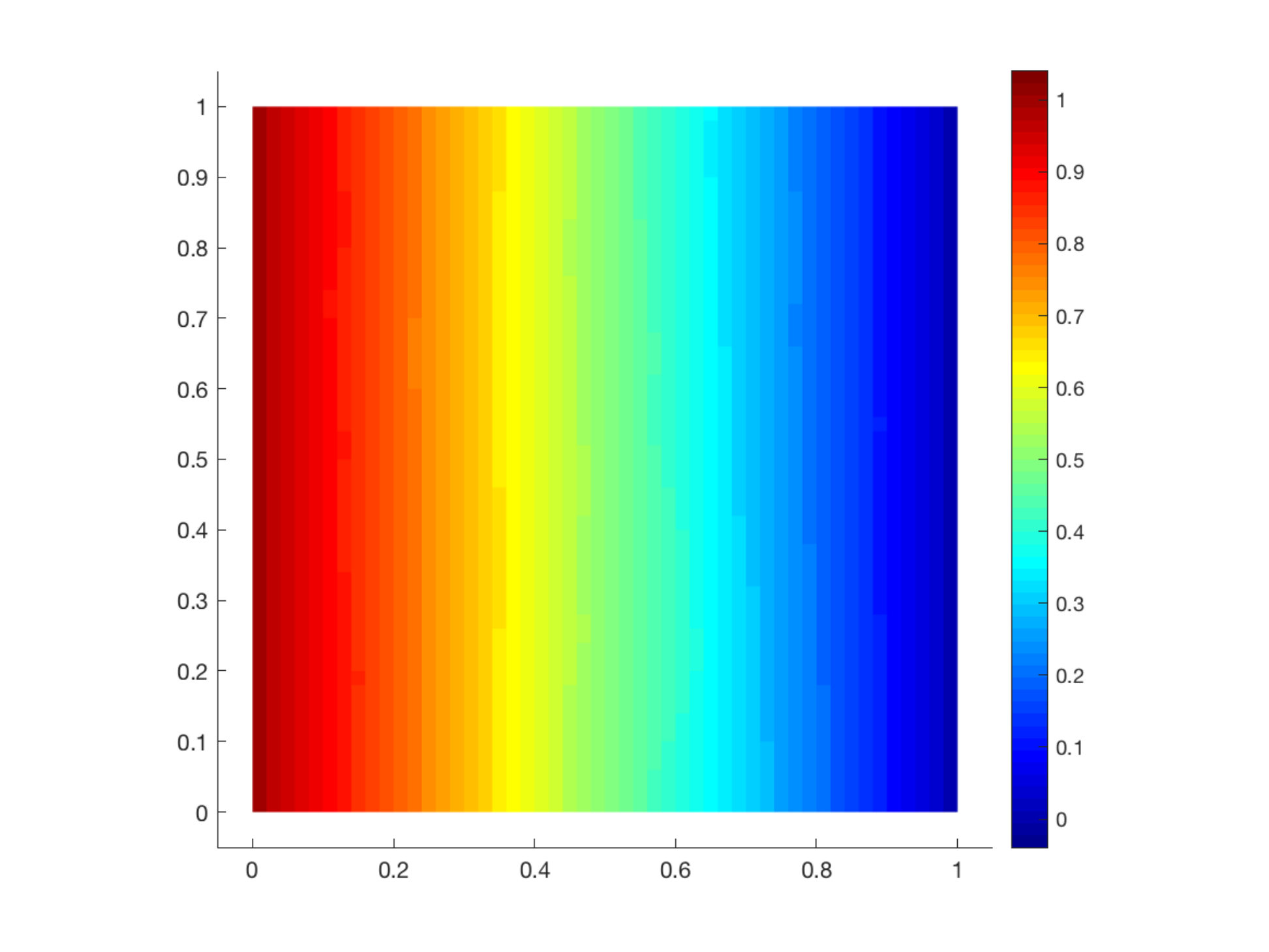}\hfill
		\includegraphics[width=.24\textwidth]{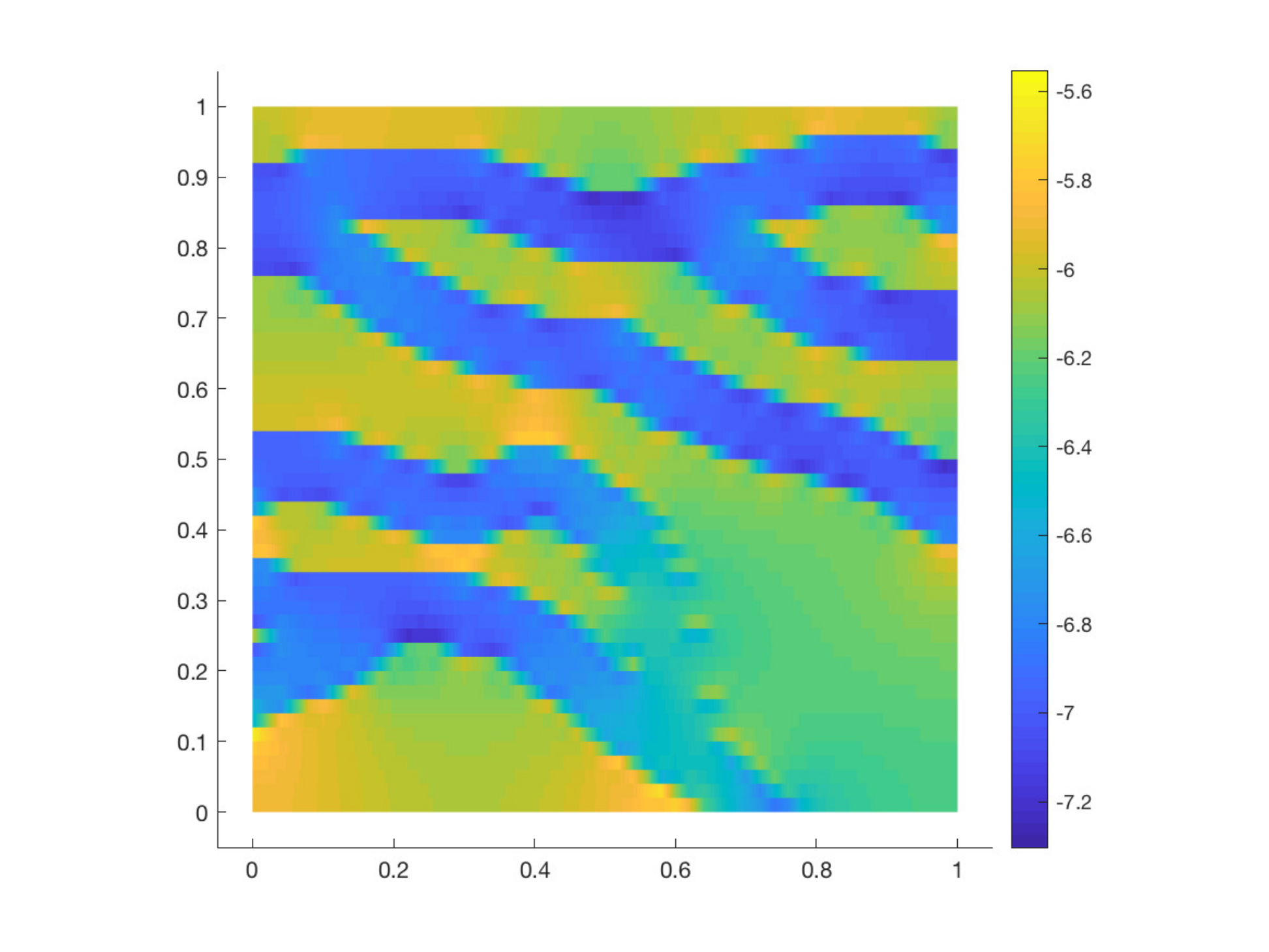}	\hfill	
		\includegraphics[width=.24\textwidth]{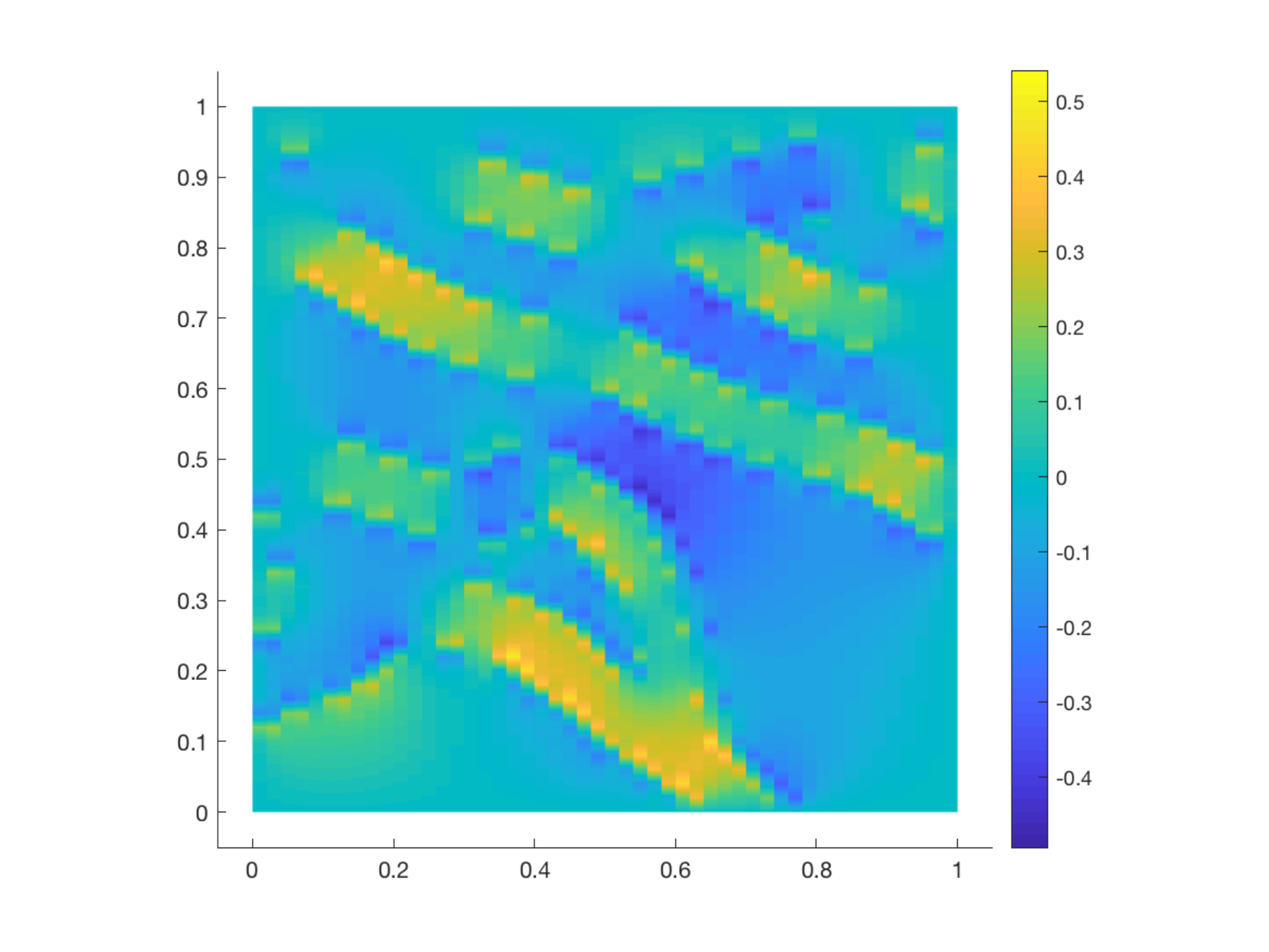}\hfill
		\caption{Illustration example 2}
	\end{subfigure}
	\caption{Illustrations of channelized permeability fields and corresponding solutions. In each subplot, permeability (upper left), pressure solution (upper right), horizontal velocity magnitude (lower left) and vertical velocity magnitude (lower right).}\label{fig:channel}
\end{figure}

We set the sparse rate to be $30\%$, $50\%$, and $70\%$. The performance of our proposed method PSGLD-SA compared with vanilla SGLD, vanilla PSGLD, and SGLD-SA is shown in Table \ref{tab:example3}. We see that PSGLD-SA outperforms SGLD-SA in all three sparse cases. PSGLD-SA also results in more accurate results compared with vanilla PSGLD. The test learning curves are presented in Figure \ref{fig:channel_learning_curves}. It shows that preconditioning helps to improve convergence speed. With stochastic approximation, PSGLD-SA provides better results compared with PSGLD. Some samples are presented in Figure \ref{fig:example3}. It is clear that predicted velocity solutions using PSGLD-SA captures the heterogeneities in the underlying problem, and look very close to the true solutions. However, the predicted solutions obtained from SGLD-SA are not reliable.

\begin{table}[!htb]
	\centering
	\begin{tabular}{|c  |c  | c  | }
		\hline
    &	PSGLD ($e_1$/ $e_2$ \% )& 	SGLD ($e_1$/ $e_2$ \% )  \\  \hline
     Dense &3.31/2.73  &6.39/ 4.94  \\  \hline
	&	PSGLD-SA ($e_1$/ $e_2$ \% )& 	SGLD-SA ($e_1$/ $e_2$ \% )  \\  \hline
    Sparse rate 30\% &2.73/2.13  &3.94/ 3.12  \\  \hline
	Sparse rate 50\% & 2.75/2.16 &3.91 /3.05     \\  \hline
	Sparse rate 70\%  &2.71/1.95  &4.46/3.57      \\  \hline
	\end{tabular}
	\caption{Channelized permeability fields. Mean errors between the true and predicted velocity using proposed SGLD, PSGLD, SGLD-SA, and PSGLD-SA. Mean errors among $600$ testing samples.}\label{tab:example3}
\end{table}

\begin{figure}[!ht]
	\centering
		\includegraphics[scale=0.3]{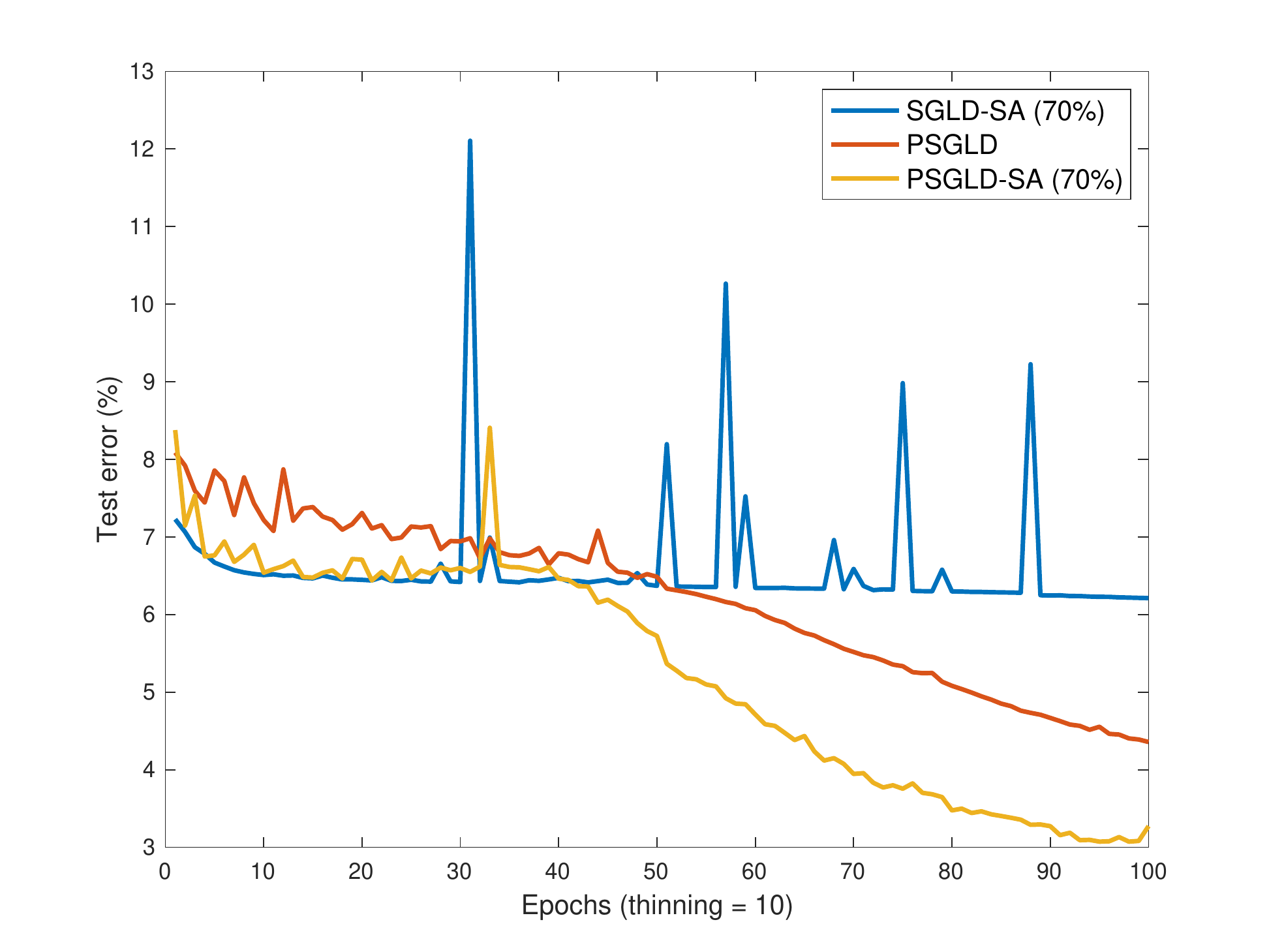}
	\caption{Channelized media, learning curves. Comparison among test errors SGLD with sparse approximation, vanilla PSGLD, and PSGLD with sparse approximation.}\label{fig:channel_learning_curves}
\end{figure}

\begin{figure}[!ht]
	\begin{subfigure}[t]{0.9\textwidth}
		\centering
		\includegraphics[scale=0.7]{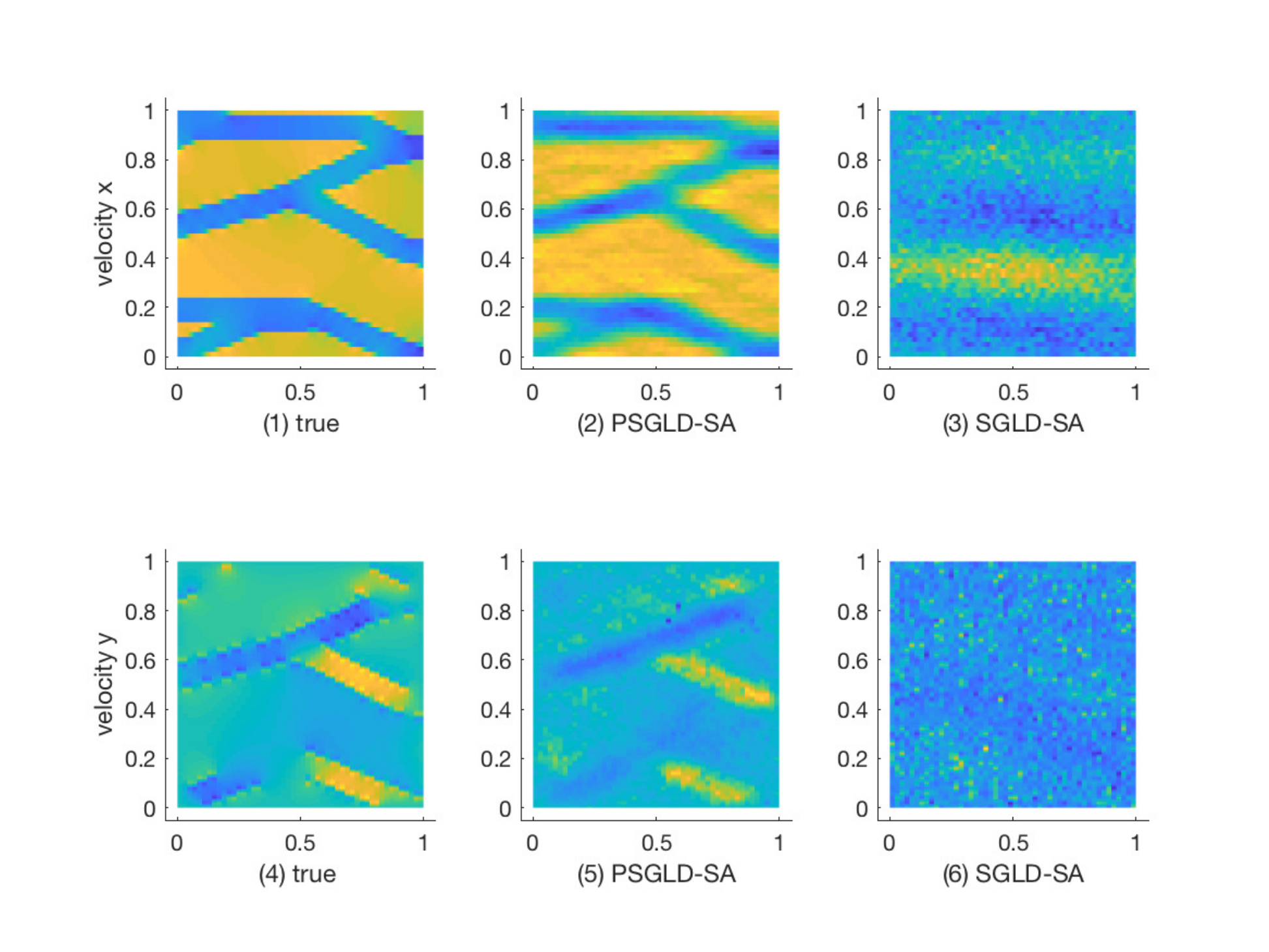}
		\caption{Channelized permeability field, test sample 1,sparse rate 50\%.}
	\end{subfigure}%
	
	\begin{subfigure}[t]{0.9\textwidth}
		\centering
		\includegraphics[scale=0.7]{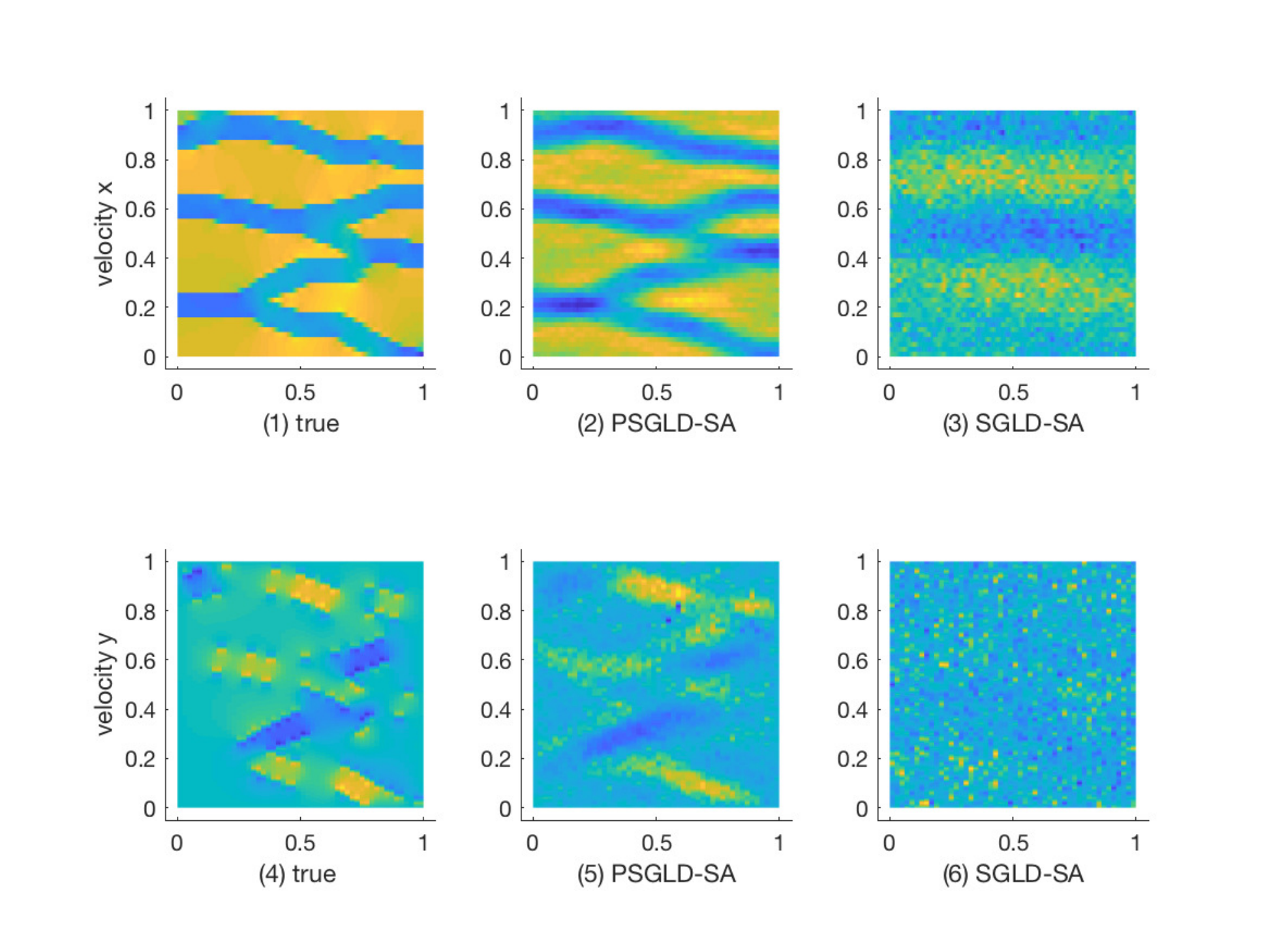}	
		\caption{Channelized permeability field, test sample 2, sparse rate 70\%.}
	\end{subfigure}
	\caption{Channelized permeability field. True and prediction solutions. In each sub-figure, first row represents horizontal velocity solution magnitude, second row represents vertical velocity solution magnitude.} \label{fig:example3}
\end{figure}

\section{Conclusion}\label{sec:conclusion}
We proposed a Bayesian sparse learning algorithm, where the model parameters are adaptively trained from a Bayesian mixture deep neural network, and the latent variables are smoothly learned through optimization. The Bayesian hierarchical model adopts SSGL priors, and samples are generated from the posterior using preconditioned Stochastic gradient descent Markov Chain Monte Carlo (PSGLD). PSGLD incorporates local curvature information in parameter updating scheme, such that constant step size is adequate and slow mixing can be avoided. Due to the diagonal form of the preconditioning matrix, PSGLD needs less computational and storage cost compared to SGRLD. Moreover, we apply stochastic approximation techniques in the sequentially updated preconditioning matrix, the bias on the MSE introduced due to ignoring a correction term will approach zero. The convergence of the proposed algorithm is discussed. Numerical simulations are performed to learn the solutions of elliptic PDE with heterogeneous coefficients. Sparse learning with preconditioned SGLD sampling algorithm is shown to be helpful to accelerate the learning process and the trained sparse models which can be used as computational efficient surrogates for solving the underlying PDE. 
The algorithm can also be extended to solve other heterogeneous problems, and applied to the multi-fidelity framework. Moreover, we may construct appropriate network structure and enforce sparsity according to physical information, such that we can interpret the sparse network obtained physically.

\section*{Acknowledgement}
We gratefully acknowledge the support from the National Science Foundation (DMS-1555072, DMS-1736364, CMMI-1634832, and CMMI-1560834), and Brookhaven National Laboratory Subcontract 382247. The authors would also like to acknowledge the support from NVIDIA Corporation for the donation of the Titan Xp GPU used for this research.

\bibliographystyle{siam} 
\bibliography{references.bib}

\begin{appendices}

\section{Convergence of latent variables}\label{app:theta}

\begin{assumption}
	The step size $\{\omega_k\}$ in the update formula for latent variables satisfies $	\sum_{k=1}^{\infty} \omega_k  = +\infty, \; \sum_{k=1}^{\infty} \omega_k^2  < +\infty $, moreover,
	\begin{equation*}
	{\lim \inf}_{k\rightarrow \infty} 2\delta \frac{\omega_k}{\omega_{k+1}} + \frac{\omega_{k+1}-\omega_k}{\omega_{k+1}^2} > 0 
	\end{equation*}
\end{assumption}
In practice, one can choose $\omega_{k} = c_1(k+c_2)^{-\gamma}$ for $\gamma \in (0,1])$ and constants $c_1, c_2$.

\begin{assumption} \label{assumption:poisson}
	For all $\boldsymbol \theta\in \Theta$, there exists a function $\mu_{\boldsymbol \theta}(\boldsymbol \beta)$ that solves the Poisson equation 
	$\mu_{\boldsymbol \theta}(\boldsymbol \beta) - \Pi_{\boldsymbol \theta} \mu_{\boldsymbol \theta}(\boldsymbol \beta) = H(\boldsymbol \theta,\boldsymbol \beta) - h(\boldsymbol \theta)$. 
	There exists a constant $C$ such that
	\begin{align*}
	\mathbb{E} ||  \Pi_{\boldsymbol \theta} \mu_{\boldsymbol \theta}(\boldsymbol \beta) || &\leq C\\
	\mathbb{E} ||  \Pi_{\boldsymbol \theta} \mu_{\boldsymbol \theta}(\boldsymbol \beta) - \Pi_{{\boldsymbol \theta}'} \mu_{{\boldsymbol \theta}'}(\boldsymbol \beta)||  &\leq C ||\boldsymbol \theta-{\boldsymbol \theta}'||
	\end{align*}
\end{assumption}

\begin{lemma}\label{lemma:sequence}
	There exists $\lambda_0$ and $k_0$ such that $\forall \lambda \geq\lambda_0$ and $\forall k \geq k_0$, the sequence $\displaystyle{ \{\psi_k \}_{k=1}^{\infty} }$  with $\displaystyle{ \psi_k = \lambda \omega_k + 2C_2/\delta \sup_{i\geq k_0}  \triangle_i}$ satisfies
	\begin{equation}\label{eq:psi}
	\psi_{k+1} \geq (1-2\delta\omega_{k+1} + C_1\omega_{k+1}^2 )	\psi_k+ C_1 \omega_{k+1}^2 + 2C_2 \triangle_k  \omega_{k+1}   
	\end{equation}
\end{lemma}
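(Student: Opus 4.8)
The plan is to establish the inequality \eqref{eq:psi} by choosing $\lambda_0$ and $k_0$ large enough that the candidate sequence $\psi_k = \lambda\omega_k + (2C_2/\delta)\sup_{i\geq k_0}\triangle_i$ absorbs all the lower-order terms on the right-hand side. The key observation is that $\psi_k$ splits into a vanishing part $\lambda\omega_k$ and a constant part $\Psi := (2C_2/\delta)\sup_{i\geq k_0}\triangle_i$, and I would verify \eqref{eq:psi} separately on these two pieces and then add. Substituting and cancelling, \eqref{eq:psi} is equivalent to
\begin{equation*}
\lambda\omega_{k+1} + \Psi \;\geq\; (1-2\delta\omega_{k+1}+C_1\omega_{k+1}^2)(\lambda\omega_k+\Psi) + C_1\omega_{k+1}^2 + 2C_2\triangle_k\omega_{k+1},
\end{equation*}
so it suffices to show $\lambda\omega_{k+1}\geq (1-2\delta\omega_{k+1}+C_1\omega_{k+1}^2)\lambda\omega_k + C_1\omega_{k+1}^2 + [\,2C_2\triangle_k\omega_{k+1} - 2\delta\omega_{k+1}\Psi + C_1\omega_{k+1}^2\Psi\,]$, using $\Psi\geq\Psi\cdot(1-2\delta\omega_{k+1}+C_1\omega_{k+1}^2)$ only when the quadratic factor is $\le 1$, which holds for $k\geq k_0$ with $k_0$ chosen so that $\omega_{k+1}\leq 2\delta/C_1$.

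First I would handle the constant part: since $\triangle_k\leq\sup_{i\geq k_0}\triangle_i$, we have $2C_2\triangle_k\omega_{k+1}\leq \delta\omega_{k+1}\Psi$, so the bracketed remainder is bounded above by $-\delta\omega_{k+1}\Psi + C_1\omega_{k+1}^2\Psi$, which is $\leq 0$ for $k\geq k_0$ once $\omega_{k+1}\leq\delta/C_1$. Next I would handle the $\lambda\omega_k$ part: the requirement reduces to $\lambda\omega_{k+1}\geq\lambda\omega_k(1-2\delta\omega_{k+1}+C_1\omega_{k+1}^2) + C_1\omega_{k+1}^2$, i.e.
\begin{equation*}
\lambda\left(\frac{\omega_{k+1}-\omega_k}{\omega_{k+1}^2} + 2\delta\frac{\omega_k}{\omega_{k+1}} - C_1\omega_k\right)\;\geq\; C_1.
\end{equation*}
By Assumption 1, $\liminf_{k\to\infty}\big(2\delta\frac{\omega_k}{\omega_{k+1}} + \frac{\omega_{k+1}-\omega_k}{\omega_{k+1}^2}\big)>0$, and since $\omega_k\to 0$ the term $-C_1\omega_k$ is negligible, so there is $k_0$ and a constant $c>0$ with the parenthesized quantity $\geq c$ for all $k\geq k_0$; then choosing $\lambda_0 = C_1/c$ makes the inequality hold for every $\lambda\geq\lambda_0$. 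Combining the two parts gives \eqref{eq:psi}.

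The main obstacle is bookkeeping the order of quantifiers: $k_0$ appears both as the threshold beyond which the recursion is claimed and inside the definition of $\psi_k$ through $\sup_{i\geq k_0}\triangle_i$, so I must fix $k_0$ first (large enough that $\omega_{k+1}\leq\delta/C_1$ and that the Assumption-1 liminf bound is active with margin $c$), then define $\Psi$ and $\psi_k$ in terms of that $k_0$, and only then pick $\lambda_0=C_1/c$. A minor subtlety is ensuring the quadratic factor $1-2\delta\omega_{k+1}+C_1\omega_{k+1}^2$ stays in $[0,1]$ so that the monotonicity step $\Psi\cdot(\text{factor})\le\Psi$ is valid; this is automatic once $\omega_{k+1}$ is small. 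No delicate estimates are needed beyond these elementary manipulations.
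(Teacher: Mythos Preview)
Your proposal is correct and follows essentially the same argument as the paper: substitute $\psi_k$, exploit $\triangle_k\le\sup_{i\ge k_0}\triangle_i$ to kill the $\triangle$-terms, and then invoke Assumption~1's $\liminf$ condition together with $\omega_k\to 0$ to choose $k_0$ and $\lambda_0$. Your only cosmetic difference is that you split $\psi_k=\lambda\omega_k+\Psi$ and handle the two pieces separately (which incidentally yields a $\lambda_0=C_1/c$ independent of $\sup_{i\ge k_0}\triangle_i$), whereas the paper rearranges the full inequality at once and ends up with $\lambda_0=(4C_1C_2\sup_{i\ge k_0}\triangle_i+2C_1)/M_1$; the underlying mechanism is identical.
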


\begin{proof}
	Plug in $\displaystyle{ \psi_k = \lambda \omega_k + 2C_2/\delta \sup_{i\geq k_0}  \triangle_i}$ in equation \eqref{eq:psi}, 
		\begin{equation*}
	(\lambda \omega_{k+1} + 2C_2/\delta \sup_{i\geq k_0}  \triangle_i) \geq (1-2\delta \omega_{k+1} +C_1\omega_{k+1}^2 )	(\lambda \omega_k + 2C_2/\delta \sup_{i\geq k_0}  \triangle_i)+ C_1\omega_{k+1}^2 + 2C_2\triangle_k  \omega_{k+1} 
	\end{equation*}
	Rearranging terms, we need to show
			\begin{equation*}
\lambda	( \omega_{k+1} -\omega_k + 2 \delta \omega_k \omega_{k+1} - C_1\omega_k \omega_{k+1}^2 ) \geq (-2\delta\omega_{k+1} +  C_1\omega_{k+1}^2)	  2C_2/\delta \sup_{i\geq k_0}  \triangle_i + C_1\omega_{k+1}^2 + 2C_2\triangle_k  \omega_{k+1} 
	\end{equation*}
	That is,
\begin{equation}\label{eq:seq2}
	\lambda	( 2\delta \frac{\omega_k}{\omega_{k+1}} + \frac{\omega_{k+1}-\omega_k}{\omega_{k+1}^2}  - C_1\omega_k) \omega_{k+1}^2 \geq   \omega_{k+1}^2(  C_1 + 2C_1C_2/\delta \sup_{i\geq k_0}  \triangle_i) - (\sup_{i\geq k_0}  \triangle_i -\triangle_k   )2 C_2\omega_{k+1}
	\end{equation}
	Let $\displaystyle{ M_1 = 	{\lim \inf}_{k\rightarrow \infty} 2\frac{\omega_k}{\omega_{k+1}} + \frac{\omega_{k+1}-\omega_k}{\omega_{k+1}^2} } $, we see the left hand side of \eqref{eq:seq2} is greater than equal to $\displaystyle{ \lambda	( M_1 - C_1\omega_k) \omega_{k+1}^2 }$. And use the fact that $\sup_{i\geq k_0}  \triangle_i - \triangle_k \geq 0$, we have the right hand side of \eqref{eq:seq2} is less than equal to $\omega_{k+1}^2(  C_1 + 2C_1C_2/\delta \sup_{i\geq k_0}  \triangle_i)$. Now it is suffices to show that 
	\begin{equation}\label{eq:seq3}
	\lambda	( M_1 - C_1\omega_k) \omega_{k+1}^2 \geq   \omega_{k+1}^2(  C_1 + 2C_1C_2 \sup_{i\geq k_0}  \triangle_i)
	\end{equation}
	By choosing $\lambda_0$ and $k_0$ such that $\displaystyle{\omega_{k_0} \leq \frac{M_1}{2C_1}}$, and $\displaystyle{\lambda_0 = \frac{4C_1C_2 \sup_{i\geq k_0}  \triangle_i +2 C_1}{M_1}}$, \eqref{eq:seq2} holds, thus the desired inequality \eqref{eq:psi} holds.

\end{proof}

\begin{theorem} \label{thm:main_G}
	
	Suppose Assumptions 1-2 hold, with assumptions and Lemmas 1-2, Propositions 1-3 in \cite{sgld-sa}, we have the sequence $\{{\boldsymbol \theta}_k\}_{k=1}^{\infty}$ converge to ${\boldsymbol \theta}_*$, and there exist a sufficiently large $k_0$ such that
	\begin{equation*}
	\mathbb{E}||\boldsymbol \theta_k - \boldsymbol \theta_*||^2 = \mathcal{O} (\lambda \omega_k + \sup_{i\geq k_0}	\mathbb{E} ||\Delta(n, \boldsymbol \theta_i,\boldsymbol \beta_{i+1})||) 
	\end{equation*}
	
\end{theorem}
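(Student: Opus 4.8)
The plan is to follow the standard Robbins--Monro stochastic approximation argument, as adapted in \cite{sgld-sa}, and to track how the extra bias term $\Delta(n,\boldsymbol\theta_i,\boldsymbol\beta_{i+1})$ propagates through the recursion. First I would write $T_k := \boldsymbol\theta_k - \boldsymbol\theta_*$ and expand $\mathbb{E}\|T_{k+1}\|^2$ using the update $\boldsymbol\theta_{k+1} = \boldsymbol\theta_k + \omega_{k+1}\tilde H(\boldsymbol\theta_k,\boldsymbol\beta_{k+1})$ with $\tilde H = H + \Delta$. This produces the three usual contributions: a contraction term coming from $\langle T_k, h(\boldsymbol\theta_k)\rangle$, which by the stability/convexity-type condition on the mean field $h$ (Proposition/Assumption in \cite{sgld-sa}) is bounded above by $-2\delta\omega_{k+1}\|T_k\|^2$; a second-order term $\omega_{k+1}^2\,\mathbb{E}\|\tilde H\|^2$, which is $O(\omega_{k+1}^2)$ after using $\mathbb{E}\|\Delta\|^2\le C^2$ and the moment bounds on $H$; and the cross terms involving the noise $H(\boldsymbol\theta_k,\boldsymbol\beta_{k+1})-h(\boldsymbol\theta_k)$ and the bias $\Delta$.

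The martingale-like noise term $H-h$ is handled exactly as in \cite{sgld-sa}: invoke Assumption~\ref{assumption:poisson} to introduce the solution $\mu_{\boldsymbol\theta}$ of the Poisson equation, do an Abel summation / telescoping decomposition to convert $\langle T_k, H-h\rangle$ into terms that are either genuine martingale increments (zero conditional expectation) or are controlled by $\|T_{k+1}-T_k\| = O(\omega_{k+1})$ and $\|\boldsymbol\theta_{k+1}-\boldsymbol\theta_k\|$, together with the Lipschitz bound $\mathbb{E}\|\Pi_{\boldsymbol\theta}\mu_{\boldsymbol\theta}-\Pi_{\boldsymbol\theta'}\mu_{\boldsymbol\theta'}\|\le C\|\boldsymbol\theta-\boldsymbol\theta'\|$. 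The new ingredient is the bias: the cross term $2\omega_{k+1}\mathbb{E}\langle T_k,\Delta\rangle$ is bounded by $2\omega_{k+1}\mathbb{E}\|T_k\|\,\mathbb{E}\|\Delta\| \le \delta\omega_{k+1}\mathbb{E}\|T_k\|^2 + \delta^{-1}\omega_{k+1}\,(\sup_{i\ge k_0}\mathbb{E}\|\Delta_i\|)^2$ (Cauchy--Schwarz plus Young), so half of it is absorbed into the contraction and the remainder feeds a constant-order forcing term proportional to $\triangle_k := \mathbb{E}\|\Delta(n,\boldsymbol\theta_k,\boldsymbol\beta_{k+1})\|$. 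Collecting everything yields a scalar recursion of the form
\begin{equation*}
\mathbb{E}\|T_{k+1}\|^2 \le (1 - 2\delta\omega_{k+1} + C_1\omega_{k+1}^2)\,\mathbb{E}\|T_k\|^2 + C_1\omega_{k+1}^2 + 2C_2\triangle_k\,\omega_{k+1},
\end{equation*}
which is precisely the hypothesis of Lemma~\ref{lemma:sequence}.

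Finally I would close the induction: Lemma~\ref{lemma:sequence} supplies a comparison sequence $\psi_k = \lambda\omega_k + (2C_2/\delta)\sup_{i\ge k_0}\triangle_i$ that dominates the recursion for $k\ge k_0$ and $\lambda\ge\lambda_0$, so by induction $\mathbb{E}\|T_k\|^2 \le \psi_k$ for all $k\ge k_0$, giving $\mathbb{E}\|\boldsymbol\theta_k-\boldsymbol\theta_*\|^2 = O(\lambda\omega_k + \sup_{i\ge k_0}\mathbb{E}\|\Delta(n,\boldsymbol\theta_i,\boldsymbol\beta_{i+1})\|)$, as claimed. Convergence $\boldsymbol\theta_k\to\boldsymbol\theta_*$ (in the appropriate sense, up to the controllable bias) then follows from $\omega_k\to 0$. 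The main obstacle I anticipate is not any single estimate but the bookkeeping in the Poisson-equation/Abel-summation step — making sure every term generated by the resummation is either a mean-zero martingale increment or is genuinely $O(\omega_{k+1}^2)$ or $O(\triangle_k\omega_{k+1})$, and in particular that the presence of $\Delta$ inside $\boldsymbol\theta_{k+1}-\boldsymbol\theta_k$ does not create a term of order $\omega_{k+1}\triangle_k$ with the wrong sign or a term of order $\triangle_k$ without an accompanying $\omega_{k+1}$. I would also need to double-check that the drift-stability and moment assumptions imported from \cite{sgld-sa} still hold in the preconditioned setting, i.e. that boundedness of $G(\boldsymbol\beta)$ and the $k^{-\gamma}$-decaying $\Gamma$-correction do not disturb the hypotheses under which $H$ and $\mu_{\boldsymbol\theta}$ were controlled.
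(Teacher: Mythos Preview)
Your overall strategy matches the paper's, and most of the outline is sound. There is, however, one genuine oversimplification that becomes a gap. After applying the Poisson equation to $H-h$, it is \emph{not} true that every remaining piece is either a martingale increment or controlled by $\|\boldsymbol\theta_{k+1}-\boldsymbol\theta_k\|=O(\omega_{k+1})$. The decomposition leaves a residual
\[
\langle E_k,\ \Pi_{\boldsymbol\theta_{k-1}}\mu_{\boldsymbol\theta_{k-1}}(\boldsymbol\beta_k)-\Pi_{\boldsymbol\theta_k}\mu_{\boldsymbol\theta_k}(\boldsymbol\beta_{k+1})\rangle
\]
which involves the change in the \emph{sampler state} $\boldsymbol\beta_k\to\boldsymbol\beta_{k+1}$; Assumption~\ref{assumption:poisson} gives Lipschitz control only in $\boldsymbol\theta$, not in $\boldsymbol\beta$. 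This term is bounded but not $O(\omega_{k+1})$, so the one-step recursion you wrote down does not hold. The actual recursion carries an extra $2\omega_{k+1}\mathbb{E}[z_k-z_{k+1}]$ with $z_k=\langle E_k,\Pi_{\boldsymbol\theta_{k-1}}\mu_{\boldsymbol\theta_{k-1}}(\boldsymbol\beta_k)\rangle$, and Lemma~\ref{lemma:sequence} alone is not enough.

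The paper handles this by unrolling the recursion so that the $z$-contributions accumulate as $\sum_{j=k_0+1}^{k}\Lambda_j^k(z_{j+1}-z_j)$ with product coefficients $\Lambda_j^k=2\omega_j\prod_{i=j}^{k-1}(1-2\delta\omega_{i+1}+C_1\omega_{i+1}^2)$. One checks (using Assumption~1) that $\{\Lambda_j^k\}_j$ is increasing in $j$, applies Abel summation to shift the differences onto the $\Lambda$'s, and uses the uniform bound $\mathbb{E}|z_j|\le C_3$ to get $\bigl|\sum_j\Lambda_j^k(z_{j+1}-z_j)\bigr|\le 2\Lambda_k^kC_3=4C_3\omega_k$. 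This extra $4C_3\omega_k$ is absorbed into the $\lambda\omega_k$ term of the final estimate. Your intuition that the Abel summation is the delicate step is correct, but it acts on the \emph{unrolled} sum, not inside a single step; the simple induction $\mathbb{E}\|T_k\|^2\le\psi_k$ you describe requires this additional argument before it closes.
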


	\begin{proof}
		Denote by $E_k = \boldsymbol \theta_k-\boldsymbol \theta_*$, we have
		\begin{equation}\label{eq:thm-error}
		||E_{k+1}||^2 = ||E_k||^2 + \omega_{k+1}^2|| \tilde{H}(\boldsymbol{\beta_{k+1}}, \boldsymbol \theta_k) ||^2  + 2\omega_{k+1} \mathbb{E} \langle E_k, \tilde{H}(\boldsymbol{\beta_{k+1}}, \boldsymbol \theta_k)  \rangle
		\end{equation}
		For the third term in \eqref{eq:thm-error}, we have
		\begin{align*} 
		&\langle E_k, \tilde{H}(\boldsymbol{\beta_{k+1}}, \boldsymbol \theta_k)  \rangle = \langle E_k, H(\boldsymbol{\beta_{k+1}}, \boldsymbol \theta_k)  + \Delta(n, \boldsymbol \theta_i,\boldsymbol \beta_{i+1})  \rangle\\
		\leq & -||E_k||^2 +   \langle E_k, \mu_{\boldsymbol \theta_k}(\boldsymbol \beta_{k+1}) - \Pi_{\boldsymbol \theta_k} \mu_{\boldsymbol \theta_k}(\boldsymbol \beta_{k}) \rangle  +  \langle E_k,\Pi_{\boldsymbol \theta_k} \mu_{\boldsymbol \theta_k}(\boldsymbol \beta_{k})- \Pi_{\boldsymbol \theta_{k-1}} \mu_{\boldsymbol \theta_{k-1}}(\boldsymbol \beta_k) \rangle  \\
		&  +  \langle E_k, \Pi_{\boldsymbol \theta_{k-1}} \mu_{\boldsymbol \theta_{k-1}}(\boldsymbol \beta_k)- \Pi_{\boldsymbol \theta_k} \mu_{\boldsymbol \theta_k}(\boldsymbol \beta_{k+1}) \rangle  +   ||E_k|| \triangle_k 
		\end{align*}
		where $||\Delta(n, \boldsymbol \theta_k,\boldsymbol \beta_{k+1})|| =  \triangle_k$.
		 Following a similar proof as in \cite{sgld-sa}, we have
		 \begin{equation*}
		    2\omega_{k+1} \mathbb{E} \langle E_k, \tilde{H}(\boldsymbol{\beta_{k+1}}, \boldsymbol \theta_k)  \rangle C_2 \omega_{k+1}
		 \end{equation*}
		 Thus,
		 	\begin{equation*}
			\mathbb{E} ||E_{k+1}||^2 \leq (1-2\delta \omega_{k+1} + C_1\omega_{k+1}^2 )	\mathbb{E} ||E_k||^2 +  C_1\omega_{k+1}^2 + 2 C_2 \triangle_k \omega_{k+1}   + 2\omega_{k+1 }	\mathbb{E} [z_k - z_{k+1}]\rangle
		 \end{equation*}
		 where we use the fact that $|| \tilde{H}(\boldsymbol{\beta_{k+1}}, \boldsymbol \theta_k) ||^2 \leq C_1 (1+||E_k||^2 )$.
		 
		 According to Lemma \ref{lemma:sequence}, there exists $\lambda_0$, $k_0$ such that
		 \begin{equation*}
		 \mathbb{E} ||E_{k_0}||^2 \leq \psi_{k_0} = \lambda_0 \omega_{k_0} + 2C_2/\delta \sup_{i\geq k_0}  \triangle_i
		 \end{equation*}
		 Thus,
		 \begin{equation} \label{eq:thm_Ekbd}
		 \mathbb{E} ||E_{k}||^2 \leq \psi_{k} +  \mathbb{E}[\sum_{j=k_0+1}^k \Lambda_j^k (z_{j+1} - z_j)] 
		 \end{equation}
		 From Assumption \ref{assumption:poisson} and that $\boldsymbol \theta$ is uniformly bounded, there exists $C_3>0$
		  \begin{equation*}
		 \mathbb{E}[|z_k|]  = \mathbb{E}\left[\left|  \langle E_k, \Pi_{\boldsymbol \theta_{k-1}} \mu_{\boldsymbol \theta_{k-1}}(\boldsymbol \beta_{k}) \rangle  \right| \right] \leq  \mathbb{E} ||E_k||  \mathbb{E}\left[\left|  \Pi_{\boldsymbol \theta_{k-1}} \mu_{\boldsymbol \theta_{k-1}}(\boldsymbol \beta_{k})   \right| \right] \leq C_3
		 \end{equation*}
		 
		 Moreover, due to the fact that $k_0$ is an integer satisfying 
	\begin{equation*}
	\inf_{k\geq k_0} \frac{\omega_{k+1} - \omega_k}{\omega_k\omega_{k+1} } + 2\delta -C_1\omega_{k+1}  > 0.
	\end{equation*}
	Then $\forall k \geq k_0$, the sequence $\{ \Lambda_k^K\}_{k=k_0}^{K}$ is increasing, where
	\begin{equation*}
	 \Lambda_k^K = 	\begin{cases}
	 \displaystyle{ 2\omega_k \prod_{j=k}^{K-1} (1-2\omega_{k+1}\delta + C_1\omega_{k+1}^2)}, & \text{if } \displaystyle{  k < K} \\
	 2\omega_k, & \text{if } \displaystyle{  k = K} 
	 \end{cases} 
	\end{equation*}
		 Thus, 
		 \begin{align*}
		   \mathbb{E}\left[ \left| \sum_{j=k_0+1}^k \Lambda_j^k (z_{j+1} - z_j)  \right| \right] &=  \mathbb{E}\left[ \left| \sum_{j=k_0+1}^{k-1}  ( \Lambda_{j+1}^k - \Lambda_j^k) z_j +\Lambda_{k_0+1}^k  z_{k_0} -\Lambda_k^k  z_k \right| \right]  \\
		   &\leq  ( \Lambda_{k}^k - \Lambda_{k_0+1}^k) C_3 + \Lambda_{k_0+1}^k C_3 + \Lambda_k^k C_3 \\
		  & = 2\Lambda_k^k C_3  \leq 4 C_3 \omega_k
		 \end{align*}
		 
Then the inequality \eqref{eq:thm_Ekbd} can be further bounded as
		 \begin{align*}
\mathbb{E} ||E_{k}||^2& \leq \lambda_0 \omega_k + C_2 \sup_{i \geq k_0}  \triangle_i + 4C_3  \omega_{k} \\
&= \lambda \omega_k + C_2 \sup_{i \geq k_0}  \triangle_i
\end{align*}
where $\lambda = \lambda_0 + 4C_3$.
	\end{proof}

\end{appendices}

\end{document}